\patchcmd{\@setaddresses}{\indent}{\noindent}{}{}
\patchcmd{\@setaddresses}{\indent}{\noindent}{}{}
\patchcmd{\@setaddresses}{\indent}{\noindent}{}{}
\patchcmd{\@setaddresses}{\indent}{\noindent}{}{}
\title[The non-commuting, non-generating graph of a finite simple group]{The non-commuting, non-generating graph\\of a finite simple group}
\author{Saul D. Freedman}
\subjclass[2020]{Primary: 20E32; Secondary: 20D60, 05C25}
\thanks{\textit{Keywords}: simple groups; non-commuting non-generating graph; non-generating graph; generating graph; graphs defined on groups}
\address{\parbox{\linewidth}{
School of Mathematics and Statistics, University of St Andrews, St Andrews, KY16 9SS, UK\\
\textit{Current address}: School of Mathematics, Monash University, Clayton, VIC 3800, Australia}\vspace{.2cm}}
\email{saul.freedman1@monash.edu}
\newtheorem{thm}{Theorem}[section]
\newtheorem*{thm*}{Theorem}
\newtheorem*{conj*}{Conjecture}
\newtheorem{cor}[thm]{Corollary}
\newtheorem{lem}[thm]{Lemma}
\newtheorem{prop}[thm]{Proposition}
\newtheorem*{rem*}{Remark}
\theoremstyle{remark}
\theoremstyle{definition}
\newtheorem{rem}[thm]{Remark}
\newtheorem{defn}[thm]{Definition}
\newcounter{claim}[thm]
\numberwithin{equation}{section}
\newcommand{\slteq}{%
  \mathrel{\mathpalette\sl@unlhd\relax}%
}
\newcommand{\sl@unlhd}[2]{%
  \sbox\z@{$#1\lhd$}%
  \sbox\tw@{$#1\leqslant$}%
  \dimen@=\ht\tw@
  \advance\dimen@-\ht\z@
  \ifx#1\displaystyle
    \advance\dimen@ .2pt
  \else
    \ifx#1\textstyle
      \advance\dimen@ .2pt
    \fi
  \fi
  \ooalign{\raisebox{\dimen@}{$\m@th#1\lhd$}\cr$\m@th#1\leqslant$\cr}%
}
\renewcommand{\le}{\leqslant}
\renewcommand{\ge}{\geqslant}
\renewcommand{\trianglelefteq}{\slteq}
\newcommand\diam{\mathrm{diam}}
\newcommand\sd{\mkern1.5mu{:}\mkern1.5mu}
\newcommand{\nonsplit}[2]{#1\raisebox{0.6ex}{$\cdot$} #2}
\newcommand\nc{\Xi}
\newcommand\nd{\Xi^+}
\newcommand\nongen{\overline{\Gamma}(G)}
\newcommand\cntr{\mathcal{Z}}
\begin{document}

\onehalfspacing

\begin{abstract}
Let $G$ be a group such that $G/Z(G)$ is finite and simple. The \emph{non-commuting, non-generating graph} $\nc(G)$ of $G$ has vertex set $G \setminus Z(G)$, with edges corresponding to pairs of elements that do not commute and do not generate $G$. Complementing our previous investigation of this graph for non-simple groups, we show that $\nc(G)$ is connected with diameter at most $5$, with smaller upper bounds for certain families of groups. Using these bounds, we then prove that when $G$ is simple, the diameter of the complement of the generating graph of $G$ has a tight upper bound of $4$, with the exception of at most one group with a graph of diameter $5$.
\end{abstract}

\maketitle

\section{Introduction}
\label{sec:intro}

The generating graph of a group $G$ has been investigated by many authors (e.g., \cite{bgk,burnessspread,crestani}), and encodes information about the $2$-generation properties of $G$: its vertex set is $G \setminus \{1\}$ and its edges are the generating pairs for $G$. The complement of this graph has also been studied (e.g., in \cite{lucchininongen}), and for $G$ non-abelian, its edge set necessarily includes all pairs of commuting elements. The remaining (and more interesting) non-generating pairs are described by the \emph{non-commuting, non-generating graph} $\nc(G)$ of $G$, with vertex set $G \setminus Z(G)$ and $\{x,y\}$ an edge if and only if $[x,y] \ne 1$ and $\langle x, y \rangle \ne G$.

Since its introduction in \cite{nilppaper}, the study of the graph $\nc(G)$ has been motivated by the importance of the $2$-generation properties of groups throughout (abstract and computational) group theory and its applications, and by Cameron's \cite[\S2.6]{camerongraphs} \emph{hierarchy of graphs}. This is a sequence of graphs defined on $G$ (for convenience, we will take the vertex set of each to be $G \setminus \{1\}$) such that, as long $G$ is non-abelian, each is a spanning subgraph of the next graph in the sequence. The final three of these graphs are the commuting graph (whose edges are the commuting pairs of elements of $G$), the non-generating graph (i.e., the complement of the generating graph), and the complete graph. The well-studied generating graph is the difference between the final two graphs. It is therefore natural to investigate the next difference, between the non-generating graph and the commuting graph, with all vertices corresponding to central elements deleted (these would otherwise necessarily be isolated). This difference is precisely $\nc(G)$.

It was proved in \cite{nilppaper} that if $G$ is nilpotent (or more generally, if all maximal subgroups of $G$ are normal) and $\nc(G)$ has an edge, then either the subgraph $\nd(G)$ induced by the non-isolated vertices of $\nc(G)$ is connected with diameter $2$, or $\nc(G)$ itself is connected with diameter $3$. In our companion paper \cite{nonsimplepaper}, we extend these results to the general case where $G/Z(G)$ is not simple. The only additional possibilities here are that $G$ is infinite and $\nd(G)$ has diameter $3$ or $4$, or that $\nc(G)$ is the union of two connected components of diameter $2$ (we provide in that paper a detailed structural description of the finite groups $G$ satisfying this case).

In this paper, we address the case where $G/Z(G)$ is finite and (non-abelian) simple. Our main theorem assumes that $G$ itself is simple. Here, $\diam(\nc(G))$ denotes the diameter of $\nc(G)$, which is always at least $2$, since no vertex is adjacent to any of its powers. Compared to Theorem 6.1.4 from the thesis \cite{saulthesis} on which this paper is based, our theorem takes into account new information on the maximal subgroups of the monster group $\mathbb{M}$; see Remark~\ref{rem:monsteroddmax}.

\begin{thm}
\label{thm:ncsimple}
Let $G$ be a non-abelian finite simple group.
\begin{enumerate}[label={(\roman*)},font=\upshape]
\item \label{ncsimple1} $\nc(G)$ is connected with diameter at most $5$.
\item \label{ncsimple2} If $G = \mathbb{M}$, then $\diam(\nc(G)) \in \{4,5\}$. If instead $G$ is any other sporadic simple group or the Tits group, then $\diam(\nc(G)) \le 4$.
\item \label{ncsimple3} For certain simple groups $G$, Table~\ref{table:simplediams} lists exact values or upper bounds for $\diam(\nc(G))$.
\end{enumerate}
\end{thm}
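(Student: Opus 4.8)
The plan is to combine a general argument covering all finite simple groups (part~\ref{ncsimple1}) with explicit case analysis for the families singled out in parts~\ref{ncsimple2} and~\ref{ncsimple3}. For part~\ref{ncsimple1}, the core observation is that for a non-abelian simple group $G$, two elements $x,y$ fail to generate $G$ precisely when $\langle x,y\rangle$ lies in some proper (hence core-free) subgroup $M$ of $G$; so if $x$ and $y$ also fail to commute, they are adjacent in $\nc(G)$. Thus any non-cyclic proper subgroup $M$ with $M \not\le Z(G) = 1$ contributes a connected "local" piece: if $M$ is itself non-abelian, then the non-commuting graph on $M \setminus \{1\}$ — which is connected with small diameter by classical results on the commuting/non-commuting graph of a non-abelian group — embeds isometrically-up-to-a-constant into $\nc(G)$. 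The strategy is therefore to pick, for a given vertex $x$, a suitable proper subgroup $M_x$ containing $x$ (e.g.\ a maximal subgroup, or a Sylow normalizer, or a subgroup of the form $C_G(t)$ for an involution $t$) in which $x$ is non-central, reach a "hub" element inside $M_x$ in boundedly many steps, and then connect hubs of different vertices to each other. The diameter bound of $5$ should come from: $2$ steps to get from $x$ to a well-chosen element $a$ (an involution, or an element of a fixed conjugacy class) inside $M_x$, at most $1$ step between two such well-chosen elements lying in a common proper subgroup, and symmetrically $2$ steps back — with the parity/overlap bookkeeping trimming $6$ to $5$. A clean way to organise this is to fix a single conjugacy class $\mathcal{C}$ (say of involutions, using that every non-abelian finite simple group has even order) and show every vertex is within distance $2$ of $\mathcal{C}$, while any two elements of $\mathcal{C}$ are within distance $1$ or $2$ of each other via a common proper subgroup — the latter using that $\langle s,t\rangle$ for two involutions is dihedral, hence proper unless $G$ is dihedral, which it is not.

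The main obstacle, and where the bulk of the work lies, is the step "every vertex $x$ is within distance $2$ of the chosen class $\mathcal{C}$." The difficulty is genuine: there may be elements $x$ such that $x$ together with \emph{every} involution generates $G$ (this is exactly the phenomenon measured by the spread and the generating graph), so one cannot in general reach $\mathcal{C}$ in a single step. The remedy is to first move $x$ to an auxiliary vertex $z$ — typically an element that powers into $x$ or that centralizes a large subgroup — chosen so that $z$ is non-central, $\langle x, z \rangle \ne G$ (e.g.\ because they commute only if... — no: one needs $[x,z]\ne 1$, so $z$ is taken non-commuting with $x$ but inside $C_G(x^k)$ for an appropriate power, or inside a proper overgroup of $\langle x\rangle$), and $z$ in turn is adjacent to some member of $\mathcal{C}$. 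Establishing that such a $z$ always exists will require invoking structural facts about maximal subgroups of simple groups — the classification is used here, at least through results on the existence of "large" proper subgroups and on non-cyclic maximal subgroups — and will likely split into cases: alternating, classical, exceptional, sporadic. For the exceptional and classical groups of large rank the argument should be uniform (centralizers of involutions and maximal parabolic / reductive subgroups are non-abelian and contain plenty of commuting pairs); the finitely many small-rank and sporadic exceptions are where one expects to do hands-on computation, which feeds directly into parts~\ref{ncsimple2} and~\ref{ncsimple3}.

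For part~\ref{ncsimple2}, the plan is a finite computation: for each sporadic group and the Tits group, use the known list of maximal subgroups (from the \textsf{ATLAS} and subsequent literature) to verify that every non-identity element lies in a non-abelian proper subgroup and, more sharply, that every element is within distance $2$ of a fixed class of involutions while any two such involutions are at distance at most $2$ — giving the improved bound $4$. Where character-table or permutation-representation data suffices this is checked with \textsf{GAP} or \textsf{Magma}; the sharper bound $4$ rather than $5$ for these groups presumably comes from the absence of the worst-case parity configuration, i.e.\ one can always route through a \emph{single} intermediate proper subgroup rather than two. For part~\ref{ncsimple3}, the plan is likewise direct: for each group $G$ in Table~\ref{table:simplediams}, exhibit explicit vertices realising the claimed lower bound (a pair $x,y$ at the stated distance, verified by checking that no short path exists — feasible since the relevant groups are small or have restricted subgroup structure), and match it with the upper bound from part~\ref{ncsimple1} or a sharpened version of the argument above; for the smallest groups (e.g.\ $A_5$, $\mathrm{PSL}_2(7)$, $A_6$) the entire graph $\nc(G)$ can be constructed explicitly and its diameter read off.
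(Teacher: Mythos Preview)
Your high-level strategy for part~(i) is essentially the paper's: route through involutions, use that two non-commuting involutions generate a dihedral (hence proper) subgroup, and use that inside any non-abelian proper subgroup the induced non-commuting graph has diameter~$2$ (Lemma~\ref{lem:propernc}). The paper packages this as Lemma~\ref{lem:ncmaxsimple}: if $x,y$ are non-central in maximal subgroups $L,M$ of even order, then $d(x,y)\le 5$ via $x \leadsto (\text{involution in }L) \sim (\text{involution in }M) \leadsto y$, with some care (Lemma~\ref{lem:simplenoncent}) to ensure the two involutions can be chosen non-commuting and non-central.

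The genuine gap is precisely the step you flag as the main obstacle. Your proposed remedy --- an auxiliary vertex $z$ ``inside $C_G(x^k)$ for an appropriate power, or inside a proper overgroup of $\langle x\rangle$'' --- is too vague to be a plan, and no uniform such construction works. The organising insight you are missing is that the obstruction occurs \emph{exactly} when $x$ lies only in maximal subgroups of odd order, and the simple groups possessing any odd-order maximal subgroup are classified by Liebeck--Saxl (Theorem~\ref{thm:simpleoddmax}): certain $A_p$, $\mathrm{PSL}(n,q)$ and $\mathrm{PSU}(n,q)$ with $n$ prime, and $\mathrm{M}_{23}$, $\mathrm{Th}$, $\mathbb{B}$. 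This splits the proof cleanly: Proposition~\ref{prop:ncsimpleeven} (your involution argument) disposes of every group whose maximal subgroups all have even order, and the listed families are handled individually by genuinely different methods --- Singer cycles and subspace-stabiliser calculations for the linear and unitary cases, character-table work for $\mathbb{B}$ and $\mathrm{Th}$, orbit arguments for $A_p$. None of these is a routine auxiliary-vertex trick.

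For parts~(ii) and~(iii) your plan is too thin. The upper bounds in Table~\ref{table:simplediams} for $A_n$, $\mathrm{PSL}(n,q)$, $\mathrm{Sz}(q)$ and the odd-characteristic exceptional groups are not byproducts of the general argument with parity trimmed; each needs its own structural input (for the exceptional groups, an analysis of involution centralisers via a lemma on central products $\mathrm{SL}(2,q)\circ H$; for $\mathbb{B}$, a delicate treatment of elements of orders $23$, $25$, $47$, $55$). The exact diameters come partly from Magma computation and partly from lower bounds transferred from the intersection graph via Lemma~\ref{lem:intncgraphs}. ``A sharpened version of the argument above'' does not cover this.
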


\begin{table}[ht]
\centering
\renewcommand{\arraystretch}{1.1}
\caption{Exact values or upper bounds for $\diam(\nc(G))$, for certain simple groups $G$.}
\label{table:simplediams}
\begin{tabular}{ ccc }
\hline
$G$ & Conditions & $\diam(\nc(G))$\\
\hline
$\mathrm{M}_{11}, \mathrm{M}_{12}, \mathrm{M}_{22}, \mathrm{J}_{2}$ & & $2$\\
$\mathrm{PSL}(2,q)$ & $q$ even or $q \le 9$ & $2$\\
$\mathrm{M}_{23}, \mathrm{J_1}$ & & $3$\\
$\mathrm{PSL}(2,q)$ & $q$ odd and $q \ge 11$ & $3$\\
$\mathbb{B}, \mathrm{PSU}(7,2)$ & & $4$\\
$A_n$ & $n$ even & $\le 3$\\
$A_n$ & $n$ odd & $\le 4$\\
$\mathrm{PSL}(n,q), \mathrm{Sz}(q)$ & & $\le 4$\\
$G_2(q), {}^2G_2(q), {}^3D_4(q), F_4(q), E_8(q)$ & $q$ odd & $\le 4$\\
\hline
\end{tabular}
\end{table}

\begin{rem}
\label{rem:simplediams}
Table~\ref{table:simplediams} shows that there exist non-abelian finite simple groups $G$ with $\diam(\nc(G))$ equal to $2$, $3$ or $4$. We note that the baby monster group $\mathbb{B}$, the monster group $\mathbb{M}$ and $\mathrm{PSU}(7,2)$ are the only known simple groups $G$ for which $\diam(\nc(G)) > 3$. Indeed, we do not know if infinitely many such groups exist, nor if $\diam(\nc(G)) = 5$ is possible.
\end{rem}

At the end of each of Sections~\ref{subsec:altnc}, \ref{subsec:exceptionalnc}, \ref{subsec:nclin} and \ref{subsec:unitarync}, we provide further examples of diameters achieved by various simple groups, computed using Magma \cite{magma} (see \S\ref{subsec:prelims} for a discussion of our computational methods). Notably, none of our examples meet the upper bounds in rows 6--9 of Table~\ref{table:simplediams}.

Continuing the comparison between $\nc(G)$ and the generating graph of $G$ from \cite{nilppaper,nonsimplepaper}, we note that in the non-abelian finite simple case, the diameter of the former graph may be larger than that of the latter, which is equal to $2$ by \cite[Theorem 1.2]{bgk}.

\begin{rem}
It is easy to determine the connectedness and diameter of the non-commuting, non-generating graphs of certain infinite simple groups $G$. For instance, if $G$ is a \emph{Tarski monster}, then each nontrivial proper subgroup of $G$ has order a fixed prime, and so each vertex of $\nc(G)$ is isolated. Additionally, it is a straightforward consequence of \cite[Proposition 2.1]{abdollahi} that $\diam(\nc(G)) = 2$ whenever $G$ is not $2$-generated. However, it is an open problem to investigate the connectedness and diameter of $\nc(G)$ when the infinite simple group $G$ is $2$-generated and $\nc(G)$ has an edge.
\end{rem}

\begin{rem}
Our proof of Theorem~\ref{thm:ncsimple} involves Proposition~\ref{prop:intmonst}, which corrects \cite[Theorem 1.1]{intgraph} on the \emph{intersection graph} (defined in Section~\ref{subsec:prelims}), in the case of the monster group. Additionally, our proof uses Theorem~\ref{thm:unisubspaces}, on subspace stabilisers in simple unitary groups. In \cite[Ch.~3]{saulthesis}, we prove more general versions of that theorem, to obtain lower bounds for the base sizes of primitive subspace actions of almost simple classical groups.
\end{rem}

We also note that our proof of Theorem~\ref{thm:ncsimple}\ref{ncsimple1} relies on the classification of finite simple groups. In \S\ref{subsec:maxsubs}, we prove the following special case that does not depend on the classification.

\begin{prop}
\label{prop:ncsimpleeven}
Let $G$ be a non-abelian finite simple group, and suppose that every maximal subgroup of $G$ has even order. Then $\nc(G)$ is connected with diameter at most $5$.
\end{prop}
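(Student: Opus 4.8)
The plan is to reduce the statement to two claims: \textbf{(1)} the subgraph of $\nc(G)$ induced on the set of involutions of $G$ has diameter at most $3$; and \textbf{(2)} every vertex of $\nc(G)$ is adjacent to some involution. Writing $d$ for the distance function of $\nc(G)$, these give $d(x,y) \le 1+3+1 = 5$ for all vertices $x,y$ (taking $x$, respectively $y$, itself when it is an involution), while \textbf{(2)} rules out isolated vertices; hence $\nc(G)$ is connected with diameter at most $5$. Two preliminary observations are used throughout: the hypothesis forces $|G|$ to be even, so $G$ has involutions, and these generate a nontrivial normal subgroup, hence all of $G$; and if $s,t$ are involutions with $[s,t] \ne 1$, then $\langle s,t\rangle$ is dihedral and so a proper subgroup of the non-abelian simple group $G$, whence $\{s,t\}$ is an edge of $\nc(G)$. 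Thus on the induced subgraph on involutions, adjacency coincides with non-commuting.

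For claim \textbf{(1)}, the key lemma is that $J_s := \{t : t \text{ an involution of } G,\ [s,t]\ne 1\}$ generates $G$, for every involution $s$. First, $J_s \ne \emptyset$: otherwise $s$ would commute with every involution, hence with $\langle\,\text{involutions of } G\,\rangle = G$, contradicting $Z(G)=1$. Conjugation by an element of $C_G(s)$ permutes $J_s$, so $C_G(s)$ normalises $\langle J_s\rangle$; as every involution of $G$ lies in $C_G(s)\cup J_s$, the normaliser $N_G(\langle J_s\rangle)$ contains every involution and therefore equals $G$. So $\langle J_s\rangle$ is a nontrivial normal subgroup of $G$, i.e.\ $\langle J_s\rangle = G$. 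Now given involutions $s,t$, we may assume $[s,t]=1$ (else $d(s,t)\le 1$). Since $\langle J_s\rangle = G = \langle J_t\rangle$ and $G$ is non-abelian, we can pick $u \in J_s$ and $v \in J_t$ with $[u,v]\ne 1$; then $s,u,v,t$ is a walk in $\nc(G)$, and inspecting the possible coincidences among $s,u,v,t$ (for instance $s=v$ or $u=t$ forces $[s,t]\ne 1$) shows $d(s,t)\le 3$ in every case.

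The bulk of the work is claim \textbf{(2)}, which is where I expect the main obstacle to lie. Let $x$ be a non-central element. As $G$ is not cyclic, $\langle x\rangle$ lies in a maximal subgroup $M$; by hypothesis $|M|$ is even, so $M$ contains an involution $t$, and $\langle x,t\rangle \le M \ne G$, so $x$ is adjacent to $t$ unless $[x,t]=1$. It thus remains to exclude a \emph{bad} element $x$: one such that every involution in every maximal subgroup containing $x$ centralises $x$ --- equivalently, every involution $t$ with $[x,t]\ne 1$ satisfies $\langle x,t\rangle = G$. For such $x$, first $C_G(x)$ has even order (otherwise a maximal overgroup of $\langle x\rangle$ would contain an involution centralising $x$ only if $2 \mid |C_G(x)|$, yet it must contain an involution, contradicting badness), so $C_G(x)$ contains an involution $u$; as $C_G(u)$ is a proper subgroup containing $x$, every involution of $C_G(u)$ centralises $x$ (it would otherwise generate $G$ together with $x$ inside $C_G(u)$). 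But not every involution of $G$ centralises $x$, since $x \notin Z(G)$; hence there is an involution $w \notin C_G(u)$ with $[w,x]\ne 1$, so that $[w,u]\ne 1$ and, by badness, $\langle x,w\rangle = G$. Deriving a contradiction from this configuration --- in which $x$ centralises the involution $u$, the involution $w$ commutes with neither $x$ nor $u$, and $G = \langle x,w\rangle$ --- is the heart of the proof; I would attempt it by analysing the dihedral subgroup $\langle u,w\rangle$ together with the action of $\langle x\rangle$ on a suitable $2$-local subgroup, or by invoking structural results on centralisers of involutions (or a Baer--Suzuki-type argument). Everything else above is routine.
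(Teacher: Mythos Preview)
Your decomposition into claims \textbf{(1)} and \textbf{(2)} is natural, and your proof of \textbf{(1)} is correct and elegant. The gap is \textbf{(2)}: you set up the ``bad element'' configuration and then say you ``would attempt'' a contradiction via $2$-local analysis or a Baer--Suzuki argument, without carrying it out. This is not a minor omission. Your attack would need, in effect, that any element centralising every involution of $C_G(u)$ lies in $Z(C_G(u))$; the paper establishes statements of exactly this shape (Lemma~\ref{lem:excepinvolodd}, Proposition~\ref{prop:sporinvol}) only for specific families and only by case analysis or computation, which strongly suggests there is no short uniform argument. Equivalently, your configuration forces the set of involutions commuting with $x$ to be a nonempty proper union of connected components of the commuting-involution graph, and ruling this out for arbitrary simple $G$ is not elementary.

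The paper avoids this difficulty entirely. Instead of distance $1$ to an involution, it uses Lemma~\ref{lem:propernc} --- the induced subgraph of $\nc(G)$ on $H \setminus Z(H)$, for any proper non-abelian $H$, has diameter $2$ --- together with Proposition~\ref{prop:isolvertnc} to place each vertex $x$ as a non-central element of a maximal subgroup $L$ of even order. Lemma~\ref{lem:simplenoncent} (whose main input is Glauberman's $Z^*$-theorem) then supplies a non-central involution $c \in L$, so $d(x,c) \le 2$, and an involution $b \in M \setminus Z(M)$ with $c \sim b$; one more application of Lemma~\ref{lem:propernc} inside $M$ gives $d(b,y) \le 2$. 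The resulting bound $d(x,y) \le 5$ (Lemma~\ref{lem:ncmaxsimple}) never requires $x$ itself to be adjacent to an involution. If you try to salvage your scheme by weakening \textbf{(2)} to ``distance $\le 2$ from an involution'' via these same lemmas, then \textbf{(1)} with its bound of $3$ only yields diameter $\le 7$; recovering $5$ forces you back to the paper's more careful pairing of involutions across the two maximal subgroups.
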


In fact, it follows from the classification (see Theorem~\ref{thm:simpleoddmax} and Remark~\ref{rem:monsteroddmax} below) that in addition to a few sporadic groups, the finite simple groups containing maximal subgroups of odd order belong to only three infinite families. Thus the above proposition serves as a useful reduction for the proof of Theorem~\ref{thm:ncsimple}\ref{ncsimple1}.

Now, if $G/Z(G)$ is simple, then $Z(G)x$ is a vertex of $\nc(G/Z(G))$ whenever $x \in G \setminus Z(G)$. Thus \cite[Lemma 2.9(ii)]{nonsimplepaper} yields the following corollary of Theorem~\ref{thm:ncsimple}\ref{ncsimple1}.

\begin{cor}
\label{cor:simplecentext}
Let $G$ be a group such that $G/Z(G)$ is finite and simple. Then $\nc(G)$ is connected, with $\diam(\nc(G)) \le \diam(\nc(G/Z(G))) \le 5$.
\end{cor}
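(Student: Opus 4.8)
The plan is to deduce the statement from Theorem~\ref{thm:ncsimple}\ref{ncsimple1} applied to the quotient $H := G/Z(G)$, combined with the general central-quotient lemma \cite[Lemma 2.9(ii)]{nonsimplepaper} that is already cited in the paragraph preceding the corollary. First I would note that $H$ is not merely finite and simple but necessarily \emph{non-abelian}: were $H$ abelian it would be cyclic, forcing $G$ to be abelian and hence $G = Z(G)$, contradicting the nontriviality of the simple group $H$. Thus Theorem~\ref{thm:ncsimple}\ref{ncsimple1} applies to $H$ and gives that $\nc(H)$ is connected with $\diam(\nc(H)) \le 5$. It remains only to transfer connectedness and this diameter bound from $\nc(H)$ up to $\nc(G)$ along the natural projection $\pi \colon G \to H$.

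For the transfer I would invoke \cite[Lemma 2.9(ii)]{nonsimplepaper}, whose hypothesis is that $\pi$ sends every vertex of $\nc(G)$ to a vertex of $\nc(H)$. This holds here: for $x \in G \setminus Z(G)$ the image $\pi(x) = Z(G)x$ is a non-identity element of $H$, and since $H$ is non-abelian simple it is centreless, so $\pi(x)$ is a vertex of $\nc(H)$. The lemma then yields that $\nc(G)$ is connected with $\diam(\nc(G)) \le \diam(\nc(H)) \le 5$. (The mechanism behind that lemma, should one wish to reprove it inline, is the usual lifting argument: given vertices $x, y$ of $\nc(G)$ with $\pi(x) \ne \pi(y)$, lift a geodesic of $\nc(H)$ from $\pi(x)$ to $\pi(y)$ to a walk $x = z_0, z_1, \dots, z_k = y$ in $G$ using arbitrary coset representatives for the interior vertices; each $z_i$ lies outside $Z(G)$ since $\pi(z_i) \ne 1$, and consecutive terms satisfy $[z_i, z_{i+1}] \notin Z(G)$ and $\langle z_i, z_{i+1}\rangle Z(G) \ne G$, hence are adjacent in $\nc(G)$. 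In the remaining case $\pi(x) = \pi(y)$, where $x$ and $y$ commute and so are non-adjacent, one routes through any $w \in G$ with $\pi(w)$ adjacent to $\pi(x)$ in $\nc(H)$; such $w$ exists as $\nc(H)$ has no isolated vertices, and $w$ is adjacent to both $x$ and $y$, giving a path of length $2 \le \diam(\nc(H))$.)

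The corollary carries essentially no difficulty of its own: the substance sits entirely in Theorem~\ref{thm:ncsimple} and in \cite[Lemma 2.9]{nonsimplepaper}. The only points that need any care are the observation that a finite simple $G/Z(G)$ is automatically non-abelian, so that Theorem~\ref{thm:ncsimple} is applicable, and the treatment of pairs of vertices lying in a common $Z(G)$-coset; neither is a genuine obstacle.
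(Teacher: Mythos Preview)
Your proposal is correct and follows exactly the paper's route: verify that each vertex of $\nc(G)$ maps to a vertex of $\nc(G/Z(G))$, invoke \cite[Lemma 2.9(ii)]{nonsimplepaper}, and then apply Theorem~\ref{thm:ncsimple}\ref{ncsimple1} to the simple quotient. You have merely fleshed out what the paper leaves implicit (the non-abelianness of $G/Z(G)$ and the lifting mechanism behind the cited lemma), but the argument is the same.
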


If instead $G/Z(G)$ is finite and almost simple but not simple, then we observe from \cite[Theorems 1.2--1.3 \& Lemma 6.5]{nonsimplepaper} that $\nc(G)$ is connected with diameter $2$ or $3$. Magma computations show that $\diam(\nc(S_5)) = 2$ and $\diam(\nc(S_6)) = 3$, and so both possibilities occur.

Finally, as a consequence of Theorem~\ref{thm:ncsimple}, we will prove the following analogue of that theorem for the non-generating graph $\nongen$ of a non-abelian finite simple group $G$. Recall that this graph is the complement of the generating graph of $G$. Hence its vertex set is $G \setminus \{1\}$, and vertices $x$ and $y$ are adjacent if and only if $\langle x, y \rangle \ne G$. Note that $\nc(G)$ is a spanning subgraph of $\nongen$ (since $Z(G) = 1$). As above, we write $\mathbb{B}$ to denote the baby monster group and $\mathbb{M}$ to denote the monster group. Similarly to Theorem~\ref{thm:ncsimple}, we incorporate new information on the maximal subgroups of $\mathbb{M}$ (compared to \cite[Theorem 6.5.4]{saulthesis}) and its intersection graph.

\begin{thm}
\label{thm:nongensimple}
Let $G$ be a non-abelian finite simple group. The non-generating graph $\nongen$ is connected with diameter $4$ or $5$ if $G = \mathbb{M}$, at most $4$ if $G \not\cong \mathbb{M}$, and at most $3$ if every maximal subgroup of $G$ has even order. Furthermore, if $G \in \{\mathbb{B}, \mathrm{PSU}(7,2)\}$, then $\diam(\nongen) = 4$.
\end{thm}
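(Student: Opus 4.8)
The plan is to exploit the fact that $\nc(G)$ is a spanning subgraph of $\nongen$, together with the many \emph{commuting} non-generating pairs that $\nongen$ records but $\nc(G)$ does not. As $\nc(G)$ is a connected spanning subgraph by Theorem~\ref{thm:ncsimple}\ref{ncsimple1}, the graph $\nongen$ is connected with $\diam(\nongen)\le\diam(\nc(G))\le5$; the task is to improve $5$ to $4$ in general, and to $3$ under the extra hypothesis. Two simple observations carry most of the argument. First, every proper subgroup $H<G$ induces a clique in $\nongen$, since $\langle a,b\rangle\le H\ne G$ for all $a,b\in H\setminus\{1\}$, whether or not $a$ and $b$ commute. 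Second, applying this to dihedral subgroups, the set $\mathcal{T}$ of involutions of $G$ (which is non-empty, as $G$ has even order) is a clique in $\nongen$: any two involutions generate a dihedral, hence proper, subgroup. Writing $d$ for the largest $\nongen$-distance from a vertex to $\mathcal{T}$, it follows that $\diam(\nongen)\le 2d+1$.

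When every maximal subgroup of $G$ has even order, I would show $d=1$, giving $\diam(\nongen)\le3$: every vertex $x$ lies in a maximal subgroup $M$ (as $\langle x\rangle$ is proper), which by hypothesis contains an involution $t$, and then $x$ is adjacent to $t$, or equal to it, because $\langle x,t\rangle\le M\ne G$. For vertices $x,y$ this produces the walk $x,t_x,t_y,y$ of length at most $3$, using the clique property of $\mathcal{T}$ for the middle edge. Like Proposition~\ref{prop:ncsimpleeven}, this part avoids the classification of finite simple groups.

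For general $G$ the obstruction is a vertex $x$ lying in no even-order maximal subgroup; equivalently, $x$ has no involution --- indeed no element of even order --- among its $\nongen$-neighbours. Call such $x$ \emph{exceptional}; note that exceptional vertices have odd order. Using the classification (which rests on that of the finite simple groups) of the simple groups admitting a maximal subgroup of odd order, together with the structure of those subgroups --- solvable, and in the cases relevant here Frobenius groups $N_G(P)=P\sd K$ with $P$ a Sylow subgroup and $K$ of odd order --- I would argue in two steps. First, an exceptional $x$ lies in such an $M=N_G(P)$, and one can choose a \emph{non-exceptional} element $a\in M$ (a suitable element of the complement $K$, whose $G$-normaliser has even order), so that $x$ is at $\nongen$-distance $2$ from $\mathcal{T}$ via $x,a,t$. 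Hence $d\le2$, and any two vertices of which at least one is non-exceptional lie at $\nongen$-distance at most $2+1+1=4$. Second, for a pair of exceptional vertices $x,y$, choosing non-exceptional elements $a\in M_x$ and $b\in M_y$, one joins $a$ to $b$ by a path of length at most $2$ in $\nongen$ --- either directly, when $\langle a,b\rangle\ne G$ (this occurs, for instance, in $A_n$, where $a$ and $b$ can be forced to fix a common point), or through a common $\nongen$-neighbour --- so that $x$ and $y$ lie at $\nongen$-distance at most $1+2+1=4$. Thus $\diam(\nongen)\le4$ for every $G$. For $G=\mathbb{B}$ (with odd-order maximal subgroup $47\sd23$, the normaliser of a Sylow $47$-subgroup) and $G=\mathrm{PSU}(7,2)$ (with odd-order maximal subgroup $43\sd7$, the normaliser of a Sylow $43$-subgroup), I would finish the argument, and verify sharpness, by direct computation in Magma \cite{magma}: one takes elements $x,y$ of order $47$, resp.\ $43$, with $\langle x,y\rangle=G$, each lying in no even-order maximal subgroup and in a unique maximal subgroup --- the relevant Frobenius group --- with these two maximal subgroups intersecting trivially, and checks that $x$ and $y$ have neither a common $\nongen$-neighbour nor a joining path of length $3$; hence their $\nongen$-distance equals $4$.

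The main obstacle is controlling $\nongen$-distances between pairs of exceptional vertices: each such vertex is within distance $2$ of the clique $\mathcal{T}$, but the crude estimate for a pair of them is then $2+1+2=5$, and cutting this to $4$ demands a careful, case-dependent choice of the connecting non-exceptional elements and of the intermediate vertex, with $\mathbb{B}$ and $\mathrm{PSU}(7,2)$ requiring an explicit computation. The lower bound $\diam(\nongen)=4$ for those two groups is likewise essentially computational: since $\nongen$ has strictly more edges than $\nc(G)$, a pair witnessing $\diam(\nc(G))=4$ need not remain at distance $4$ in $\nongen$, so one must separately produce a pair admitting no length-$3$ path in $\nongen$.
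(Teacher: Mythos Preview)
Your strategy for the diameter-$3$ bound under the even-order hypothesis matches the paper's exactly, and your overall architecture for the general bound---route each exceptional vertex through a non-exceptional element of its odd-order maximal subgroup, then connect those bridges---is also how the paper proceeds. However, there are two concrete gaps.

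First, the paper takes a shortcut you miss: for every $G$ that is \emph{not} unitary of odd prime dimension, Theorems~\ref{thm:simpleoddmax} and~\ref{thm:ncsimple} already give $\diam(\nc(G)) \le 4$, and since $\nc(G)$ is a spanning subgraph of $\nongen$, nothing further is needed for those groups. Only for $G = \mathrm{PSU}(n,q)$ with $n$ an odd prime does the paper argue directly, and there your step ``one joins $a$ to $b$ by a path of length at most $2$ \ldots\ or through a common $\nongen$-neighbour'' is precisely where real work is required. The paper's bridges $a,b$ are images of conjugates of the companion matrix $C(x^n-1)$, each stabilising a one-dimensional subspace of $\mathbb{F}_{q^2}^n$; Theorem~\ref{thm:unisubspaces} then guarantees a nontrivial element in the intersection of the two subspace stabilisers, and that element is the common $\nongen$-neighbour. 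Your proposal flags this step but supplies no mechanism beyond the $A_n$ example; without something like Theorem~\ref{thm:unisubspaces}, the unitary case does not go through.

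Second, the lower bound for $\mathbb{B}$ cannot be obtained by the Magma computation you describe: the baby monster has order roughly $4 \times 10^{33}$, and ruling out all length-$3$ $\nongen$-paths between two given elements is infeasible. The paper instead invokes Lemma~\ref{lem:intncgraphs}, which gives $\diam(\nongen) \ge \diam(\Delta_G) - 1$ where $\Delta_G$ is the intersection graph of $G$, together with the cited result that $\diam(\Delta_G) = 5$ for both $\mathbb{B}$ and $\mathrm{PSU}(7,2)$; this yields $\diam(\nongen) \ge 4$ with no computation in $G$ itself.
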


An earlier version of this theorem was communicated by this paper's author to Peter Cameron, and appears as a remark after Proposition 12.3 in his paper \cite{camerongraphs}. That version of the theorem is a key component of the proof of \cite[Theorem 1]{lucchininongen} (see the proof of \cite[Proposition 10]{lucchininongen}), which investigates the connectedness and diameter of $\nongen$ for a finite group $G$. This graph is closely related to the \emph{soluble graph} of $G$, recently studied in \cite{solgraph}; see Remark 7 in that paper.

The remainder of this paper is structured as follows. The focus of \S\ref{sec:prelimsandmax} is on preliminary results, and includes a comment on our computational methods and a proof of Proposition~\ref{prop:ncsimpleeven}. In \S\ref{sec:altsporexcep}, we prove the alternating, sporadic and exceptional cases of Theorem~\ref{thm:ncsimple}. We consider matrices and stabilisers of subspaces in linear and unitary groups in \S\ref{sec:matprel}, and then apply our results in \S\ref{sec:nclinearuni} to prove Theorem~\ref{thm:ncsimple} for those groups. Additionally, \S\ref{sec:nclinearuni} concludes with a proof of Theorem~\ref{thm:nongensimple}.

\section{Preliminaries}
\label{sec:prelimsandmax}

Here, we present preliminary results on the non-commuting, non-generating graph of a group, and on maximal subgroups of finite simple groups; in particular, we prove Proposition~\ref{prop:ncsimpleeven}.

\subsection{The non-commuting, non-generating graph}
\label{subsec:prelims}

In this subsection, $G$ is an arbitrary group, unless specified otherwise. Given vertices $x$ and $y$ of a graph $\Gamma$, we denote the distance in $\Gamma$ between $x$ and $y$ by $d(x,y)$, and if $x$ and $y$ are adjacent, then we write $x \sim y$.

\begin{lem}[{\cite[Corollary 6]{nilppaper}}]
\label{lem:propernc}
Let $H$ be a proper non-abelian subgroup of $G$. Then the induced subgraph of $\nc(G)$ corresponding to $H \setminus Z(H)$ is connected with diameter $2$.
\end{lem}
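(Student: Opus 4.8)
The plan is to show that any two vertices $x, y$ of the induced subgraph on $H \setminus Z(H)$ are at distance at most $2$, and that the diameter is not smaller (the latter is immediate, since a vertex is never adjacent to its own inverse or to any power of itself, so no vertex can be adjacent to all others when $\lvert H\rvert > 2$). For the upper bound, first observe that since $H$ is a proper subgroup of $G$, any two elements of $H$ generate a subgroup of $H \ne G$, so the generating condition in the definition of $\nc(G)$ is automatically satisfied for pairs inside $H$. Hence, restricted to $H \setminus Z(H)$, the graph $\nc(G)$ coincides with the \emph{non-commuting graph} of $H$: vertices $x, y$ are adjacent precisely when $[x,y] \ne 1$. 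It therefore suffices to prove that the non-commuting graph of a non-abelian group $H$ has diameter $2$ on the vertex set $H \setminus Z(H)$.

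For this, take $x, y \in H \setminus Z(H)$ with $[x,y] = 1$ (otherwise they are already adjacent). I would find a common neighbour $z \in H \setminus Z(H)$, i.e.\ an element that commutes with neither $x$ nor $y$. Since $x \notin Z(H)$, there exists $a \in H$ with $[a,x] \ne 1$; since $y \notin Z(H)$, there exists $b \in H$ with $[b,y] \ne 1$. The task is to combine this information into a single witness. One clean way: consider the centralisers $C_H(x)$ and $C_H(y)$, both proper subgroups of $H$. If some element lies outside both, that element is the desired $z$. If $C_H(x) \cup C_H(y) = H$, then one of them equals $H$ (a group is not the union of two proper subgroups), contradicting $x, y \notin Z(H)$ — so in fact $z$ always exists, and moreover $z \in H \setminus Z(H)$ since $z$ fails to centralise $x$. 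Thus $x \sim z \sim y$, giving $d(x,y) \le 2$. Combined with the observation that the diameter is at least $2$, we conclude it equals $2$; connectedness is a consequence.

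The argument is essentially routine, and the only point requiring minor care is the edge case $\lvert H \setminus Z(H)\rvert = 1$ or the degenerate situation where $H \setminus Z(H)$ has too few vertices to speak of diameter $2$ — but a non-abelian group has $\lvert H : Z(H)\rvert \ge 4$, so there are always at least three non-central elements, and the diameter statement makes sense. Since this lemma is quoted from \cite[Corollary 6]{nilppaper}, in the write-up I would simply cite that reference rather than reproduce the above; the sketch here records why the statement holds and indicates that the main (trivial) obstacle is merely the standard fact that a group is never the union of two proper subgroups.
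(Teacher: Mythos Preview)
Your proposal is correct, and indeed the paper gives no proof at all: the lemma is stated with a citation to \cite[Corollary 6]{nilppaper} and nothing more. Your sketch is exactly the standard argument behind that cited result --- reduce to the non-commuting graph of $H$ (the generation condition is vacuous since $H < G$), then use that a group is not the union of two proper subgroups to find a common neighbour --- so your plan to simply cite the reference in the write-up matches the paper precisely.
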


\begin{prop}
\label{prop:isolvertnc}
Suppose that $G$ is finite and insoluble. Then $\nc(G)$ has no isolated vertices. Equivalently, each element of $G \setminus Z(G)$ is a non-central element of some maximal subgroup.
\end{prop}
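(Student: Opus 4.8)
The plan is to show that every $x \in G \setminus Z(G)$ has a neighbour in $\nc(G)$, by exhibiting a non-abelian proper subgroup $H$ of $G$ with $x \in H \setminus Z(H)$; Lemma~\ref{lem:propernc} then guarantees $x$ has a neighbour (in fact $x$ lies in a connected subgraph of diameter $2$). The equivalence with the ``maximal subgroup'' formulation is essentially a restatement: if $x$ lies in a proper subgroup $K$ as a non-central element, then it lies in a maximal subgroup $M \ge K$, and $x \notin Z(M)$ since some element of $K$ fails to centralise $x$; conversely a maximal subgroup is a proper subgroup. So the only real content is producing such an $H$.

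First I would dispose of the case where $x$ lies in some proper subgroup $H$ with $x \notin Z(H)$ and $H$ non-abelian directly. The hard part is when $x$ is ``central wherever it sits'': more precisely, when every proper subgroup containing $x$ is abelian, or has $x$ in its centre. I would instead argue via the centraliser $C = C_G(x)$. Since $x \notin Z(G)$, $C$ is a proper subgroup; if $C$ is non-abelian, then $x \in Z(C) \le C$, but $x$ is not isolated only if it has a \emph{non-commuting} neighbour, so this is the wrong subgroup to use — I need a subgroup in which $x$ is \emph{not} central. So the real target is: find a proper subgroup $H$ with $x \in H$ but $C_H(x) \ne H$, i.e.\ $H \not\le C_G(x)$, and with $H$ non-abelian. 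Take any $g \in G \setminus C_G(x)$ and consider $H_0 = \langle x, g \rangle$. If $H_0 \ne G$ we are done, since $H_0$ is non-abelian (as $[x,g] \ne 1$) and $x \notin Z(H_0)$. The obstruction is precisely that $\langle x, g \rangle = G$ for \emph{every} $g$ not centralising $x$ — equivalently, $x$ together with any non-commuting element generates $G$.

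To handle that obstruction I would use insolubility. Suppose $\langle x, g\rangle = G$ for all $g \notin C_G(x)$. Pick a maximal subgroup $M$ containing $C_G(x)$ — this exists since $C_G(x)$ is proper. I claim $x \in Z(M)$: indeed any $g \in M$ with $[g,x]\ne 1$ would give $\langle x,g\rangle = G$, contradicting $\langle x, g\rangle \le M < G$; so $M \le C_G(x)$, forcing $M = C_G(x)$. Thus $C_G(x)$ is itself maximal, and $x \in Z(C_G(x))$. Now I would derive a contradiction from insolubility: consider the smallest normal subgroup structure, or work in $G/\mathrm{rad}$ — actually cleaner is to invoke a Baer–Suzuki or thin-group style fact. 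Since $G$ is finite and insoluble, it is not the case that $C_G(x)$ is maximal with $x$ central in it for \emph{all} non-central $x$; but for a \emph{single} $x$ this can genuinely happen in simple groups, so I must use more. The resolution: replace $x$ by a suitable power. Some power $x^k \ne 1$ has prime order $p$; and $C_G(x) \le C_G(x^k)$, so if $C_G(x)$ is maximal then either $C_G(x) = C_G(x^k)$ or $C_G(x^k) = G$; the latter is impossible as $x^k \ne 1 = $ centre. So we may assume $x$ has prime order $p$, with $C_G(x)$ maximal. Then by the main theorem on groups with a maximal self-centralising subgroup of prime order — or directly, by a transfer/fusion argument — $G$ has a normal $p$-complement or $G/O_{p'}(G)$ is close to a Frobenius group, contradicting insolubility.

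The step I expect to be the genuine obstacle is this last one: ruling out that $C_G(x)$ is a maximal subgroup equal to its own centre (so $x \in Z(C_G(x))$) in an insoluble group. I would pin it down by the following route: if $C_G(x) = C$ is maximal and abelian-ish in the sense $x \in Z(C)$, consider any prime $p \mid o(x)$, replace $x$ by the $p$-part so $o(x) = p$; then a Sylow $p$-subgroup $P \le C$ satisfies $N_G(P) \le C$ is not automatic, but $C_G(x) \ge C_G(P')$ for suitable $P'$... rather than this, I would simply cite the structural fact that a finite group with an abelian maximal subgroup is soluble if that subgroup has even order (Herstein), and otherwise — odd order abelian maximal subgroup — $G$ has a normal Sylow or is metacyclic, again soluble. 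Since $G$ is insoluble, $C_G(x)$ cannot be abelian and maximal; but we only needed $x \in Z(C_G(x))$, not $C_G(x)$ abelian. To close this gap I would instead observe: if $x \in Z(C_G(x))$ then $C_G(x) \le C_G(x)$ trivially, and $\langle C_G(x), g\rangle = G$ for $g \notin C_G(x)$ while $C_G(x)$ acts on the conjugates of $x$; the orbit of $x$ under $G$ meets $C_G(x)$ only in elements centralised by $C_G(x)$... This is the delicate point, and in the final write-up I would route it through the cleanest available classical result (Herstein's theorem on abelian maximal subgroups, applied after reducing to $o(x)$ prime and showing $C_G(x)$ abelian via $x \in Z(C_G(x))$ together with the primitivity of the action), accepting that isolating the exact minimal hypothesis is where the care is needed.
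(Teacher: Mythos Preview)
Your overall architecture matches the paper's: reduce to the situation where $x$ lies in a unique maximal subgroup $M$ with $x \in Z(M)$, then invoke Herstein's theorem (a finite group with an abelian maximal subgroup is soluble) to contradict insolubility. You also correctly establish that if $x$ is isolated then $M = C_G(x)$ is that unique maximal subgroup.

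The genuine gap is exactly the one you flag yourself and never close: you need $M$ \emph{abelian}, not merely $x \in Z(M)$, and nothing in your argument produces this. Your attempted fixes do not work. The reduction to $|x|=p$ is fine, but the appeal to ``groups with a maximal self-centralising subgroup of prime order'' is misapplied: here $C_G(\langle x\rangle)=C_G(x)=M$, which is the large maximal subgroup, not $\langle x\rangle$, so $\langle x\rangle$ is not self-centralising and no Frobenius/normal $p$-complement conclusion follows. The final suggestion, ``showing $C_G(x)$ abelian via $x\in Z(C_G(x))$ together with the primitivity of the action'', is circular: $x\in Z(C_G(x))$ holds for every element of every group and carries no information, and ``primitivity'' is doing no visible work.

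The paper does not attempt an ad hoc argument here; it cites \cite[Proposition~2.6]{nonsimplepaper}, which supplies precisely the missing implication that an isolated vertex of $\nc(G)$ lies in an \emph{abelian} maximal subgroup. That step is where the actual content lives, and your proposal would need either to reproduce that argument or to cite it. Once $M$ is abelian, Herstein finishes the proof in one line, as you say.
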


\begin{proof}
It is clear from the definition of $\nc(G)$ that a vertex $x \in \nc(G)$ is isolated if and only if $x$ lies in a unique maximal subgroup $M$ of $G$ and $x \in Z(M)$. Since $G$ is finite, it follows from \cite[Proposition 2.6]{nonsimplepaper} that each isolated vertex of $\nc(G)$ lies in an abelian maximal subgroup of $G$. However, each maximal subgroup of the finite insoluble group $G$ is non-abelian \cite{herstein}.
\end{proof}

Our next result highlights a connection between $\nc(G)$ and the \emph{intersection graph} $\Delta_G$ of $G$. This latter graph, introduced in \cite{csakany}, has vertex set the proper nontrivial subgroups of $G$, with subgroups adjacent if and only if they intersect nontrivially.

\begin{lem}
\label{lem:intncgraphs}
Let $\Gamma$ be equal to $\nc(G)$ or the non-generating graph of $G$. Additionally, suppose that $Z(G) = 1$, and that $\Gamma$ is connected with finite diameter. Then $\diam(\Gamma) \ge \diam(\Delta_G)-1$.
\end{lem}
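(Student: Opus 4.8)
The plan is to prove the equivalent statement that $\Delta_G$ is connected with $\diam(\Delta_G) \le \diam(\Gamma) + 1$. The bridge between the two graphs is the following observation: if $\{x,y\}$ is an edge of $\Gamma$ — whether $\Gamma = \nc(G)$ or $\Gamma = \nongen$ — then by definition $\langle x, y \rangle \ne G$, so $\langle x,y\rangle$ is a proper subgroup of $G$; and it is nontrivial since it contains $x \ne 1$. Hence $\langle x, y\rangle$ is a vertex of $\Delta_G$. Note also that since $Z(G) = 1$, the vertex set of $\Gamma$ is $G \setminus \{1\}$ in both cases, and $G$, being non-abelian, has a proper nontrivial (cyclic) subgroup, so $\Delta_G$ is nonempty.

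First I would fix proper nontrivial subgroups $A$ and $B$ of $G$. If $A \cap B \ne 1$, then $A$ and $B$ are adjacent in $\Delta_G$ and we are done, so assume $A \cap B = 1$, and pick nontrivial elements $a \in A$ and $b \in B$; then $a \ne b$, and $a$ and $b$ are vertices of $\Gamma$. Since $\Gamma$ is connected with finite diameter, there is a geodesic $a = z_0 \sim z_1 \sim \cdots \sim z_k = b$ in $\Gamma$ with $1 \le k = d_\Gamma(a,b) \le \diam(\Gamma)$. Next I would build a walk in $\Delta_G$: for $0 \le i \le k-1$ set $H_i = \langle z_i, z_{i+1}\rangle$, which by the observation above is a vertex of $\Delta_G$. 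Then $z_{i+1} \in H_i \cap H_{i+1}$ for $0 \le i \le k-2$, while $a \in A \cap H_0$ and $b \in B \cap H_{k-1}$; as each of these intersections is nontrivial, consecutive terms of the sequence $A, H_0, H_1, \ldots, H_{k-1}, B$ are either equal or adjacent in $\Delta_G$. Deleting repeated consecutive terms yields a genuine walk from $A$ to $B$ of length at most $k + 1 \le \diam(\Gamma) + 1$, so $d_{\Delta_G}(A,B) \le \diam(\Gamma)+1$. As $A$ and $B$ were arbitrary, $\Delta_G$ is connected with $\diam(\Delta_G) \le \diam(\Gamma)+1$, which rearranges to the claimed inequality.

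The argument is short, and I do not expect a substantive obstacle; the only points needing care are the degenerate configurations — arranging $a = b$ when $A \cap B \ne 1$, and contracting coincidences among the $H_i$ (or among $A, H_0$ and $H_{k-1}, B$) so that the sequence becomes an honest walk — together with verifying that the edge condition of $\Gamma$, under either of its two interpretations, genuinely forces $\langle z_i, z_{i+1}\rangle$ to be a proper nontrivial subgroup and hence a vertex of $\Delta_G$.
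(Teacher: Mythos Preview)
Your argument is correct and is essentially the direct proof underlying the result the paper cites: the paper invokes Propositions~12.1 and~2.3 of \cite{camerongraphs} to obtain $|\diam(\Delta_G)-\diam(\nongen)|\le 1$, and then observes that $\nc(G)$ is a spanning subgraph of $\nongen$ when $Z(G)=1$; you instead prove the needed inequality $\diam(\Delta_G)\le\diam(\Gamma)+1$ from scratch by converting a geodesic in $\Gamma$ into a walk in $\Delta_G$ via the subgroups $\langle z_i,z_{i+1}\rangle$. The content is the same; your version is self-contained, while the paper's is shorter by outsourcing to the reference.
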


\begin{proof}
By Propositions 12.1 and 2.3 of \cite{camerongraphs} (which assume that $G$ is finite, but whose proofs also hold for infinite groups), the diameters of $\Delta_G$ and the non-generating graph of $G$ differ by at most one. As $Z(G)$ is trivial, $\nc(G)$ is a spanning subgraph of the non-generating graph of $G$, and so the diameter of the former is at least the diameter of the latter.
\end{proof}

We conclude this subsection with a a brief description of our computational methods for determining $\diam(\nc(G))$ in Magma (for sufficiently small $G$). Let $X$ be a set of representatives of generators for the non-central cyclic subgroups of $G$, and for each $x \in X$, let $Y_x$ be a set of representatives for the elements of $X$ up to conjugacy in $C_G(x)$. We also obtain a set $X'$ from $X$ by identifying elements if they generate conjugate cyclic subgroups. It is easy to see that $\mathrm{Aut}(\nc(G))$ contains $\mathrm{Aut}(G)$, and that $\diam(\nc(G))= \max\{2,\diam(\Gamma)\}$, where $\Gamma$ is the graph obtained from $\nc(G)$ by identifying vertices that generate identical cyclic subgroups. We obtain $\diam(\nc(G))$ by calculating the maximum distance in $\Gamma$ between the elements in each pair $(x,y)$ with $x \in X'$ and $y \in Y_x$, via computations involving centralisers of elements and intersections of maximal subgroups.

The Magma code used in these calculations is contained in the file \texttt{diam\char`_nc\char`_ng} in \cite{phdcode}; see also \cite[Appendix A]{saulthesis} for further information. When using this code to calculate $\diam(\nc(G))$ in this paper, we always represent $G$ as a permutation group of minimal degree. We note that the time and memory requirements for running this code are negligible for sufficiently small groups (such as $A_5$), but are in the order of a few days and a few gigabytes for the largest groups investigated in this paper via these methods, namely $G_2(3)$, $\mathrm{PSL}(4,3)$ and $\mathrm{M}_{23}$.

\subsection{Maximal subgroups of non-abelian finite simple groups}
\label{subsec:maxsubs}

Assume throughout this subsection that $G$ is non-abelian, finite and simple.

\begin{lem}
\label{lem:simplenoncent}
Let $L$ and $M$ be maximal subgroups of $G$, with $|L|$ even.
\begin{enumerate}[label={(\roman*)},font=\upshape]
\item \label{simplenoncent1} If $Z(L)$ contains an involution $a$, then $L \setminus Z(L)$ contains a $G$-conjugate of $a$.
\item \label{simplenoncent2} $L$ contains an involution that does not lie in $Z(L) \cup Z(M)$.
\item \label{simplenoncent3} Suppose that $|M|$ is even, and let $a$ be an involution of $L \setminus M$. Then $M \setminus Z(M)$ contains an involution $b$ that does not commute with $a$.
\item \label{simplenoncent4} Suppose 
that $L \cap M$ lies in $Z(M)$ and contains an involution $f$. Additionally, let ${u \in L \setminus M}$. Then $L \setminus M$ contains an involution $f'$ with $[u,f'] \ne 1$.
\end{enumerate}
\end{lem}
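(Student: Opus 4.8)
The plan is to prove the four parts in order, using \ref{simplenoncent1} as a tool inside \ref{simplenoncent2} and \ref{simplenoncent3}, and reducing parts \ref{simplenoncent2}--\ref{simplenoncent4} to a single ``forbidden normal subgroup'' argument that exploits the simplicity of $G$. Throughout I will freely use the standing facts that a maximal subgroup $K < G$ satisfies $N_G(K) = K$ and (being a maximal subgroup of a finite insoluble group) is non-abelian, so $Z(K) \subsetneq K \subsetneq G$; that $C_G(x) \ne G$ whenever $x \ne 1$; and that $\langle K, x \rangle = G$ whenever $K$ is maximal and $x \notin K$.

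For \ref{simplenoncent1}: since $a \in Z(L)$ we have $L \le C_G(a) \subsetneq G$, so maximality forces $L = C_G(a)$. Next I would show $a^G \cap Z(L) = \{a\}$: if $a^g \in Z(L)$, then $L \le C_G(a^g) = L^g$, hence $L = L^g$ and $g \in N_G(L) = L$, so $a^g = a$. Finally, fix a Sylow $2$-subgroup $P$ of $G$ containing $a$, so that $C_P(a) \le C_G(a) = L$. Since $|G|$ is even, $O_{2'}(G) = 1$, whence $Z^*(G) = Z(G) = 1$ and $a \notin Z^*(G)$; Glauberman's $Z^*$-theorem then provides some $g$ with $a^g \in C_P(a) \setminus \{a\} \subseteq L \setminus \{a\}$, and by the previous step $a^g \in L \setminus Z(L)$, as required.

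For \ref{simplenoncent2} and \ref{simplenoncent3} I would run a common template. In each case, pick an involution $c$ lying outside the centre of the relevant maximal subgroup $K$ --- $c \in L \setminus Z(L)$ in \ref{simplenoncent2}, $c \in M \setminus Z(M)$ in \ref{simplenoncent3} --- which exists because if that centre contains an involution then \ref{simplenoncent1} supplies a non-central conjugate, and otherwise every involution of the even-order subgroup already lies outside its centre. Every $K$-conjugate of $c$ is again an involution outside $Z(K)$. If some conjugate has the desired property (for \ref{simplenoncent2}, lying outside $Z(M)$; for \ref{simplenoncent3}, failing to commute with $a$) we are done; otherwise all conjugates of $c$ lie in a ``bad'' subgroup --- $Z(M)$ in \ref{simplenoncent2}, $C_G(a)$ in \ref{simplenoncent3} --- so that $N := \langle c^{K} \rangle$ is a nontrivial normal subgroup of $K$ contained in it. In \ref{simplenoncent2}, $N \le Z(M)$ is centralised by $M$ and normalised by $L$, hence normal in $\langle L, M \rangle = G$ (using $M \not\le L$); in \ref{simplenoncent3}, $N \le C_G(a)$ is centralised by $a$ and normalised by $M$, hence normal in $\langle M, a \rangle = G$. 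Either way $1 \ne N \trianglelefteq G$ with $N$ proper, contradicting simplicity. (In \ref{simplenoncent2} the degenerate case $M = L$ is immediate, since then one only needs an involution outside $Z(L)$.)

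For \ref{simplenoncent4}: the key opening move is that $f \in L \cap M \le Z(M)$ together with maximality forces $M = C_G(f)$, so $u \notin M$ gives $[u,f] \ne 1$. I would then consider the involutions $f^{u^i}$ for $i \in \mathbb{Z}$, all of which lie in $L$. If every $f^{u^i}$ lay in $Z(M)$, then $N := \langle f^{u^i} : i \in \mathbb{Z} \rangle \le Z(M)$ would be centralised by $M$ and normalised by $u$, hence a nontrivial proper normal subgroup of $\langle M, u \rangle = G$ --- impossible. So $f^{u^i} \notin Z(M)$ for some $i$; since $L \cap M \le Z(M)$, this forces $f^{u^i} \in L \setminus M$, and $[u, f^{u^i}] = 1$ would give $f^{u^{i+1}} = f^{u^i}$, i.e.\ $f^u = f$, contradicting $[u,f] \ne 1$. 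Hence $f' := f^{u^i}$ works. I expect \ref{simplenoncent1} to be the main obstacle: producing a second $G$-conjugate of the central involution $a$ inside $C_G(a)$ is exactly the situation governed by Glauberman's $Z^*$-theorem, and the normal-closure trick used for the other three parts does not apply here because the only overgroup in sight is $C_G(a)$ itself rather than all of $G$; once \ref{simplenoncent1} is available, the rest is essentially bookkeeping.
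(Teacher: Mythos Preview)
Your proof is correct. Part~\ref{simplenoncent1} matches the paper's argument exactly (both invoke Glauberman's $Z^*$-theorem to place a distinct $G$-conjugate of $a$ inside $C_G(a)=L$, then observe that any conjugate of $a$ landing in $Z(L)$ must equal $a$). For parts~\ref{simplenoncent2}--\ref{simplenoncent4}, however, your route genuinely differs from the paper's.

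The paper handles \ref{simplenoncent2}--\ref{simplenoncent4} by explicit construction. In \ref{simplenoncent2} it picks a non-central involution $y\in L$ and, if $y\in Z(M)$, replaces it by $y^h$ for some $h\in L\setminus(Z(L)\cup M)$, checking that $C_G(y^h)=M^h\ne M$. In \ref{simplenoncent3} it splits on the parity of $|Z(M)|$: when $|Z(M)|$ is odd it uses the subgroup generated by \emph{all} involutions of $M$ (your normal-closure idea applied to the full involution set rather than a single class), and when $Z(M)$ contains an involution $z$ it takes a non-central involution $c\in M$ and, if $[a,c]=1$, passes to $zc$. In \ref{simplenoncent4} it imports the involution $s$ produced in \ref{simplenoncent2} and, if $[u,s]=1$, shows that $f^s\in L\setminus M$ works. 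Your version replaces all three constructions by a single template: fix an involution, look at its orbit under the relevant group ($L$-conjugates in \ref{simplenoncent2}, $M$-conjugates in \ref{simplenoncent3}, $\langle u\rangle$-conjugates of $f$ in \ref{simplenoncent4}), and if the whole orbit is ``bad'' obtain a nontrivial proper normal subgroup of $G$. This is more uniform and eliminates the case split in \ref{simplenoncent3}; the paper's arguments are instead more constructive, pinning down the witness explicitly and, in \ref{simplenoncent4}, recycling the output of \ref{simplenoncent2} rather than rerunning the contradiction.
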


\begin{proof}\leavevmode

\noindent (i) Suppose that $Z(L)$ contains an involution $a$, so that $C_G(a) = L$. By \cite[Corollary 1]{glauberman}, there exists $g \in G$ such that $a \ne a^g \in C_G(a) = L$. In fact, $g \notin L$, as $a \in Z(L)$. Thus $L \ne L^g = C_G(a^g)$, and so the involution $a^g$ is non-central in $L$.

\medskip

\noindent (ii) We will prove that $L$ contains an involution $s \notin Z(L) \cup Z(M)$. By (i), there exists an involution $y \in L \setminus Z(L)$. If $y \notin Z(M)$, then we can set $s = y$. If instead $y \in Z(M)$ (so that $L \ne M$), then $C_G(y) = M$. Additionally, since $Z(L)$ and $L \cap M$ are proper subgroups of $L$, there exists $h \in L \setminus (Z(L) \cup M)$, and $y^h$ is a non-central involution of $L$. Moreover, $M \ne M^h = C_G(y^h)$. Therefore, $y^h \notin Z(M)$, and so we can set $s = y^h$.

\medskip

\noindent (iii) Suppose first that $|Z(M)|$ is odd, let $S$ be the set of involutions of $M$, and let $Q_M \trianglelefteq M$ be the subgroup generated by $S$. If the involution $a \in L \setminus M$ commutes with each element of $S$, then $a \in C_G(Q_M)$, and so $Q_M \trianglelefteq \langle M, a \rangle = G$, contradicting the simplicity of $G$. Thus there exists $r \in S \setminus C_G(a)$. As $S \cap Z(M) = 1$, we can set $b = r$.

If instead $Z(M)$ contains an involution $z$, then by applying (i) to $M$, we deduce that $M \setminus Z(M)$ contains an involution $c$. If $[a,c] \ne 1$, then we can set $b = c$. Otherwise, as $a \notin M = C_G(z)$, we see that $[a,zc] = [a,z]^c \ne 1$. Thus we may set $b$ to be the involution $zc \in M \setminus Z(M)$.

\medskip

\noindent (iv) Let $s$ be the involution from (ii), so that $s \in L$ and $s \notin L \cap M \le Z(M)$. If $[u,s] \ne 1$, then we can set $f' = s$. Assume therefore that $[u,s] = 1$. Since $C_G(f) = M$ and $C_G(f^s) = M^s \ne M$, we obtain $f^s \in L \setminus Z(M)$, and hence $f^s \notin L \cap M$. If $f^s \in C_G(u)$, then $C_L(u)$ contains $\langle s,f^s \rangle$, which contains $f$. This contradicts $u \notin M = C_G(f)$, and so we can set $f' = f^s$.
\end{proof}

The following lemma is one of the main ingredients in the proof of Theorem~\ref{thm:ncsimple}.

\begin{lem}
\label{lem:ncmaxsimple}
Let $L$ and $M$ be maximal subgroups of $G$ of even order, and let $x \in L \setminus Z(L)$ and $y \in M \setminus Z(M)$. Then $d(x,y) \le 5$, as vertices of $\nc(G)$. Moreover, if $L$ contains an involution $a$ such that $d(x,a) \le 1$, then $d(x,y) \le 4$.
\end{lem}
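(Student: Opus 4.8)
The plan is to connect $x$ to $y$ by a path in $\nc(G)$ that passes through a pair of non-commuting involutions, one in $L$ and one in $M$. The key elementary observation is that any two involutions of $G$ generate a (finite) dihedral subgroup, which is proper since $G$, being non-abelian and simple, is not dihedral; hence two \emph{non-commuting} involutions of $G$ are automatically adjacent in $\nc(G)$. Since $G$ is insoluble, its maximal subgroups $L$ and $M$ are non-abelian, so Lemma~\ref{lem:propernc} shows that the induced subgraphs of $\nc(G)$ on $L \setminus Z(L)$ and on $M \setminus Z(M)$ each have diameter $2$. Thus it suffices to exhibit involutions $a \in L \setminus Z(L)$ and $b \in M \setminus Z(M)$ with $[a,b] \ne 1$: such a pair yields $d(x,y) \le d(x,a) + 1 + d(b,y) \le 2 + 1 + 2 = 5$.

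To produce $a$ and $b$, first dispose of the case $L = M$, where Lemma~\ref{lem:propernc} already gives $d(x,y) \le 2$; so assume $L \ne M$. By Lemma~\ref{lem:simplenoncent}\ref{simplenoncent2} there is an involution $s \in L \setminus (Z(L) \cup Z(M))$. If $s \in M$, then $s$ is non-central in both $L$ and $M$, so $d(x,y) \le d(x,s) + d(s,y) \le 4$. If $s \notin M$, then $s$ is an involution of $L \setminus M$, and Lemma~\ref{lem:simplenoncent}\ref{simplenoncent3} supplies the required $b \in M \setminus Z(M)$ with $[s,b] \ne 1$; taking $a = s$ completes the bound $d(x,y) \le 5$.

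For the sharper bound, suppose $L$ contains an involution $a$ with $d(x,a) \le 1$. Then $a \notin Z(L)$, because $x \in L$ and either $a = x$ or $[a,x] \ne 1$. If $a \notin M$, then Lemma~\ref{lem:simplenoncent}\ref{simplenoncent3} gives $b \in M \setminus Z(M)$ with $a \sim b$, so $d(x,y) \le 1 + 1 + 2 = 4$; if $a \in M \setminus Z(M)$, then $d(x,y) \le d(x,a) + d(a,y) \le 1 + 2 = 3$. The remaining possibility is $a \in Z(M)$, so that $C_G(a) = M \ne L$; here I would split further according to whether $L \cap M \le Z(M)$. If it is, then --- using that $a$ commutes with no element of $L \setminus M$, so that either $x$ itself lies in $L \setminus M$ (when $d(x,a) = 1$) or $x = a$ --- Lemma~\ref{lem:simplenoncent}\ref{simplenoncent4} provides an involution $f' \in L \setminus M$ with $d(x, f') \le 1$, and then Lemma~\ref{lem:simplenoncent}\ref{simplenoncent3} carries us from $f'$ into $M \setminus Z(M)$, giving $d(x,y) \le 1 + 1 + 2 = 4$. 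If instead $L \cap M \not\le Z(M)$, then $a \in (L\cap M)\setminus Z(L)$ shows $L \cap M \not\le Z(L)$ as well, so since a group is never the union of two proper subgroups there is an element $c \in (L \cap M) \setminus (Z(L) \cup Z(M))$; as $c$ is non-central in both $L$ and $M$, Lemma~\ref{lem:propernc} gives $d(x,y) \le d(x,c) + d(c,y) \le 2 + 2 = 4$.

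I expect the main obstacle to be precisely the case $a \in Z(M)$ in the second assertion: routing the path through $a$ itself costs one step too many, since $a$ is adjacent to nothing inside $M$, so one must instead descend from $x$ to an involution lying outside $M$ (via Lemma~\ref{lem:simplenoncent}\ref{simplenoncent4}), or bypass $a$ altogether through a carefully chosen element of $L \cap M$. Checking that the hypotheses of Lemma~\ref{lem:simplenoncent}\ref{simplenoncent4} hold --- in particular that there is a usable $u \in L \setminus M$ and that $L \cap M$ contains the involution $a$ --- is where the delicacy lies. Beyond that, once the right involutions are named, the argument is just the assembly of short subpaths from Lemma~\ref{lem:propernc} together with the dihedral observation, so the substantive content is really concentrated in Lemma~\ref{lem:simplenoncent}.
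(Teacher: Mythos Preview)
Your argument is correct and follows essentially the same route as the paper's proof: reduce to $L\ne M$, pick a non-central involution $s\in L\setminus(Z(L)\cup Z(M))$ via Lemma~\ref{lem:simplenoncent}\ref{simplenoncent2}, and then use Lemma~\ref{lem:simplenoncent}\ref{simplenoncent3} and the dihedral observation to bridge into $M\setminus Z(M)$. The second assertion is also handled the same way, splitting on whether the given involution $a$ lies in $Z(M)$ and, if so, on whether $L\cap M\le Z(M)$.

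One small point deserves tightening. In the subcase $a\in Z(M)$ and $L\cap M\le Z(M)$, your appeal to Lemma~\ref{lem:simplenoncent}\ref{simplenoncent4} needs $u\in L\setminus M$, so it does not literally apply when $x=a\in M$. This is harmless: when $x=a$, \emph{any} involution $f'\in L\setminus M$ satisfies $[x,f']\ne 1$ (since $C_G(a)=M$), and such an $f'$ exists because the involution $s$ from Lemma~\ref{lem:simplenoncent}\ref{simplenoncent2} lies outside $Z(M)\supseteq L\cap M$, hence outside $M$. The paper sidesteps this by working with the whole set $\mathcal{A}$ of involutions in $L\setminus Z(L)$ at distance $\le 1$ from $x$, reducing to $\mathcal{A}\subseteq Z(M)$, and then using $s\notin\mathcal{A}$ together with $a\sim s$ to force $a\ne x$; this organisation also lets the paper pick $h\in L\cap M$ not centralising $x$ (rather than merely non-central in $L$), yielding $d(x,y)\le 3$ in that final subcase instead of your $\le 4$. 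Either bound suffices for the lemma.
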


\begin{proof}
We will use Lemma~\ref{lem:propernc} several times in this proof without further reference. First, we may assume that $L \ne M$, as otherwise $d(x,y) \le 2$. Let $\mathcal{C}$ be the set of involutions in $L \setminus Z(L)$. Lemma~\ref{lem:simplenoncent}\ref{simplenoncent2} shows that there exists $c \in \mathcal{C} \setminus Z(M)$. Notice that $d(x,c) \le 2$. If $c \in M \setminus Z(M)$, then $d(c,y) \le 2$, and so $d(x,y) \le d(x,c)+d(c,y) \le 4$. If instead $c \in G \setminus M$, then Lemma~\ref{lem:simplenoncent}\ref{simplenoncent3} yields an involution $b \in M \setminus Z(M)$ such that $[c,b] \ne 1$. As $\langle c, b \rangle$ is dihedral, $c \sim b$. Since $d(b,y) \le 2$, we conclude in this case that $d(x,y) \le 5$, with $d(x,y) \le 4$ if $d(x,c) \le 1$.

Now, let $\mathcal{A}$ be the set of involutions $a \in \mathcal{C}$ with $d(x,a) \le 1$. By the previous paragraph, it remains to consider the case where $\mathcal{C} \subseteq (G \setminus M) \cup Z(M)$ and $\varnothing \ne \mathcal{A} \subseteq Z(M)$. Note that $c \in \mathcal{C} \setminus M$ in this case.

Let $a \in \mathcal{A} \subseteq Z(M)$, so that $C_G(a) = M$. Then $a \sim c$, and since $c \notin \mathcal{A}$, it follows from the definition of $\mathcal{A}$ that $a \ne x$ and $a \sim x$. Thus $x \notin M$. Lemma~\ref{lem:simplenoncent}\ref{simplenoncent4} (with $f = a$ and $u = x$) now shows that $L \cap M \not\le Z(M)$, as otherwise $L \setminus M$ would contain an involution in $\mathcal{A}$. Hence $L \cap M$ contains an element not centralising $M$, and the element $a$ not centralising $x$. As no group is the union of two proper subgroups, there exists $h \in L \cap M$ centralising neither $M$ nor $x$. Thus $d(h,y) \le 2$ and $x \sim h$, and therefore $d(x,y) \le 3$.
\end{proof}

\begin{proof}[Proof of Proposition~\ref{prop:ncsimpleeven}]
For each $x \in \nc(G)$, Proposition~\ref{prop:isolvertnc} gives $x \in K \setminus Z(K)$ for some maximal subgroup $K$ of $G$ (of even order). Thus $\diam(\nc(G)) \le 5$ by Lemma~\ref{lem:ncmaxsimple}.
\end{proof}

As the proofs of \cite[Corollary 1]{glauberman} and Proposition~\ref{prop:isolvertnc} are independent of the classification of finite simple groups, so are the proofs of Lemmas~\ref{lem:simplenoncent} and~\ref{lem:ncmaxsimple} and Proposition~\ref{prop:ncsimpleeven}.

To prove Theorem~\ref{thm:ncsimple}\ref{ncsimple1}, it remains to consider the non-abelian finite simple groups that contain maximal subgroups of odd order.

\begin{thm}[{\cite[Theorem 2]{liebecksaxl}}]
\label{thm:simpleoddmax}
Suppose that $G$ is not isomorphic to the monster group. Then $G$ has a maximal subgroup of odd order if and only if $G$ is isomorphic to one of:
\begin{enumerate}[label={(\roman*)},font=\upshape]
\item $A_p$, with $p$ prime, $p \equiv 3 \pmod 4$ and $p \notin \{7,11,23\}$;
\item $\mathrm{PSL}(n,q)$, with $n$ prime, $(n,q) \ne (3,4)$, and $q \equiv 3 \pmod 4$ if $n = 2$;
\item $\mathrm{PSU}(n,q)$, with $n$ an odd prime and $(n,q) \notin \{(3,3),(3,5),(5,2)\}$; or
\item the Mathieu group $\mathrm{M}_{23}$, the Thompson group $\mathrm{Th}$, or the baby monster group $\mathbb{B}$.
\end{enumerate}
\end{thm}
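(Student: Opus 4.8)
The plan is to prove both implications via the classification of finite simple groups (CFSG); no classification-free argument is known, and indeed the paper notes as much. For the ``if'' direction I would exhibit an explicit maximal subgroup of odd order in each listed group. For $A_p$ with $p \equiv 3 \pmod 4$, the subgroup $H = \mathrm{AGL}(1,p) \cap A_p$ of order $p(p-1)/2$ is a Frobenius group of odd order; it is a point stabiliser in a primitive action of prime degree and is maximal in $A_p$ unless a $2$-transitive group of prime degree properly contains it, which (by the classical list of $2$-transitive groups of prime degree) happens exactly for $p \in \{7,11,23\}$, yielding $\mathrm{PSL}(3,2)$, $\mathrm{M}_{11}$, $\mathrm{M}_{23}$ — whence those primes are excluded. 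For $\mathrm{PSL}(2,q)$ with $q \equiv 3 \pmod 4$ the Borel subgroup has odd order $q(q-1)/2$ and is maximal as a point stabiliser of the natural $2$-transitive action. For $\mathrm{PSL}(n,q)$ and $\mathrm{PSU}(n,q)$ with $n$ an odd prime I would take the normaliser of a cyclic maximal torus of order $(q^n-1)/((q-1)\gcd(n,q-1))$, respectively $(q^n+1)/((q+1)\gcd(n,q+1))$; these lie in Aschbacher's class $\mathcal{C}_3$, are maximal by results of Kantor and of Bereczky, and a short arithmetic check on $q \bmod 4$ and on the relevant greatest common divisor shows their order is odd, the pairs $(n,q) \in \{(3,4),(3,3),(3,5),(5,2)\}$ being precisely those where the subgroup is non-maximal or of even order. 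Finally, for $\mathrm{M}_{23}$, $\mathrm{Th}$ and $\mathbb{B}$ the ATLAS exhibits the odd-order maximal subgroups $23{:}11$, $31{:}15$ and $47{:}23$.

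\textbf{Necessity.} Conversely, suppose $M$ is a maximal subgroup of $G$ with $|M|$ odd. By the Feit--Thompson theorem $M$ is solvable, and $|G{:}M|$ is divisible by the full $2$-part of $|G|$. I would then run through the CFSG families. If $G = A_n$, every intransitive or imprimitive maximal subgroup of $A_n$ has even order, so $M$ is primitive; being solvable and primitive it has affine type, forcing $n = p^k$, and since the translation subgroup has even order when $p = 2$ and $\mathrm{AGL}(k,p) \cap A_{p^k}$ has even order for $k \ge 2$, we get that $n = p$ is prime and $M = \mathrm{AGL}(1,p) \cap A_p$, maximal of odd order exactly when $p \equiv 3 \pmod 4$ and $p \notin \{7,11,23\}$. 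If $G$ is sporadic, the known lists of maximal subgroups are inspected directly. If $G$ is of Lie type, Aschbacher's theorem (together with the Liebeck--Seitz analysis for exceptional $G$) places $M$ in a geometric class $\mathcal{C}_1,\dots,\mathcal{C}_8$ or the almost-simple class $\mathcal{S}$; the latter is impossible, as a member of $\mathcal{S}$ has a non-abelian simple section and hence even order, while among the geometric classes a comparison of subgroup orders modulo $2$, combined with the containments between classes (e.g.\ reducible non-parabolic subgroups lie inside parabolics), leaves only the $\mathcal{C}_3$ torus normalisers in $\mathrm{PSL}(n,q)$ and $\mathrm{PSU}(n,q)$ with $n$ prime.

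\textbf{Main obstacle.} The substantive difficulty is the groups of Lie type of unbounded rank: one must verify, class by class in Aschbacher's scheme, that every maximal subgroup other than the $\mathcal{C}_3$ torus normalisers has even order, and then extract by explicit computation the low-rank and small-field exceptions (the exclusion of $(n,q) = (3,4)$ for $\mathrm{PSL}$, of the three pairs for $\mathrm{PSU}$, and the corrections of \cite{holmes59,holmes71}). This order bookkeeping is essentially the content of \cite[Theorem 2]{liebecksaxl}; by contrast, the alternating and sporadic cases are comparatively routine. For the purposes of the present paper one simply cites this body of work rather than reproving it.
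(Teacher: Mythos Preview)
The paper does not prove this theorem at all: it is stated purely as a citation of \cite[Theorem 2]{liebecksaxl} together with the corrections in \cite{holmes59,holmes71}, and is then used as a black box. Your final paragraph already identifies this correctly, so your proposal is consistent with the paper's treatment.

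Your sketch goes well beyond what the paper does, in that you outline how the cited result is actually established (exhibiting explicit odd-order maximal subgroups for sufficiency, and running an Aschbacher/Liebeck--Seitz case analysis for necessity). As an outline of the content of \cite{liebecksaxl} this is broadly accurate; the one place to be careful is the Lie-type necessity argument, where ruling out parabolic and other geometric subgroups of even order across all classical and exceptional families is genuinely delicate (for instance, your remark that ``reducible non-parabolic subgroups lie inside parabolics'' is not the right mechanism in the orthogonal and unitary cases), and the exceptional-type analysis does not reduce cleanly to Aschbacher classes. But since the paper simply invokes the reference, none of this bookkeeping is required here.
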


\begin{rem}
\label{rem:monsteroddmax}
Let $G$ be the monster group. By \cite[Theorem 2]{liebecksaxl}, any maximal subgroup of $G$ of odd order has shape $59 \sd 29$ or $71 \sd 35$. It was claimed in \cite{holmes59,holmes71} that maximal subgroups of $G$ isomorphic to $\mathrm{PSL}(2,59)$ and $\mathrm{PSL}(2,71)$ were constructed computationally (using data that were not made publicly available). This suggested that no maximal subgroup of shape $59 \sd 29$ or $71 \sd 35$ exists. However, a brand new computational paper \cite{dlpp59} shows (via publicly available code) that, up to conjugacy, $G$ has unique maximal subgroups of shape $59 \sd 29$ and $\mathrm{PSL}(2,71)$, but no subgroup isomorphic to $\mathrm{PSL}(2,59)$. Hence, up to conjugacy, $59 \sd 29$ is the unique maximal subgroup of $G$ of odd order.
\end{rem}

\section{Alternating, sporadic and exceptional simple groups}
\label{sec:altsporexcep}

In this section, we prove the alternating, sporadic and exceptional cases of Theorem~\ref{thm:ncsimple}. Note that we consider the Tits group ${}^2F_4(2)'$ together with the sporadic groups.

\subsection{Alternating groups}
\label{subsec:altnc}

Let $G$ be the alternating group $A_n$ of degree $n \ge 5$. We will prove the following result by considering the standard action of $G$ on the set $\Omega:=\{1,2,\ldots,n\}$.

\begin{thm}
\label{thm:ansnnc}
The graph $\nc(G)$ of the alternating group $G$ of degree $n \ge 5$ is connected with diameter at most $4$, or at most $3$ if $n$ is even.
\end{thm}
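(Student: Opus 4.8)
The plan is to exhibit, for any two vertices $x,y$ of $\nc(G)$, a short path between them by connecting each of $x$ and $y$ to a convenient "standard" vertex and then joining the two standard vertices. The natural standard vertices are elements supported on a small number of points of $\Omega$: for instance, a fixed $3$-cycle $t = (1\,2\,3)$, or a product of two transpositions $(1\,2)(3\,4)$. The key observation is that any such element of small support lies in a point stabiliser (or a stabiliser of a small subset), which is an intransitive maximal subgroup — a proper non-abelian subgroup — so Lemma~\ref{lem:propernc} applies inside it and gives distance at most $2$ between any two of its non-central elements. So the crux is: given an arbitrary $x \in G \setminus Z(G) = G \setminus \{1\}$, find a short path from $x$ to $t$.

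First I would handle the case where $x$ fixes at least one point of $\Omega$, say $x \in G_\omega = A_{n-1}$. Then I choose a $3$-cycle $t'$ inside $G_\omega$ that does not commute with $x$ and does not generate $G$ with $x$ (easy, since $\langle x, t' \rangle \le G_\omega \ne G$); this gives $x \sim t'$ or $d(x,t') \le 2$, and $t'$ is itself a short distance from our fixed $t$ via another intransitive subgroup. The harder case is when $x$ is fixed-point-free. Here I would use the cycle structure of $x$: if $x$ moves all $n$ points, then $x$ has a cycle, and removing/splitting support I can find a subset $\Delta \subsetneq \Omega$ of moderate size that is $x$-invariant and with $|\Delta|, |\Omega \setminus \Delta|$ both at least, say, $2$ or $3$; then $x$ lies in the (maximal, imprimitive-or-intransitive) setwise stabiliser $G_{\{\Delta\}}$, a proper non-abelian subgroup, and I connect $x$ inside it to an element of small support, hence to $t$. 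When $n$ is even I expect to gain one step: a fixed-point-free $x$ has its support partitioned into $x$-cycles, and for $n$ even I can always split $\Omega$ into two $x$-invariant halves of equal even-ish size, landing $x$ directly in an imprimitive maximal subgroup that also contains a small-support element close to $t$ — shaving the path from length $4$ to length $3$. The odd case loses this symmetry (e.g.\ an $n$-cycle with $n$ odd prime has no proper invariant subset of suitable size and lies in few subgroups), which is exactly where the extra step comes from.

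The steps in order: (1) fix the target vertex $t = (1\,2\,3)$ and record that all elements of support at most $n-2$ lie in a common intransitive maximal subgroup, so are pairwise within distance $2$; (2) show every vertex $x$ that fixes a point is within distance $2$ of some small-support element, hence within distance $4$ (resp.\ $3$) of $t$; (3) show every fixed-point-free $x$ either lies in an imprimitive maximal subgroup containing a small-support element, or can be routed through an intransitive subgroup after one intermediate hop, again reaching $t$ within the claimed bound; (4) combine: $d(x,y) \le d(x,t) + d(t,y)$, being careful that consecutive edges are genuinely edges of $\nc(G)$ (non-commuting and non-generating) — the non-generating condition is automatic whenever both endpoints lie in a common proper subgroup, and non-commuting is arranged by choosing the small-support elements with disjoint-enough or overlapping-enough support as needed. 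I would also separately dispose of small $n$ (say $n \le 8$) by the computational method of \S\ref{subsec:prelims} if the generic argument needs $n$ large enough to guarantee the invariant subsets exist.

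The main obstacle I anticipate is the fixed-point-free case for $n$ odd, specifically elements lying in very few maximal subgroups — an $n$-cycle when $n$ is an odd prime is the worst case, since $A_p$ has a maximal cyclic-by-something subgroup of odd order (Theorem~\ref{thm:simpleoddmax}) and such an $x$ may be near-isolated relative to the intransitive subgroups. Handling these requires finding an involution or small-support element that both fails to commute with $x$ and shares a proper subgroup with it; I expect to use that $x$ normalises a point (after one conjugation or one intermediate edge) or to exploit a non-obvious maximal subgroup containing a high-order element. Getting the bound down to exactly $4$ (not $5$) in the odd case, and exactly $3$ in the even case, will hinge on being economical in precisely this step.
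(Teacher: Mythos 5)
The core ingredients you identify --- small-support elements, intransitive stabilisers, and derangements (especially $n$-cycles for $n$ odd) as the hard case --- do match the paper's. But your distance accounting through the fixed hub $t=(1\,2\,3)$ does not deliver the stated bounds. In step (2) you place an arbitrary point-fixing $x$ within distance $2$ of \emph{some} small-support element, and small-support elements are only pairwise within distance $2$ of each other, so this gives $d(x,t)\le 4$ and hence $d(x,y)\le 8$ in step (4). To get diameter $4$ by routing through a single vertex $t$ you would need every vertex to be within distance $2$ of $t$ itself, which is much stronger than anything you establish (and would be genuinely delicate for, say, an $n$-cycle versus one fixed $3$-cycle). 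The paper avoids the hub entirely: the key step in Lemma~\ref{lem:altnonder} is that \emph{any two} non-identity elements $g,k$ each fixing at least one point --- possibly different points $\alpha,\beta$ --- are within distance $2$ of \emph{each other}, because $C_H(g)$ and $C_H(k)$ are proper subgroups of $H:=G_\alpha\cap G_\beta\cong A_{n-2}$, so $H$ contains a common neighbour of $g$ and $k$. Combined with the statement that every derangement is adjacent to a non-derangement (via $2$-subsets of distinct orbits when $\langle x\rangle$ is intransitive, and via an element of $N_{G_{\alpha_1}}(\langle x\rangle)$ not centralising $x$ when $x$ is an $n$-cycle with $n$ odd), this yields $d(x,y)\le 1+2+1=4$.

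The bound $3$ for $n$ even also needs more than you supply: it is not enough that a fixed-point-free $x$ lies in an intransitive maximal subgroup, since you must handle the case where \emph{both} $x$ and $y$ are derangements and exhibit a path of length $3$ directly. The paper does this in Lemma~\ref{lem:dermultorbits} by choosing explicit small-support neighbours $a$ of $x$ and $b$ of $y$ (double transpositions or $3$-cycles built from $2$-subsets of distinct $\langle x\rangle$- and $\langle y\rangle$-orbits, sharing a common point) and then verifying $a\sim b$, re-choosing the points when $a$ and $b$ accidentally commute. Your sketch gestures at this, but the claim that one can ``split $\Omega$ into two $x$-invariant halves'' is not what happens (the invariant subsets are unions of cycles of $x$, of whatever lengths occur), and without the explicit verification that the two small-support elements are adjacent, the length-$3$ path is not established.
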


Recall that a \emph{derangement} of $G$ is an element with no fixed points.

\begin{lem}
\label{lem:derneighb}
Let $x$ be a derangement of $G$ such that $\langle x \rangle$ acts intransitively on $\Omega$. In addition, let $\{\alpha_1,\alpha_2\}$ and $\{\beta_1, \beta_2\}$ be $2$-subsets of distinct orbits of $x$ on $\Omega$. Finally, if $x$ has exactly two orbits on $\Omega$, then let $g:=(\alpha_1,\alpha_2)(\beta_1,\beta_2)$, and otherwise, let $g:=(\alpha_1,\alpha_2,\beta_1)$. Then $x \sim g$.
\end{lem}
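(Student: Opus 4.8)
The plan is to verify directly the two conditions defining adjacency in $\nc(G)$: that $[x,g] \ne 1$ and that $\langle x, g\rangle \ne G$. Throughout, write $\mathcal{O}_\alpha$ and $\mathcal{O}_\beta$ for the orbits of $\langle x\rangle$ on $\Omega$ containing $\{\alpha_1,\alpha_2\}$ and $\{\beta_1,\beta_2\}$; by hypothesis $\mathcal{O}_\alpha \ne \mathcal{O}_\beta$, and since $x$ is a derangement each orbit of $\langle x \rangle$ has size at least $2$. Note that $g$ is an even permutation (a $3$-cycle, or a product of two disjoint transpositions), so $g \in G$, and that $x$ acts on each of $\mathcal{O}_\alpha$ and $\mathcal{O}_\beta$ as a single cycle of full length.

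To show $[x,g] \ne 1$, I would argue by contradiction using the standard observation that an element centralising $x$ must permute the orbits of $\langle x \rangle$ (conjugation by $g$ sends the $\langle x\rangle$-orbit of $\omega$ to that of $g(\omega)$). In the three-cycle case, such a $g$ would send $\mathcal{O}_\alpha$ to the orbit of $g(\alpha_1) = \alpha_2$, which is $\mathcal{O}_\alpha$ itself, and would send $\mathcal{O}_\beta$ to the orbit of $g(\beta_1) = \alpha_1$, which is again $\mathcal{O}_\alpha$; since $\mathcal{O}_\alpha \ne \mathcal{O}_\beta$, the induced permutation of orbits would fail to be injective, a contradiction. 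In the two-cycle case, $\langle x\rangle$ has precisely the two orbits $\mathcal{O}_\alpha, \mathcal{O}_\beta$, and the same observation forces $g$ to fix each of them setwise; then the restriction of $g$ to $\mathcal{O}_\alpha$ (respectively $\mathcal{O}_\beta$) centralises the full-length cycle that $x$ induces there, and so lies in the cyclic group it generates. But $g$ restricts to a transposition on each of $\mathcal{O}_\alpha$ and $\mathcal{O}_\beta$, and as $|\mathcal{O}_\alpha| + |\mathcal{O}_\beta| = n \ge 5$ at least one of these orbits has size at least $3$, whose associated cyclic group contains no transposition --- a contradiction.

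To show $\langle x, g\rangle \ne G$, I would exhibit a partition of $\Omega$ into at least two nonempty parts that is preserved (indeed, fixed part-by-part) by both $x$ and $g$, so that $\langle x, g\rangle$ is intransitive and hence a proper subgroup of the transitive group $G = A_n$. In the two-cycle case, both $x$ and $g$ fix each of the two nonempty sets $\mathcal{O}_\alpha, \mathcal{O}_\beta$ setwise. In the three-cycle case, $g$ moves only points of $\mathcal{O}_\alpha \cup \mathcal{O}_\beta$, so both $x$ and $g$ fix setwise each block of the partition of $\Omega$ whose blocks are $\mathcal{O}_\alpha \cup \mathcal{O}_\beta$ together with all remaining orbits of $\langle x \rangle$; since $\langle x\rangle$ has at least three orbits here, this partition has at least two nonempty blocks. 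Either way $\langle x, g\rangle$ is intransitive, and combining this with the previous paragraph gives $x \sim g$.

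The argument is essentially routine; the only step calling for a little care is excluding, in the two-cycle case, the possibility that the relevant orbit has size $2$ --- on such an orbit $g$ would restrict to the $x$-cycle itself and commute with it --- which is precisely why the hypothesis $n \ge 5$ enters there. A secondary, purely bookkeeping concern is keeping the two definitions of $g$ carefully separated throughout.
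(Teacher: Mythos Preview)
Your proof is correct and follows essentially the same approach as the paper: verify non-commutation by an orbit argument and verify non-generation by intransitivity. The paper's version is considerably more terse --- it simply asserts that, since one orbit has length at least three in the two-orbit case, $x^g \ne x$ in every case --- whereas you spell out the standard orbit-permutation argument explicitly; but the underlying reasoning is the same.
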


\begin{proof}
If $\langle x \rangle$ has exactly two orbits on $\Omega$, then at least one of these orbits has length at least three. Hence, in each case, $x^g \ne x$, and so $[x,g] \ne 1$. Additionally, $\langle x, g \rangle$ acts intransitively on $\Omega$ and is therefore a proper subgroup of $G$. Thus $x \sim g$.
\end{proof}

\begin{lem}
\label{lem:altnonder}
Let $x, y \in G \setminus \{1\}$. Then $d(x,y) \le 4$. If $d(x,y) \ge 3$, then at least one of $x$ and $y$ is a derangement, and if $d(x,y) = 4$, then both are derangements.
\end{lem}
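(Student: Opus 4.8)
The strategy is to reduce the general statement to situations where two elements have a common fixed point, or a common nontrivial orbit-structure, so that Lemma~\ref{lem:propernc} applies to a point stabiliser (which is $A_{n-1}$, a proper non-abelian subgroup for $n \ge 5$, except we must treat $n = 5, 6$ carefully since $A_4$ is not the relevant stabiliser—rather $A_{n-1}$ is non-abelian for $n-1 \ge 5$, i.e.\ $n \ge 6$, and $n = 5$ needs a direct check or a different proper subgroup). First I would dispose of the case where $x$ or $y$ is not a derangement, say $x$ fixes $\alpha \in \Omega$: then $x$ lies in the point stabiliser $G_\alpha \cong A_{n-1}$. If $y$ is also not a derangement, a short argument (moving $y$ by conjugation, or using that any two points lie in a common $G_\alpha$ after adjusting) should give $d(x,y) \le 2$ via a single stabiliser, or $d(x,y)\le 3$ by chaining two stabilisers through a common element; this already forces the ``$d(x,y)\ge 3 \Rightarrow$ one of them is a derangement'' and ``$d(x,y) = 4 \Rightarrow$ both derangements'' conclusions.

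\textbf{Key steps.} (1) If $x$ fixes a point $\alpha$, then $x \in G_\alpha \setminus \{1\}$, and $G_\alpha \cong A_{n-1}$ is non-abelian and proper; note $x \notin Z(G_\alpha) = 1$, so $x$ is a vertex of the induced subgraph on $G_\alpha$. (2) Given $x$ fixing $\alpha$ and $y$ fixing $\beta$: if we can find a vertex $z$ fixing both $\alpha$ and $\beta$ and adjacent to both $x$ and $y$ in the respective stabilisers, we win with $d(x,y) \le 4$; more carefully, pick $z \in G_{\alpha} \setminus Z(G_\alpha)$ with $z \sim x$ (exists since $\diam$ of the induced subgraph on $G_\alpha\setminus Z(G_\alpha)$ is $2$, so $x$ has a neighbour there) and arrange—possibly after replacing $z$ by a conjugate or using that $G_{\alpha\beta}\cong A_{n-2}$ is also non-abelian for $n \ge 6$—to route through $G_\beta$, giving $d(x,y) \le 4$. (3) For the derangement bookkeeping: if neither $x$ nor $y$ is a derangement, the above yields $d(x,y)\le 4$ but in fact one should push to show this is really $\le 2$ or use the structure to get $\le 4$ with the stated refinements; the cleanest route is: $x$ non-derangement $\Rightarrow x\sim w$ for some $w$ with many fixed points, and similarly for $y$, then connect $w$ to the analogous element for $y$ inside a common stabiliser. (4) Handle $n = 5$ (where point stabilisers are $A_4$, non-abelian, so Lemma~\ref{lem:propernc} still applies) and small cases directly or by Magma as flagged in the paper.

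\textbf{The main obstacle.} The delicate part is the precise chaining argument that keeps the distance at $4$ (not $5$) while simultaneously extracting the two refinements about derangements. The issue is that $x \sim w$ requires $w$ to not commute with $x$ \emph{and} $\langle x, w\rangle \ne G$; ensuring the intermediate elements are genuinely non-adjacent-forcing (i.e.\ that the needed edges exist) while all lying in a controlled proper subgroup takes care. I expect the heart of it is: when $x$ is not a derangement, show $x$ is adjacent to some element $w$ that fixes \emph{at least two} points (or has some other strong intransitivity), so that $w$ and the corresponding $w'$ for $y$ both lie in a common $G_{\alpha}$, collapsing the middle of the path; Lemma~\ref{lem:derneighb} will supply the explicit adjacent elements in the derangement cases that arise, and Lemma~\ref{lem:propernc} does the rest. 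The case analysis on cycle types of $x$ (number and lengths of orbits) is where the routine-but-fiddly work lies, and is what I would defer to the full proof.
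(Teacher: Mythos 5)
Your overall architecture matches the paper's (non-derangements via point stabilisers; derangements reduced to non-derangements by finding an adjacent non-derangement), but there are two genuine gaps. First, the refinements force you to prove that \emph{any two non-derangements are at distance at most $2$}, and your proposal never delivers this: you waver between ``$d(x,y)\le 2$ via a single stabiliser, or $d(x,y)\le 3$ by chaining two stabilisers'', and your ``cleanest route'' in step (3) produces a path of length up to $4$. A bound of $3$ or $4$ for two non-derangements is useless for the claims ``$d(x,y)\ge 3$ implies one is a derangement'' and ``$d(x,y)=4$ implies both are''. The missing idea is a single middle vertex: for $x\in G_\alpha\setminus H$ and $y\in G_\beta\setminus H$ with $H:=G_\alpha\cap G_\beta\cong A_{n-2}$ maximal in each stabiliser, the centralisers $C_H(x)$ and $C_H(y)$ are proper in $H$ (else $\langle H,x\rangle=G_\alpha$ would centralise $x$, forcing $x\in Z(G_\alpha)=1$), so, since no group is the union of two proper subgroups, there is $h\in H$ commuting with neither, and $x\sim h\sim y$ inside $G_\alpha$ and $G_\beta$ respectively. (Your worry about $n=5,6$ is a non-issue: $A_4$ is non-abelian, and the argument still works when $H\cong A_3$, since $C_H(x)<H$ then forces $C_H(x)=1$.)

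Second, you defer all derangements to Lemma~\ref{lem:derneighb}, but that lemma assumes $\langle x\rangle$ acts \emph{intransitively}. When $n$ is odd and $x$ is an $n$-cycle, $\langle x\rangle$ is transitive and the lemma says nothing; this is precisely the case responsible for the bound being $4$ rather than $3$ for odd $n$, so it cannot be waved away. One must still show such an $x$ is adjacent to a non-derangement, e.g.\ by producing $t\in N_{G_{\alpha_1}}(\langle x\rangle)$ with $x^t\ne x$: since $\phi(n)-1>1$, there are $s,s'\in N_{(S_n)_{\alpha_1}}(\langle x\rangle)$ with $x^s=x^{-1}$ and $x^{s'}=x^i$ for some $i\in\{2,\dots,n-2\}$, and at least one of $s,s',ss'$ is even and conjugates $x$ to a different power; then $\langle x,t\rangle\le N_G(\langle x\rangle)<G$ and $[x,t]\ne 1$. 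Without both of these ingredients neither the refinements nor the bound $d(x,y)\le 4$ itself follows from your outline.
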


\begin{proof}
Let $\alpha$ and $\beta$ be distinct points in $\Omega$. The point stabiliser $G_\alpha \cong A_{n-1}$ is a maximal subgroup of $G$. As $Z(G_\alpha) = 1$, it follows from Lemma~\ref{lem:propernc} that $d(g,h) \le 2$ for all $g,h \in G_\alpha \setminus \{1\}$. Additionally, $H:=G_\alpha \cap G_\beta \cong A_{n-2}$ is maximal in each of $G_\alpha$ and $G_\beta$. Let $g \in G_\alpha$ and $k \in G_\beta$, such that $g, k \notin H$. Then $C_H(g) < H$, as otherwise $\langle H, g \rangle = G_\alpha$ would centralise $g$. Similarly, $C_H(k) < H$. Hence some element of $H$ centralises neither $g$ nor $k$, and it follows that $d(g,k) \le 2$.

To complete the proof, we will show that each derangement $x \in G$ is adjacent in $\nc(G)$ to some non-derangement. This follows immediately from Lemma~\ref{lem:derneighb} if $\langle x \rangle$ is intransitive on $\Omega$. Assume therefore that $x$ is an $n$-cycle $(\alpha_1, \ldots, \alpha_n)$, with $\alpha_i \in \Omega$ for each $i$, and $n$ is odd. It suffices to prove that there exists $t \in N_{G_{\alpha_1}}(\langle x \rangle)$ with $x^t \ne x$; it will then follow that $\langle x, t \rangle \le N_G(\langle x \rangle) < G$ and $[x,t] \ne 1$, so that $x \sim t$. For each $n$-cycle $r \in \langle x \rangle$ with $r \ne x$, there exists an element $s \in N_{(S_n)_{\alpha_1}}(\langle x \rangle)$ such that $x^s = r$. There are $\phi(n)-1 > 1$ of these $n$-cycles, where $\phi$ is Euler's totient function. We deduce that there exist $s, s' \in N_{(S_n)_{\alpha_1}}(\langle x \rangle)$ such that $x^s = x^{-1}$ and $x^{s'} = x^i$ for some $i \in \{2,\ldots,n-2\}$. Since $x^{ss'} = x^{-i} \ne x$, we can choose $t \in G \cap \{s,s',ss'\} \ne \varnothing$.
\end{proof}

We will write $\mathrm{supp}(g)$ to denote the support $\{\alpha \in \Omega \mid \alpha^g \ne \alpha\}$ of an element $g \in G$.

\begin{lem}
\label{lem:dermultorbits}
Let $x$ and $y$ be derangements of $G$ such that each of $\langle x \rangle$ and $\langle y \rangle$ acts intransitively on $\Omega$. Then $d(x,y) \le 3$.
\end{lem}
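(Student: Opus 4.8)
The plan is to find a single vertex $g$ (a non-derangement) adjacent to both $x$ and $y$, which would give $d(x,y) \le 2$, and otherwise produce a path of length $3$. Since $\langle x \rangle$ is intransitive, pick $2$-subsets from two distinct $x$-orbits and form the element $g_x$ supplied by Lemma~\ref{lem:derneighb}, so that $x \sim g_x$; similarly obtain $g_y$ with $y \sim g_y$. Because $x$ and $y$ are derangements while $g_x, g_y$ are supported on at most four points, both $g_x$ and $g_y$ fix at least one common point $\alpha$ (as long as $n \ge 9$; small cases can be checked directly or folded into the general argument by choosing the supports carefully). The main idea is then to choose the $2$-subsets defining $g_x$ and $g_y$ so that their supports are \emph{equal} or at least lie in a common proper subgroup: if I can arrange $g_x$ and $g_y$ to both lie in the point stabiliser $G_\alpha \cong A_{n-1}$, then $d(g_x, g_y) \le 2$ by Lemma~\ref{lem:propernc} (using that $Z(G_\alpha) = 1$), and concatenating gives $x \sim g_x$, $d(g_x,g_y)\le 2$, $g_y \sim y$ — but that is length $4$, not $3$.

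So I need to be more economical. The better approach is to pick the supports of $g_x$ and $g_y$ to \emph{overlap in exactly one point} or to be arranged so that $g_x$ and $g_y$ themselves do not commute and generate a proper (intransitive) subgroup, making $g_x \sim g_y$ directly. Concretely: since each of $x$ and $y$ has at least two orbits, I have freedom in choosing the defining $2$-subsets. I would select points $\alpha_1,\alpha_2$ in one $x$-orbit and $\beta_1,\beta_2$ in another, and select the $y$-data reusing some of these same points $\alpha_i, \beta_j$ (possible because $x,y$ being derangements does not constrain which points lie in which orbits), so that $\langle g_x, g_y \rangle$ moves a bounded number of points — at most, say, five or six — hence is intransitive on $\Omega$ when $n$ is large, giving $\langle g_x, g_y\rangle \ne G$; and choosing the $2$-subsets in "general position" ensures $[g_x, g_y] \ne 1$. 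Then $x \sim g_x \sim g_y \sim y$ is a path of length $3$, as required. For the finitely many small $n$ (roughly $5 \le n \le 8$ or wherever the counting is tight), I would verify the bound by hand or cite a direct check, noting that intransitive derangements only exist once $n \ge 4$ and the relevant cycle types are few.

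The hard part will be the bookkeeping needed to guarantee simultaneously that (a) $g_x \sim x$ and $g_y \sim y$ (which Lemma~\ref{lem:derneighb} handles, but only for the \emph{specific} form of $g$ it prescribes, so the $2$-subset choices must respect its case distinction on whether $x$ resp.\ $y$ has exactly two orbits), (b) $g_x$ and $g_y$ do not commute, and (c) $\langle g_x, g_y\rangle$ is a proper subgroup. Item (c) is cleanest via intransitivity, which forces a lower bound on $n$; the residual small cases and the case where $x$ or $y$ has exactly two orbits (so the prescribed $g$ is a double transposition with support of size exactly $4$, reducing flexibility) are where care is needed. I expect the cleanest write-up fixes a common fixed point $\alpha$ of all the auxiliary elements, places every relevant support inside $\Omega \setminus \{\alpha\}$, and then either (i) gets $g_x \sim g_y$ by a direct non-commuting-and-intransitive argument, or (ii) when forced, inserts one intermediate vertex inside $G_\alpha$ but arranges $g_x$ and one of $x$'s or $y$'s own orbit-permutations to collapse the path back to length $3$; balancing these alternatives against the orbit-count of $x$ and $y$ is the principal obstacle.
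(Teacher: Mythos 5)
Your overall strategy---use Lemma~\ref{lem:derneighb} to produce $g_x \sim x$ and $g_y \sim y$ with overlapping supports, and then show $g_x \sim g_y$ directly to get a path of length $3$---is exactly the paper's. However, as written the proposal has two genuine gaps, both of which you flag yourself but do not resolve. First, when $x$ (or $y$) has exactly two orbits, Lemma~\ref{lem:derneighb} forces the auxiliary element to be a double transposition, and your route to properness of $\langle g_x, g_y\rangle$ via intransitivity (``support at most five or six, hence intransitive for $n$ large'') really does fail for small $n$: with $n=5$ both $x$ and $y$ have exactly two orbits, and two double transpositions sharing one point of their supports can generate a transitive $D_{10}$ (e.g.\ $(1\,2)(3\,5)$ and $(1\,3)(4\,5)$). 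The missing idea is that two non-commuting involutions generate a dihedral group, which is never $A_n$ for $n\ge 5$; this settles properness in the two-orbit cases with no lower bound on $n$ and no residual hand-checking. In the cases where both auxiliary elements can be taken to be $3$-cycles, having at least three orbits forces $n\ge 6$ while the combined support has size at most $5$, so there intransitivity does work cleanly---but you need to separate these cases rather than defer them.

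Second, ``choosing the $2$-subsets in general position ensures $[g_x,g_y]\ne 1$'' is not an argument, and the commuting case cannot simply be wished away. The paper handles it explicitly: if the two $3$-cycles commute and share a point they are inverse powers of one another, so $g_x \sim y$ already and $d(x,y)\le 2$; if the two double transpositions commute, one uses the fact that some $x$-orbit has length at least $3$ to replace one point of the support and break the commutation. Without the dihedral observation and an explicit treatment of the commuting configurations, the proof does not close.
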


\begin{proof}
Let $\{\alpha_1,\alpha_2\}$ and $B:=\{\beta_1, \beta_2\}$ be $2$-subsets of distinct orbits of $\langle x \rangle$ on $\Omega$, such that $|\alpha_1^{\langle x \rangle}| \ge |\beta_1^{\langle x \rangle}|$, and let $\{\gamma_1,\gamma_2\}$ and $D:=\{\delta_1,\delta_2\}$ be $2$-subsets of distinct orbits of $\langle y \rangle$ on $\Omega$. We may assume that $\alpha_1 = \gamma_1$. If $n = 5$, then each of $\langle x \rangle$ and $\langle y \rangle$ has exactly two orbits on $\Omega$.

\medskip

\noindent \textbf{Case (a)}: $\langle x \rangle$ and $\langle y \rangle$ each have exactly two orbits on $\Omega$. We see from Lemma~\ref{lem:derneighb} that $x \sim a:=(\alpha_1,\alpha_2)(\beta_1,\beta_2)$ and $y \sim b:=(\alpha_1,\gamma_2)(\delta_1,\delta_2)$. In particular, if $a = b$, then $d(x,y) \le 2$.

Assume therefore that $a \ne b$, and hence either $\alpha_2 \ne \gamma_2$ or $B \ne D$. If $[a,b] = 1$, then either $\gamma_2 = \alpha_2$ and $B \cap D = \varnothing$; or $\{\gamma_2\} \cup D = \{\alpha_2\} \cup B$. As $|\alpha_1^{\langle x \rangle}| \ge 3$, we can repeat the argument with $\alpha_2$ replaced by an element of $\alpha_1^{\langle x \rangle} \setminus \{\alpha_1,\alpha_2\}$ to obtain $[a,b] \ne 1$. Since $\langle a, b \rangle$ is dihedral, $(x,a,b,y)$ is a path in $\nc(G)$ and $d(x,y) \le 3$.

\medskip

\noindent \textbf{Case (b)}: $\langle x \rangle$ and $\langle y \rangle$ each have at least three orbits on $\Omega$. By Lemma~\ref{lem:derneighb}, $x \sim a:=(\alpha_1,\alpha_2,\beta_1)$ and $y \sim b:=(\alpha_1,\gamma_2,\delta_1)$. Observe that $t:={\mathrm{supp}(a) \cup \mathrm{supp}(b)} \le 5 < n$, and so $\langle a, b \rangle < G$. Hence if $[a,b] \ne 1$, then $(x,a,b,y)$ is a path in $\nc(G)$, and so $d(x,y) \le 3$. If instead $[a,b] = 1$, then $t = 3$ and $b \in \{a,a^{-1}\}$, hence $a \sim y$ and $d(x,y) \le 2$.

\medskip

\noindent \textbf{Case (c)}: Exactly one of $\langle x \rangle$ and $\langle y \rangle$ has exactly two orbits on $\Omega$. We may assume that $\langle x \rangle$ has exactly two orbits on $\Omega$. Lemma~\ref{lem:derneighb} shows that $x \sim a:=(\alpha_1,\alpha_2)(\beta_1,\beta_2)$ and $y \sim b:=(\alpha_1,\gamma_2,\delta_1)$. Observe that $[a,b] \ne 1$ and $\langle a,b \rangle$ is intransitive. Hence $a \sim b$ and $d(x,y) \le 3$.
\end{proof}

\begin{proof}[Proof of Theorem~\ref{thm:ansnnc}]
Lemma~\ref{lem:altnonder} shows that $\nc(G)$ is connected with diameter at most $4$. Furthermore, if $n$ is even, then each derangement of $G$ generates a subgroup that acts intransitively on $\Omega$. Hence Lemmas~\ref{lem:altnonder} and~\ref{lem:dermultorbits} yield $\diam(\nc(G)) \le 3$.
\end{proof}

Magma calculations show that $\diam(\nc(A_n)) = 2$ whenever $5 \le n \le 10$. We do not know if there exists $n > 10$ such that $\diam(\nc(A_n)) > 2$.

\subsection{Sporadic groups}
\label{subsec:sporadicnc} Let $G$ be a sporadic finite simple group, or the Tits group ${}^2F_4(2)'$. In this subsection, we implicitly use \cite{ATLAS,wilsonsporadic} to determine information about maximal subgroups, conjugacy classes and centralisers in $G$. In particular, all maximal subgroups of the sporadic groups are listed in \cite{wilsonsporadic} (up to conjugacy), with the exceptions of the maximal subgroups of the monster group $\mathbb{M}$ isomorphic to $\mathrm{PGL}(2,13)$ and $\mathrm{Aut}(\mathrm{PSU}(3,4))$ constructed in \cite{dlp}, and the maximal subgroups isomorphic to $59 \sd 29$ constructed in \cite{dlpp59} (see the introductions of those last two papers for further details). Additionally, the subgroup $\mathrm{PSL}(2,59)$ listed in \cite{wilsonsporadic} is not in fact a subgroup of $\mathbb{M}$; see Remark~\ref{rem:monsteroddmax}.

\begin{thm}
\label{thm:spordiamnc}
The graph $\nc(G)$ of the sporadic group or Tits group $G$ is connected with diameter $4$ or $5$ if $G = \mathbb{M}$, and at most $4$ otherwise. If $G \in \{\mathrm{M}_{11},\mathrm{M}_{12},\mathrm{M}_{22},\mathrm{M}_{23},\mathrm{J}_1,\mathrm{J}_2,\mathbb{B}\}$, then $\diam(\nc(G))$ is given in Table~\ref{table:simplediams}.
\end{thm}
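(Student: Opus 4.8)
The plan is to split the argument into two parts: the general upper bound $\diam(\nc(G)) \le 4$, and the exact-value computations for the seven listed groups.

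\textbf{The general bound.} Every maximal subgroup of a sporadic group or the Tits group has even order, since the only simple groups with a maximal subgroup of odd order are listed in Theorem~\ref{thm:simpleoddmax}, and among the sporadic groups this only includes $\mathrm{M}_{23}$, $\mathrm{Th}$ and $\mathbb{B}$ — and for each of these, one checks directly (using \cite{wilsonsporadic,ATLAS}) that they also possess a maximal subgroup of even order, indeed all but one maximal subgroup has even order. So Proposition~\ref{prop:ncsimpleeven} already gives $\diam(\nc(G)) \le 5$, and I need to improve this to $4$. The key is the ``moreover'' clause of Lemma~\ref{lem:ncmaxsimple}: if $L$ is a maximal subgroup containing a vertex $x$ with $d(x,a)\le 1$ for some involution $a \in L$, then $d(x,y)\le 4$ for any vertex $y$ in any maximal subgroup $M$ of even order. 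By Proposition~\ref{prop:isolvertnc}, every vertex $x$ lies in $K\setminus Z(K)$ for some maximal $K$; what I must verify is that $x$ is at distance at most $1$ from some involution of a maximal subgroup of $G$. Concretely: given any $x \in G \setminus\{1\}$, I want a maximal subgroup $L$ with $x \in L\setminus Z(L)$ (hence $Z(L)=1$ since $G$ is simple and $L\ne G$ typically, or at worst $x$ noncentral) such that $L$ contains an involution $a$ not commuting with... no — I need $d(x,a)\le 1$, meaning either $x=a$ (impossible unless $x$ is an involution, then take $a=x$) or $x \sim a$, i.e. $[x,a]\ne 1$ and $\langle x,a\rangle \ne G$. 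The latter holds automatically if $a,x$ both lie in the proper subgroup $L$. So it suffices to show: every non-involution vertex $x$ lies in a proper subgroup $H$ of $G$ containing an involution $b$ with $[x,b]\ne 1$. Since $x$ lies in some maximal subgroup $M$ (of even order, as all are), and $M$ has even order so contains an involution, I only need that $M$ contains an involution not centralising $x$ — equivalently $C_M(x) \ne M$, i.e. $M$ does not centralise $x$, which is immediate since $Z(M)=1$ when... hmm, $M$ may have nontrivial centre. The honest statement: by Lemma~\ref{lem:simplenoncent}(ii) applied with $L=M$ and a second maximal subgroup, $M$ contains an involution $s\notin Z(M)$; if $[x,s]\ne 1$ we are done. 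If $[x,s]=1$ for every such involution, then (arguing as in Lemma~\ref{lem:ncmaxsimple}) $x$ centralises the subgroup generated by the involutions of $M$, forcing a contradiction with simplicity unless that subgroup is central — but the subgroup generated by all involutions of $M$ is normal in $M$ and, being a proper subgroup situation, one pushes through. I would in fact just invoke Lemma~\ref{lem:ncmaxsimple} directly: for \emph{each} vertex $x$, Proposition~\ref{prop:isolvertnc} gives $x\in L\setminus Z(L)$ with $|L|$ even, and I claim $L$ contains an involution $a$ with $d(x,a)\le 1$. The proof of Lemma~\ref{lem:ncmaxsimple}'s first paragraph constructs, from $\mathcal C$ = involutions in $L\setminus Z(L)$, an element $c$ at distance $\le 2$ from $x$; to get the ``moreover'' gain I need $d(x,c)\le 1$ for \emph{some} such $c$, and the second paragraph of that proof analyses precisely the obstruction (the set $\mathcal A$ of involutions at distance $\le 1$ from $x$ landing inside $Z(M)$) and still concludes $d(x,y)\le 3\le 4$. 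So the general bound $\diam(\nc(G))\le 4$ follows from Proposition~\ref{prop:isolvertnc} and Lemma~\ref{lem:ncmaxsimple} with no further case analysis needed beyond confirming every maximal subgroup has even order.

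\textbf{The exact values.} For the seven groups $\mathrm{M}_{11},\mathrm{M}_{12},\mathrm{M}_{22},\mathrm{M}_{23},\mathrm{J}_1,\mathrm{J}_2,\mathbb{B}$ I would do the following. For $\mathrm{M}_{11},\mathrm{M}_{12},\mathrm{M}_{22},\mathrm{M}_{23},\mathrm{J}_1,\mathrm{J}_2$ these are small enough (orders up to about $10^7$ for $\mathrm{J}_2$, and $\mathrm{M}_{23}$ is explicitly flagged in \S\ref{subsec:prelims} as one of the largest groups handled computationally) to compute $\diam(\nc(G))$ directly in Magma by the method of \S\ref{subsec:prelims}: take representatives $X'$ of non-central cyclic subgroups up to conjugacy, for each $x$ a set $Y_x$ of representatives up to $C_G(x)$-conjugacy, and compute $\max d(x,y)$ via maximal-subgroup-intersection and centraliser computations, using that $\mathrm{Aut}(\nc(G))\ge\mathrm{Aut}(G)$. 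This yields $2$ for $\mathrm{M}_{11},\mathrm{M}_{12},\mathrm{M}_{22},\mathrm{J}_2$ and $3$ for $\mathrm{M}_{23},\mathrm{J}_1$, as in Table~\ref{table:simplediams}. The genuinely hard case is $\mathbb B$, the baby monster, of order $\approx 4\times 10^{33}$ — far too large for the brute-force method. Here one must prove $\diam(\nc(\mathbb B))=4$ by hand. The upper bound $\le 4$ is the general bound already established. For the lower bound $\ge 4$, the strategy is: exhibit two vertices $x,y$ with $d(x,y)=4$. The natural candidates come from the odd-order maximal subgroup issue — $\mathbb B$ is one of the rare groups with a maximal subgroup $M$ of odd order, namely $M\cong 47{:}23$ (order $1081$). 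Take $x$ to be a generator of the cyclic subgroup of order $47$ (or an element of order $23$) inside $M$; this vertex is ``hard to reach'' because $x$ lies in very few proper subgroups. One shows: (1) $C_{\mathbb B}(x)=\langle x\rangle$ has odd order, so $x$ is adjacent only to elements that fail to commute with it yet fail to generate $\mathbb B$ with it — these lie in the (few) maximal subgroups containing a conjugate-relevant copy; in fact the only maximal subgroup containing $x$ with $x$ noncentral is $M=47{:}23$ itself (need to check the maximal subgroup list of $\mathbb B$ in \cite{wilsonsporadic}); (2) hence the neighbourhood of $x$ is contained in $M\setminus\{1\}$ minus powers of $x$; (3) similarly pick $y$ related to the $23$-part, or pick $y$ a $47$-element in a different ``position'', and trace that any path $x - p - q - y$ is obstructed, giving $d(x,y)\ge 4$. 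Combined with $\le 4$, this gives $=4$. The analogous claim for $\mathrm{PSU}(7,2)$ (the other group listed as diameter exactly $4$) is handled later in \S\ref{sec:nclinearuni}, not here, so for this theorem only $\mathbb B$ needs the hand argument.

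\textbf{Main obstacle.} The hard part is the lower bound $\diam(\nc(\mathbb B))\ge 4$: one cannot compute in $\mathbb B$ directly, so the argument must be structural, pinning down exactly which maximal subgroups contain a given element of order $47$ or $23$ (equivalently, controlling the overgroups of $\langle x\rangle$), showing the neighbourhood of such an $x$ in $\nc(\mathbb B)$ is essentially confined to a single small maximal subgroup, and then verifying — via the known maximal subgroup structure and character-table data in \cite{ATLAS,wilsonsporadic} — that no length-$3$ path connects two carefully chosen such vertices. Establishing that the only maximal subgroup in which $x$ is noncentral is $47{:}23$, and that the two chosen vertices genuinely lie in no common ``short bridge'', is where all the real work sits; everything else (the general $\le 4$ bound, and the six small-group computations) is routine given the machinery already developed.
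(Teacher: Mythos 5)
Your plan for the general bound has a genuine gap at the point you wave through. The ``moreover'' clause of Lemma~\ref{lem:ncmaxsimple} only yields $d(x,y)\le 4$ if the maximal subgroup $L$ with $x\in L\setminus Z(L)$ contains an involution $a$ with $d(x,a)\le 1$; if the set $\mathcal A$ in that proof is \emph{empty}, the lemma gives only $5$. Your argument for why $\mathcal A\ne\varnothing$ --- that if $x$ commutes with every involution of $M$ then $x$ centralises $\langle T_M\rangle\trianglelefteq M$ and ``one pushes through'' to a contradiction with simplicity --- does not work: unlike in Lemma~\ref{lem:simplenoncent}\ref{simplenoncent3}, here $x\in M$, so $\langle M,x\rangle=M\ne G$ and no contradiction arises when $\langle T_M\rangle<M$. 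The paper isolates exactly this claim as Proposition~\ref{prop:sporinvol} and proves it by a GAP computation over all maximal subgroups (checking $|T_M|>|C_M(x)|$ or $\langle T_M\rangle=M$), which leaves six exceptional pairs $(G,|x|)$ --- e.g.\ $\mathrm{Co}_1$ with $|x|=3$, $\mathbb{M}$ with $|x|=41$ --- each needing an individual argument. This case analysis is not optional.

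The second and more serious gap is that your ``general bound'' tacitly assumes every vertex is non-central in some maximal subgroup of \emph{even} order, and for $\mathbb B$ this fails: an element $h$ of order $47$ lies only in maximal subgroups $47\sd 23$ of odd order, so Lemma~\ref{lem:ncmaxsimple} never applies to $h$ and your argument does not establish $\diam(\nc(\mathbb B))\le 4$ at all. This is where the bulk of the paper's work sits: Lemma~\ref{lem:babyneighb} proves $d(s,y)\le 3$ for every order-$23$ element $s$ and every $y$ (a lengthy analysis using $\mathrm{Fi}_{23}$ and $\nonsplit{2^{1+22}}{\mathrm{Co}_2}$ and Lemma~\ref{lem:babyconj}), and then $h\sim s$ gives $d(h,g)\le 4$. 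A parallel check is needed for $\mathrm{Th}$: its order-$31$ elements happen to also lie in a maximal $\nonsplit{2^5}{\mathrm{L}_5(2)}$, which must be verified. You have instead devoted your $\mathbb B$ discussion entirely to the lower bound $\ge 4$, sketching a by-hand distance-$4$ pair that is not carried out; the paper obtains $\diam(\nc(\mathbb B))\ge 4$ essentially for free from the fact that the intersection graph of $\mathbb B$ has diameter $5$, via Lemma~\ref{lem:intncgraphs}. The Magma computations for the six small groups match the paper.
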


A few of the proofs in this subsection, where specified, involve straightforward computations in GAP \cite{GAP} that utilise the Character Table Library \cite{GAPchar}, with runtimes of at most a few seconds. This library contains the necessary information regarding all maximal subgroups of $G$, except for certain maximal subgroups of the monster group.

\begin{prop}
\label{prop:spormaxz}
Each maximal subgroup of $G$ has a centre of order at most $2$.
\end{prop}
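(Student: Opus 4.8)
The plan is to verify Proposition~\ref{prop:spormaxz} group-by-group for the $26$ sporadic groups and the Tits group, drawing on the explicit lists of maximal subgroups in \cite{ATLAS,wilsonsporadic} (together with \cite{dlp} for the two extra maximal subgroups of the monster). For each sporadic group $G$, I would run through its maximal subgroups $M$ and examine the structure of $M$ to bound $|Z(M)|$. The vast majority of maximal subgroups of sporadic groups are either almost simple, or have a simple component in a product, or are $p$-local subgroups whose structure is recorded in the \textsc{Atlas}; in all such cases one reads off $Z(M)$ directly. Concretely, if $M$ is almost simple then $Z(M) = 1$; if $M = T.A$ or $M$ involves a nonabelian simple normal subgroup with trivial centraliser, then again $Z(M) = 1$; and for the $p$-local maximal subgroups one checks the displayed extension structure (e.g.\ $2^{1+8}{.}\Omega_8^+(2)$ has centre of order $2$, $3^{1+2}{:}\cdots$ has trivial centre since the central $3$ is not centralised by the top group, etc.).

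The key steps, in order, are: (1) dispose of the almost simple maximal subgroups and those with a self-centralising nonabelian simple normal subgroup, for which $Z(M)=1$ is immediate; (2) treat the remaining maximal subgroups, which are solvable-by-(almost simple) or $p$-local, by inspecting their \textsc{Atlas} structure symbols, noting that a nontrivial centre can only come from the ``bottom'' of the extension and checking in each case whether the top group acts nontrivially on it; (3) for the extraspecial-type normalisers $p^{1+2k}.X$ that occur (these are the main source of nontrivial centres), observe $Z(M)$ has order $p$, which is $\le 2$ exactly when $p = 2$, and confirm that the finitely many cases with $p$ odd (such as $3^{1+2}$-, $5^{1+2}$-normalisers in various groups) have the central subgroup acted on nontrivially by $X$, forcing $Z(M)=1$; (4) handle the two monster maximal subgroups from \cite{dlp}, namely $\mathrm{PGL}(2,13)$ and $\mathrm{Aut}(\mathrm{PSU}(3,4))$, both of which are almost simple and hence centreless. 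A convenient way to organise steps (1)--(3) uniformly is to use the \textsf{GAP} Character Table Library \cite{GAP,GAPchar}, as the author does elsewhere in this subsection: for each sporadic $G$ and each maximal subgroup $M$ one can compute the centre of the character table of $M$ (the intersection of kernels is not needed --- one just locates the faithful linear characters or, equivalently, computes $|Z(M)|$ from the table), and verify it has order $\le 2$; this is a short computation for all $G$ except the monster, for which the remaining (non-library) maximal subgroups are dealt with by hand as above.

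The main obstacle is not conceptual but organisational and verificational: ensuring that \emph{every} maximal subgroup of \emph{every} sporadic group is accounted for, including the subtle cases. The genuinely delicate points are the $p$-local maximal subgroups with $p$ odd and a nontrivial $p$-core, where one must be certain that no $p$-central element survives in the centre of the whole subgroup --- this requires knowing the action of the complement on the $p$-core, which the \textsc{Atlas} structure symbols encode but which one must read carefully (e.g.\ distinguishing $3^{1+2}{:}2A_4$ from $3 \times 3^{1+2}{:}\cdots$). A secondary subtlety is the monster, since its maximal subgroup list was completed only via \cite{dlp} and the earlier literature; here I would explicitly cite the classification and then note that among the full list the only maximal subgroups with nontrivial centre are the $2$-local ones $2^{1+24}.\mathrm{Co}_1$, $2^{2+11+22}.(\cdots)$, etc., each with centre of order $2$, while all odd-local and almost simple maximal subgroups are centreless. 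Once the case analysis is complete, the proposition follows by combining the $|Z(M)| \le 2$ verdicts over all $M$ and all $G$, and I would present the bulk of it as a table reference plus the short \textsf{GAP} check, spelling out only the handful of cases (odd extraspecial normalisers, the two \cite{dlp} subgroups) that need a word of justification.
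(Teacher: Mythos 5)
Your proposal is correct and takes essentially the same route as the paper: an exhaustive verification over the known lists of maximal subgroups, carried out via the \textsf{GAP} Character Table Library for everything it covers. The only difference is in the monster's maximal subgroups missing from that library, where the paper avoids structural inspection entirely by noting that no element of $\mathbb{M}$ has centraliser of order $|M|$ (so a nontrivial $z \in Z(M)$ would force $M = C_{\mathbb{M}}(z)$ by maximality, a contradiction), whereas you inspect the \textsc{Atlas}-style structure symbols by hand; both work.
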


\begin{proof}
For each maximal subgroup $M$ of $\mathbb{M}$ not included in the GAP Character Table Library, no element of $\mathbb{M}$ has a centraliser of order $|M|$, hence $Z(M) = 1$. For each other maximal subgroup of each $G$, we verify the result using GAP.
\end{proof}

In light of Lemma~\ref{lem:ncmaxsimple}, it will be useful to determine neighbours of involutions in $\nc(G)$.

\begin{prop}
\label{prop:sporinvol}
Let $x$ be an element of $G$ of order at least $3$, and suppose that $x$ lies in a maximal subgroup of $G$ of even order. Then $x$ is adjacent in $\nc(G)$ to some involution of $G$.
\end{prop}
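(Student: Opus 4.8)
The plan is to exhibit, for each such $x$, an involution $t \in G$ with $[x,t] \ne 1$ and $\langle x, t \rangle \ne G$. Let $M$ be a maximal subgroup of $G$ of even order containing $x$; since $|M|$ is even, $M$ contains at least one involution. The natural strategy is to look for an involution \emph{inside $M$} that fails to centralise $x$: if $t \in M$ is such an involution, then $\langle x, t \rangle \le M < G$ automatically, and we are done. So first I would argue that, unless $x$ centralises every involution of $M$, such a $t$ exists and the proof is immediate. If $x$ does centralise every involution of $M$, then $x$ centralises the subgroup $Q_M \le M$ generated by the involutions of $M$, which is a characteristic (hence normal) subgroup of $M$; this is the same device used in the proof of Lemma~\ref{lem:simplenoncent}\ref{simplenoncent3}. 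Since $x$ has order at least $3$, $x \notin Q_M$ is not forced, but in any case $Q_M$ is a proper normal subgroup of $M$ being centralised by $x$, which strongly constrains the structure of $M$.

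From here I would split on the structure of $M$. If $Q_M \ne 1$ (which holds whenever $M$ has even order unless all its involutions are somehow absent — impossible — so $Q_M \ne 1$ always), and $x$ centralises $Q_M$, then $x$ normalises $Q_M$ and $\langle M, x\rangle = M$ trivially, so the only obstruction is producing the \emph{non-commuting} involution, not the non-generating condition. Thus the real content is: if every involution of $M$ commutes with $x$, derive a contradiction or handle it directly. One clean route: by Proposition~\ref{prop:spormaxz}, $|Z(M)| \le 2$, so $Q_M \not\le Z(M)$ (as $|Q_M| \ge 2$ and if $Q_M = Z(M)$ then $M/C_M(Q_M)$... ) — more usefully, $C_M(Q_M) \trianglelefteq M$, and since $C_G(Q_M) \cap M$ is normal in $M$ and contains $x$, either $C_M(Q_M) = M$ (so $Q_M \le Z(M)$, forcing $Q_M$ to have order $\le 2$, i.e. $M$ has a unique involution, so $M$ has cyclic or generalised quaternion Sylow $2$-subgroup) or $C_M(Q_M) < M$. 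In the latter case $x \in C_M(Q_M) < M$, and I would then pick an involution $t \in M \setminus C_M(Q_M)$... but that $t$ need not fail to commute with $x$. So instead, in the case $Q_M \le Z(M)$, i.e. $M$ has a unique involution $z$ with $z \in Z(M)$: here I would instead work in $G$, using that $C_G(z)$ is a proper subgroup (as $G$ is simple) containing $z$ and, since $z \in Z(M)$, equal to $M$ by Proposition~\ref{prop:spormaxz}-type reasoning or by maximality; then I would conjugate to produce a second involution, as in Lemma~\ref{lem:simplenoncent}\ref{simplenoncent1}.

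Concretely, I expect the cleanest argument to be: if $M$ has more than one involution, then $Q_M \not\le Z(M)$ (since $|Z(M)| \le 2$ would force $|Q_M| = 2$), so $x$ cannot centralise all of $M$'s involutions — otherwise $x$ centralises $Q_M \trianglelefteq M$, giving $Q_M \trianglelefteq \langle M, x \rangle$, but $\langle M, x\rangle = M$ so this yields no contradiction directly; the contradiction instead comes from a careful case analysis or, more robustly, from consulting the list of maximal subgroups via \cite{wilsonsporadic,ATLAS} together with the GAP Character Table Library \cite{GAPchar} to verify that in every maximal subgroup $M$ of even order of a sporadic or Tits group, and every element $x \in M$ of order $\ge 3$, some involution of $M$ fails to commute with $x$ — equivalently, $C_M(x)$ does not contain every involution of $M$. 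If $M$ has a unique involution $z$ (so $z \in Z(M)$), then $x \in M = C_G(z)$ and the order of $x$ is $\ge 3$, so $xz$ or a suitable conjugate does the job: one takes $g \in G \setminus M$ and sets $t = z^g$, an involution with $C_G(t) = M^g \ne M$, so $x \notin C_G(t)$, i.e.\ $[x,t] \ne 1$; and $\langle x, t \rangle \le \langle M, M^g \rangle$ need not be proper, so instead one checks $\langle x, t\rangle \ne G$ directly from the fact that $x$ lies in the proper subgroup...

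The main obstacle, and the reason the final argument will lean on computation, is precisely the non-generating condition: an involution $t \notin M$ not commuting with $x$ is easy to find, but $\langle x, t\rangle$ could equal $G$. The safe path is therefore to insist $t \in M$ and show $C_M(x)$ omits some involution of $M$; the hard part is that this can fail for small "degenerate" maximal subgroups (those with cyclic or quaternion Sylow $2$), and those cases must be cleared separately — either by the conjugation trick with a case-check that $\langle x, z^g\rangle \ne G$ (using that $x$ has small order relative to $G$, or that $\langle x, z^g\rangle$ normalises something), or by direct GAP verification over the finitely many sporadic groods and their maximal subgroups. I would organise the write-up as: (1) reduce to $M$ with a unique involution $z \in Z(M)$; (2) in that case $C_G(z) = M$ and $x \in M$; (3) produce $t = z^g$ for suitable $g$, check $[x,t]\ne 1$; (4) verify $\langle x, t \rangle \ne G$, appealing to \cite{GAPchar} for the sporadic groups where $M$ has a unique involution.
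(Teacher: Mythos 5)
Your overall strategy coincides with the paper's: insist that the witnessing involution $t$ lie \emph{inside} a maximal subgroup $M$ containing $x$ (so that $\langle x,t\rangle\le M<G$ is automatic), reduce the problem to showing that $C_M(x)$ omits some involution of $M$, and verify this largely by computation over the maximal subgroups of the sporadic groups. The paper makes this effective via two checkable sufficient criteria --- $|T_M|>|C_M(x)|$, or $\langle T_M\rangle=M$ (e.g.\ $M$ simple), either of which forces an involution of $M$ outside $C_M(x)$ because $x\notin Z(M)$ by Proposition~\ref{prop:spormaxz} --- and then runs the GAP check over these criteria.

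The genuine gap is in your fallback for when the in-$M$ argument is not immediate. You reduce the residual difficulty to the case where $M$ has a unique involution $z\in Z(M)$ and propose $t=z^g$ with $g\notin M$; but, as you yourself note, $\langle x,z^g\rangle$ may then generate $G$, and your step (4) never supplies a reason why it does not (the Character Table Library cannot certify non-generation, and no structural argument is given). Moreover, the reduction itself is off target: the cases that survive the computational check are not characterised by $M$ having a unique central involution. They are six specific pairs $(G,|x|)$, and in five of them the resolution is much simpler than a conjugation trick --- there $|x|$ is odd and $C_G(x)=\langle x\rangle$, so $x$ centralises \emph{no} involution of $G$ and any involution of $M$ works; in the remaining case ($G=\mathrm{Co}_1$, $|x|=3$) one simply chooses a different maximal subgroup $K\cong (A_5\times\mathrm{J}_2)\sd 2$ containing $x$ with $\langle T_K\rangle=K$. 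So the missing ingredient is the observation that failure of your computational criterion pushes you towards elements with odd-order (indeed cyclic self-)centralisers, for which the conclusion is trivial, rather than towards quaternion/cyclic Sylow $2$-subgroups, for which your proposed escape route does not close.
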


\begin{proof}
It suffices to show that there exists a maximal subgroup of $G$ that contains both $x$ and an involution that does not centralise $x$. Let $M$ be a maximal subgroup of $G$ of even order, with $x \in M$. Since $|x| > 2$, Proposition~\ref{prop:spormaxz} shows that $x \notin Z(M)$. Let $T_M$ be the set of involutions of $M$. If $|T_M| > |C_M(x)|$, or if the normal subgroup $\langle T_M \rangle$ of $M$ is equal to $M$, then $x$ does not centralise every involution of $M$. Note that this second condition holds whenever $M$ is simple.

Using GAP, we see that if $x$ does not lie in any simple maximal subgroup, nor in any maximal subgroup $M$ with $|T_M| > |C_M(x)|$, then one of the following holds:
\begin{multicols}{2}
\begin{enumerate}[label={(\roman*)},font=\upshape]
\item $G = \mathrm{J}_1$ and $|x| \in \{7,19\}$;
\item $G = \mathrm{Ly}$ and $|x| \in \{37,67\}$;
\item $G = \mathrm{Co}_1$ and $|x| = 3$;
\item $G = \mathrm{J}_4$ and $|x| \in \{29,43\}$; or
\item $G = \mathrm{Fi}_{24}'$ and $|x| = 29$.
\end{enumerate}
\end{multicols}
Our computations here involve the fusion of $M$-conjugacy classes in $G$. As such, when $G = \mathbb{M}$, we consider only the set of maximal subgroups of $\mathbb{M}$ for which this fusion is stored in GAP (this is a subset of the set returned by the \texttt{NamesOfFusionSources} function). Similarly, when $G = \mathbb{B}$, our computations exclude the maximal subgroups of shape $(2^2 \times F_4(2)) \sd 2$, for which this fusion is not stored.

Now, in case (iii), $x$ is an element of the $G$-conjugacy class labelled $3\mathrm{A}$ in the \textsc{Atlas} \cite{ATLAS}, and lies in a maximal subgroup $K \cong (A_5 \times \mathrm{J}_2) \sd 2$, which clearly satisfies $\langle T_K \rangle = K$. In all other cases, $|x|$ is odd and $C_G(x) = \langle x \rangle$, hence $x$ does not commute with any involution of $M$.
\end{proof}

\begin{lem}
\label{lem:babyconj}
Suppose that $G = \mathbb{B}$, and let $R$ and $M$ be maximal subgroups of $G$ isomorphic to $\mathrm{Fi}_{23}$. Additionally, let $s \in R$ with $|s| = 23$. Then there exists $f \in R \cap M$ such that $d(s,f) \le 1$.
\end{lem}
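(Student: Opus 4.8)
The goal is to show that two copies $R$ and $M$ of $\mathrm{Fi}_{23}$ inside $\mathbb{B}$, together with an element $s \in R$ of order $23$, can be linked by $d(s,f) \le 1$ in $\nc(\mathbb{B})$ for some $f \in R \cap M$. The only way $d(s,f) \le 1$ can fail is if $s = f$ (distance $0$ requires nothing, so the real content is the adjacency), or if $s$ and $f$ either commute or generate $\mathbb{B}$. So the strategy is: first locate a suitable candidate $f \in R\cap M$; then check that $\langle s, f\rangle \ne \mathbb{B}$ (this will be immediate since $\langle s,f\rangle \le R$, a proper subgroup); and finally check $[s,f]\ne 1$, or else fall back to an element conjugate to $f$ that does not centralise $s$.

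**Key steps.** First, I would establish that $R \cap M$ is nontrivial and in fact large. In $\mathbb{B}$, the two classes of $\mathrm{Fi}_{23}$ subgroups (or the single class, depending on the \textsc{Atlas} data) are maximal of very large index, so a counting/character-theoretic argument shows $R \cap M \ne 1$; more usefully, one can arrange (by replacing $M$ with a suitable $\mathbb{B}$-conjugate, which is legitimate since the statement quantifies over all such $M$... but wait, it does not — $M$ is given) — so instead I would argue directly: since $\mathrm{Fi}_{23}$ has no subgroup of index less than some large bound, $|R \cap M|$ must be comparable to $|R|^2/|\mathbb{B}|$, which is still enormous, and in particular $R \cap M$ contains elements of order $23$. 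Second, using that $s \in R$ has order $23$ and $C_{\mathbb{B}}(s)$ is small (by the \textsc{Atlas}, the centraliser of a $23$-element in $\mathbb{B}$ has order $23$, so $C_{\mathbb{B}}(s) = \langle s \rangle$), I would pick $f \in R \cap M$ with $f \notin \langle s\rangle$: then automatically $[s,f] \ne 1$, since $C_{\mathbb{B}}(s) = \langle s \rangle$ contains no such $f$. Third, $\langle s, f \rangle \le R < \mathbb{B}$, so $s \not\sim$-fails the non-generating condition, i.e. $s \sim f$ in $\nc(\mathbb{B})$, giving $d(s,f) = 1 \le 1$. The one thing to rule out is $R \cap M \subseteq \langle s \rangle$, i.e.\ $R \cap M$ has order dividing $23$; but $|R\cap M| \ge |R|^2/|\mathbb{B}|$ is vastly larger than $23$, so this cannot happen.

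**Main obstacle.** The delicate point is pinning down $C_{\mathbb{B}}(s) = \langle s\rangle$ and the lower bound on $|R \cap M|$ precisely enough to guarantee $R \cap M \not\subseteq \langle s\rangle$. The centraliser claim is a direct \textsc{Atlas} lookup (the class $23\mathrm{A}/23\mathrm{B}$ in $\mathbb{B}$ has centraliser order $23$). The intersection bound follows from $|R \cap M| \ge |R|^2/|\mathbb{B}|$ together with $|\mathrm{Fi}_{23}| \approx 4 \times 10^{18}$ and $|\mathbb{B}| \approx 4 \times 10^{33}$, which gives $|R\cap M| \gtrsim 4\times 10^3 > 23$; one should state this cleanly, perhaps via a GAP computation with the Character Table Library as is done elsewhere in this subsection, or by invoking the known list of maximal subgroups of $\mathrm{Fi}_{23}$ to see directly that $R\cap M$ cannot be cyclic of order $23$. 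With these two facts in hand the argument is short:

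\begin{proof}
By the \textsc{Atlas} \cite{ATLAS}, an element of order $23$ in $\mathbb{B}$ has centraliser of order $23$, so $C_G(s) = \langle s \rangle$. Since $R$ and $M$ are maximal subgroups of $G$, we have $|R \cap M| \ge |R|^2/|G| > 23$, and in fact $R \cap M$ is not cyclic of order $23$ (this follows, e.g., from the list of subgroups of $\mathrm{Fi}_{23}$, or by a direct GAP \cite{GAP} computation with the Character Table Library \cite{GAPchar}). Hence there exists $f \in (R \cap M) \setminus \langle s \rangle$. As $C_G(s) = \langle s \rangle$, we have $[s,f] \ne 1$, and since $\langle s, f \rangle \le R < G$, we obtain $s \sim f$ in $\nc(G)$. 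Therefore $d(s,f) = 1$.
\end{proof}
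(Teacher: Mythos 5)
Your overall strategy is the same as the paper's: use $|R|\,|M|>|G|$ to get a nontrivial intersection, use the smallness of the centraliser of $s$ to force non-commuting, and note $\langle s,f\rangle\le R<G$ to rule out generation. However, your key cited fact is wrong: the centraliser in $\mathbb{B}$ of an element of order $23$ does \emph{not} have order $23$. As the paper itself records (in the proof of the next lemma), $C_{\mathbb{B}}(s)=Z(T)\times\langle s\rangle\cong C_2\times C_{23}$ has order $46$, where $T\cong \nonsplit{2^{1+22}}{\mathrm{Co}_2}$; so from $f\notin\langle s\rangle$ alone you cannot conclude $[s,f]\ne 1$, since $f$ could a priori be one of the $23$ elements of $C_{\mathbb{B}}(s)\setminus\langle s\rangle$. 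The gap is easily repaired: what you actually need is $C_R(s)=\langle s\rangle$, which is true because in $\mathrm{Fi}_{23}$ the centraliser of an order-$23$ element has order $23$; since your $f$ lies in $R$, the condition $f\in R\setminus\langle s\rangle$ then does give $[s,f]\ne1$. With that substitution your argument goes through (and your bound $|R\cap M|\ge|R|^2/|G|\approx 4\times 10^3>23$ correctly supplies an $f$ outside $\langle s\rangle$). For comparison, the paper avoids the counting refinement altogether by splitting into two cases: if $\langle s\rangle\le M$ it simply takes $f=s$ (distance $0$, which is why the lemma says $d(s,f)\le 1$ rather than $=1$), and otherwise $\langle s\rangle\cap M=1$ by primality of $23$, so \emph{any} nontrivial $f\in R\cap M$ automatically avoids $\langle s\rangle=C_R(s)$. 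That version needs only $R\cap M\ne 1$, not a numerical lower bound on $|R\cap M|$.
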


\begin{proof}
Let $S:=\langle s \rangle$. If $S \le M$, then we can set $f = s$. Otherwise, $S \cap M = 1$, and since $|R|\,|M| > |G|$, there exists $f \in (M \cap R) \setminus \{1\}$. As $f \notin S = C_R(s)$, we observe that $s \sim f$.
\end{proof}

\begin{lem}
\label{lem:babyneighb}
Suppose that $G = \mathbb{B}$, and let $s,y \in G \setminus \{1\}$, with $|s| = 23$. Then $d(s,y) \le 3$.
\end{lem}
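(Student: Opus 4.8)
The plan is to route a short path from $s$ to $y$ through suitable conjugates of the maximal subgroups $\mathrm{Fi}_{23}$ and $2^{1+22}{\cdot}\mathrm{Co}_2$ of $\mathbb{B}$ (each of which contains elements of order $23$), together with $47{:}23$, using Lemmas~\ref{lem:propernc} and~\ref{lem:babyconj} and elementary order estimates, with a short list of residual classes checked computationally.

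First I would collect structural facts. Since $23$ divides $|\mathrm{Fi}_{23}|$ and $|\mathrm{Co}_2|$, and all subgroups of $\mathbb{B}$ of order $23$ are conjugate (being Sylow), $s$ lies in a maximal subgroup $R\cong\mathrm{Fi}_{23}$ and in a maximal subgroup $K\cong 2^{1+22}{\cdot}\mathrm{Co}_2$, as well as in a maximal subgroup $N\cong 47{:}23$. One checks from the relevant character tables that $C_{\mathbb{B}}(s)$ has order $46$ and is contained in $K$. As $Z(R)=1$, Lemma~\ref{lem:propernc} gives $d(s,w)\le 2$ for all $w\in R\setminus\{1\}$, and similarly inside $K$ away from its centre of order $2$. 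Finally, $|\mathrm{Fi}_{23}|^{2}>|\mathbb{B}|$, so any two conjugates of $R$ intersect in a subgroup of order exceeding $2\,|C_{\mathbb{B}}(s)|$; it follows, as in the proof of Lemma~\ref{lem:babyconj}, that $d(s,s')\le 2$ for \emph{any} two elements $s,s'$ of $\mathbb{B}$ of order $23$.

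Now let $y\in\mathbb{B}\setminus\{1\}$. If $y$ lies in some conjugate $R'$ of $R$, then Lemma~\ref{lem:babyconj} gives $f\in(R\cap R')\setminus\{1\}$ with $d(s,f)\le 1$, and Lemma~\ref{lem:propernc} inside $R'$ gives $d(f,y)\le 2$, whence $d(s,y)\le 3$; the same works with $\mathrm{Fi}_{23}$ replaced by $2^{1+22}{\cdot}\mathrm{Co}_2$, using the order estimate in place of Lemma~\ref{lem:babyconj}. This disposes of every $y$ whose $\mathbb{B}$-class meets a conjugate of $\mathrm{Fi}_{23}$ or of $2^{1+22}{\cdot}\mathrm{Co}_2$ — in particular every $y$ of order $23$. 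If $|y|=47$, then $y$ lies in a conjugate of $47{:}23$, which also contains an element $s'$ of order $23$ with $s'\sim y$; combining with the previous paragraph, $d(s,y)\le 1+d(s,s')\le 3$. The remaining possibilities form a short list of $\mathbb{B}$-classes — those of order (such as $19$, $25$, $31$, $40$, $46$) not occurring in $\mathrm{Fi}_{23}$ or $\mathrm{Co}_2$, plus a few involution classes; for each such $y$ I would take an even-order maximal subgroup $M\ni y$, pick $w\in M$ with $[w,y]\ne 1$ lying in a conjugate of $\mathrm{Fi}_{23}$ or of $2^{1+22}{\cdot}\mathrm{Co}_2$, and conclude $w\sim y$ (as $\langle w,y\rangle\le M\ne\mathbb{B}$) and $d(s,w)\le 2$, so $d(s,y)\le 3$.

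I expect this last step to be the main obstacle. The connector $w$ adjacent to $y$ need not lie in a hub containing a conjugate of $s$, and if $w$ is forced to be an involution its centraliser may be large enough that the relevant intersection of conjugates of $\mathrm{Fi}_{23}$ (or of $2^{1+22}{\cdot}\mathrm{Co}_2$) consists entirely of elements commuting with $w$, breaking the distance-$2$ estimate for $d(s,w)$. To handle this I would choose $w$ of large order whenever possible, and, in the few tight cases, insert one further vertex and verify the needed incidence of maximal subgroups directly — from the \textsc{Atlas} and from GAP, using the Character Table Library, which records the class fusions of all maximal subgroups of $\mathbb{B}$ apart from those of shape $(2^2\times F_4(2))\sd 2$.
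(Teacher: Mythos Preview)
Your broad strategy---route paths from $s$ through the two large ``hubs'' $R\cong\mathrm{Fi}_{23}$ and $T\cong 2^{1+22}{\cdot}\mathrm{Co}_2$ containing it---is the same as the paper's, and your treatment of $y$ already lying in a conjugate of $R$ or $T$, and of $|y|=47$ via $d(s,s')\le 2$, is correct.

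The genuine gap is the residual step. For $y$ in none of these hubs you pick $w\in M$ with $w\sim y$ and $w$ in some conjugate $R'$ of $R$ (or of $T$), and then assert $d(s,w)\le 2$. But your earlier paragraph only established $d(s,w)\le 3$ for such $w$; the sharper bound would require an $f\in R\cap R'$ avoiding both $C_R(s)$ and $C_{R'}(w)$, and since $|R\cap R'|$ can be as small as $|R|^2/|\mathbb{B}|\approx 4025$ while $|C_{R'}(w)|$ may be enormous (e.g.\ if $w$ is a $2A$-involution of $\mathrm{Fi}_{23}$), there is no uniform argument. Your proposed fix of ``inserting one further vertex'' then only yields $d(s,y)\le 4$, which is not the lemma. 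The paper sidesteps this by reversing the roles of hub and target: rather than placing the connector in a conjugate of $R$ or $T$, it lets $L$ be \emph{any} maximal subgroup containing $y$ with either $Z(L)=1$, $L\not\cong R$ and $|L|\,|R|>|\mathbb{B}|$, or $Z(L)>1$ and $|L|\,|T|>|\mathbb{B}|$. The intersection $L\cap R$ (resp.\ $L\cap T$) is then nontrivial, and any nontrivial $b$ in it satisfies $s\sim b$ because $C_R(s)=\langle s\rangle$ has order $23$ (resp.\ $C_T(s)=Z(T)\times\langle s\rangle$ has order $46$), while $b\in L\setminus Z(L)$ gives $d(b,y)\le 2$. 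A GAP check shows such an $L$ exists whenever $|y|\notin\{25,47,55\}$, leaving only three residual orders rather than your longer list; $|y|=47$ is done as you do, and $|y|\in\{25,55\}$ via the specific subgroups $\mathrm{HN}{:}2$ and $5{:}4\times\mathrm{HS}{:}2$ together with a second application of Lemma~\ref{lem:babyconj}.
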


\begin{proof}
The element $s$ generates a Sylow $23$-subgroup $S$ of $G$ and lies in maximal subgroups $R \cong \mathrm{Fi}_{23}$ and $T \cong \nonsplit{2^{1+22}}{\mathrm{Co}_2}$. Each maximal subgroup of $G$ that contains $s$ and is not $G$-conjugate to $R$ or $T$ has shape $47 \sd 23$. Moreover, $C_R(s) = S$ and (using Proposition~\ref{prop:spormaxz} and the \textsc{Atlas}) $C_G(s) = C_T(s) = Z(T) \times S \cong C_2 \times S$. In addition, Proposition~\ref{prop:isolvertnc} shows that $y \in L \setminus Z(L)$ for some maximal subgroup $L$ of $G$. We will show that $d(s,y) \le 3$ whenever $L$ satisfies certain properties, and that we may often choose $L$ to satisfy those properties.

Suppose first that $Z(L) > 1$ and $|L|\,|T| > |G|$, so that $L \cap T \ne 1$. We may assume that $L \ne T$, else $d(s,y) \le 2$ by Lemma~\ref{lem:propernc}. If there exists a non-identity $u \in (L \cap T) \cap (Z(L) \cup Z(T))$, then $Z(L) \cup Z(T) \subseteq L \cap T$ (if $u \in Z(L)$, say, then $u \notin Z(G)$ implies that $Z(T) \le L \cap T$, and then $Z(T) \setminus Z(G) \ne \varnothing$ yields $Z(L) \le L \cap T$). Note also that $Z(L) \cup Z(T)$ is a proper subset of $\langle Z(L), Z(T) \rangle$, since $Z(L) \cap Z(T) = 1$. Thus whether or not such an element $u$ exists, $L \cap T$ contains an element $c \notin Z(L) \cup Z(T)$. Moreover, any such $c$ satisfies $d(c,y) \le 2$ by Lemma~\ref{lem:propernc}. In particular, if $S \le L \cap T$, then we can set $c = s$, and so $d(s,y) \le 2$. Assume therefore that $S \not\le L \cap T$. We claim that $s \sim c$. Indeed, if $s \nsim c$, then $c$ centralises $s$, and hence lies in $(Z(T) \times S) \setminus (Z(T) \cup S)$. Thus $c = z s^i$ for some $i \in \{1,2,\ldots,22\}$, where $z$ is the unique involution of $Z(T)$. This implies that $\langle c^2 \rangle = \langle s^{2i} \rangle = S$, and so $S \le L \cap T$, a contradiction. Therefore, $s \sim c$ and $d(s,y) \le 3$.

Next, suppose that $Z(L) = 1$, $|L|\,|R| > |G|$, and $L \not\cong R$. Then $L \cap R \ne 1$, while $S \cap L = 1$ (since no $L$ satisfying the given assumptions has order divisible by $|S|$). As $C_R(s) = S$, each $b \in L \cap R$ is adjacent to $s$ in $\nc(G)$. Moreover, $d(b,y) \le 2$ by Lemma~\ref{lem:propernc}, and so again $d(s,y) \le 3$.

Assume now that $|y| \notin \{25,47,55\}$. Using GAP, we see that there exists a maximal subgroup $K$ of $G$ containing $y$, such that either $Z(K) > 1$ and $|K|\,|T| > |G|$; or $Z(K) = 1$, $|K|\,|R| > |G|$, and $K \not\cong R$. Moreover, Proposition~\ref{prop:spormaxz} and Lemma~\ref{lem:simplenoncent}\ref{simplenoncent1} show that $K \setminus Z(K)$ contains a $G$-conjugate of $y$. Hence we can choose $L$ to be a $G$-conjugate of $K$, and $d(s,y) \le 3$ by the previous two paragraphs. To complete the proof, we will consider the remaining elements $y$.

\medskip

\noindent \textbf{Case (a)}: $|y| = 47$. Here, $C_G(y) = \langle y \rangle$, and each maximal subgroup of $G$ containing $y$ has shape $47 \sd 23$. As $S$ is a Sylow subgroup of $G$, it follows that $y \sim s'$ for some $s' \ne 1$ in a $G$-conjugate $S'$ of $S$. To complete the proof in this case, we will show that $d(s,s') \le 2$.

The element $s'$ lies in a $G$-conjugate $M$ of $R$. If $\langle s \rangle = \langle s' \rangle$, then it is clear that $d(s,s') = 2$. Otherwise, $[s,s'] \ne 1$, and so if $s$ or $s'$ lies in $R \cap M$, then $d(s,s') = 1$. Suppose finally that $s, s' \notin R \cap M$. We deduce from Lemma~\ref{lem:babyconj} that $d(s,f) = 1 = d(s',f')$ for some $f,f' \in R \cap M$. Hence there exists $t \in R \cap M$ centralising neither $s$ nor $s'$, and $(s,t,s')$ is a path in $\nc(G)$.

\medskip

\noindent \textbf{Case (b)}: $|y| \in \{25,55\}$. There is a unique $G$-conjugacy class of elements of order $|y|$, and so we may choose $L \cong \mathrm{HN} \sd 2$ if $|y| = 25$, or $L \cong (5 \sd 4) \times (\mathrm{HS} \sd 2)$ if $|y| = 55$. In either case, no involution of $G$ has a centraliser of order $|L|$, and so Proposition~\ref{prop:spormaxz} yields $Z(L) = 1$.

First suppose that there exists $g \in (L \cap R) \setminus \{1\}$. Then $g \notin {S \cup Z(L) \cup Z(R)}$, as $|S|$ does not divide $|L|$ and $Z(L) = Z(R) = 1$. Hence Lemma~\ref{lem:propernc} yields $d(s,g), d(g,y) \le 2$. Additionally, $C_G(y) = \langle y \rangle$, and so $C_G(y) \cap C_G(s) = 1$. Thus either $s \sim g$ or $d(g,y) \le 1$, and $d(s,y) \le 3$.

Suppose finally that $L \cap R = 1$. Each of $G$ and $R$ has a unique conjugacy class of elements of order $35$, and $L$ also contains elements of order $35$. Hence there exists a $G$-conjugate $M$ of $R$ such that $L \cap M$ contains an element $m$ of order $35$, and $m \sim y$ since $C_G(y) = \langle y \rangle$. Additionally, Lemma~\ref{lem:babyconj} shows that $d(s,f) \le 1$ for some $f \in R \cap M$. Note that $f \in M \setminus \langle m \rangle = M \setminus C_M(m)$, since $m \in L$ and $L \cap R = 1$. Thus $f \sim m$ and $d(s,y) \le 3$.
\end{proof}

Recall that the intersection graph $\Delta_G$ of $G$ has vertices the proper nontrivial subgroups of $G$, and its edges are the pairs of subgroups that intersect nontrivially. The result \cite[Theorem 1.1]{intgraph} on the diameter of $\Delta_G$ (for all non-abelian finite simple groups $G$) does not take into account the new information about $\mathbb{M}$ mentioned in Remark~\ref{rem:monsteroddmax}. We now correct that theorem in the case $G = \mathbb{M}$; this will be useful in the proofs of Theorems~\ref{thm:spordiamnc} and \ref{thm:nongensimple}.

\begin{prop}
\label{prop:intmonst}
Suppose that $G = \mathbb{M}$. Then the intersection graph $\Delta_G$ of $G$ is connected with diameter $5$ or $6$.
\end{prop}

\begin{proof}
Let $M_1$ and $M_2$ be maximal subgroups of $G$, with $|M_1|$ odd. By elementary arguments from the proof of \cite[Theorem 1.1]{intgraph}, it suffices to bound $d(S_1,S_2)$, where $S_1$ and $S_2$ are proper nontrivial subgroups of $M_1$ and $M_2$, respectively, such that $S_1$ lies in no maximal subgroup of even order. By Remark~\ref{rem:monsteroddmax}, $M_1$ has shape $59 \sd 29$. Let $S$ and $H$ be subgroups of $M_1$ of order $59$ and $29$, respectively, so that $H$ is a Sylow subgroup of $G$.

We observe from \cite{ATLAS} that $|N_G(H)| = 2436$. Arguing almost exactly as in the $\mathbb{B}$ case of the proof of \cite[Theorem 1.1]{intgraph}, with the aid of computations involving the GAP Character Table Library, we determine that $M_1$ is the unique maximal subgroup of $G$ containing $S$; that the maximal subgroups of $G$ containing $H$ are precisely $84$ conjugates of $M_1$, three conjugates of $\mathrm{PGL}(2,29)$, and one conjugate of $3.\mathrm{Fi}_{24}$; and that $d(S,S^g) \ge 5$ for some $g \in G$. In particular, we may assume that $S_1 = S$.

Now, let $R$ be a maximal subgroup of even order containing $H$. If $S_2$ is not conjugate to $S$, then $M_2$ can be chosen to have even order. Observe that the subgroup $D$ generated by an involution of $R$ and an involution of $M_2$ is dihedral. Hence $S_1 \sim M_1 \sim R \sim D \sim M_2 \sim S_2$, and we obtain $d(S_1,S_2) \le 5$. If instead $S_2 = S^x$ for some $x \in G$, then we similarly deduce that $S_1 \sim M_1 \sim R \sim \hat D \sim R^x \sim M_1^x \sim S_2$ for some dihedral group $\hat D$, and thus $d(S_1,S_2) \le 6$. Therefore, $\diam(\Delta_G) \in \{5,6\}$.
\end{proof}

\begin{rem}
\label{rem:intmax6}
Let $G$, $S$ and $M_1 \cong 59 \sd 29$ be as in the proof of Proposition~\ref{prop:intmonst}, and let $F$ be a maximal subgroup of $G$ of shape $3.\mathrm{Fi}_{24}$. It follows from the proof of Proposition~\ref{prop:intmonst} that $\diam(\Delta_G) = 6$ if and only if there exists $x \in G$ such that $d(S,S^x) = 6$. Since $M_1$ is the unique neighbour of $S_1$ in $\Delta_G$, this occurs if and only if $d(M_1,M_1^x) = 4$.

One possible obstruction to this occurring is the existence of elements $w, y \in G$ such that $M_1 \sim F^w \sim F^y \sim M_1^x$. Since $H$ lies in a unique conjugate of $F$, and $M_1$ contains $59$ conjugates of $H$, exactly $59$ conjugates of $F$ are adjacent to $M_1$ in $\Delta_G$. An elementary counting argument now shows that exactly $\alpha:=59|F|/|M_1| \approx 2.6 \times 10^{23}$ conjugates of $M_1$ are adjacent to $F$. Additionally, by the proof of \cite[Theorem 1.3]{liebeckshalev}, the number $\beta$ of conjugates of $F$ adjacent to $F$ is at most $\gamma:=|G:F|\sum_{r \in R}\frac{|r^G \cap F|^2}{|r^G|}$, where $R$ is a set of representatives for the $G$-conjugacy classes of elements of prime order. We calculate using the GAP Character Table Library that $\gamma \approx 4.2 \times 10^{28}$. We therefore deduce that there are at most $59\alpha\gamma \approx 6.5 \times 10^{53}$ conjugates $M_1^x$ of $M_1$ such that $M_1 \sim F^w \sim F^y \sim M_1^x$ for some $w, y \in G$. This is unfortunately not a useful upper bound, as it is larger than the number $|G:M_1| \approx 4.7 \times 10^{50}$ of conjugates of $M_1$.

Determining a better estimate for $\beta$ is difficult due to the large index of $F$ in $G$. However, if a sufficiently small upper bound for $\beta$ can be obtained, then the above argument (along with similar arguments for the other types of path between $M_1$ and $M_1^x$ of length at most three) would yield the existence of an element $x$ such that $d(M_1,M_1^x) = 4$, so that $\diam(\Delta_G) = 6$. Otherwise, deducing the exact value of $\diam(\Delta_G)$ would require more detailed information about the neighbourhoods of conjugates of $M_1$, $F$ and the maximal subgroup $\mathrm{PGL}(2,29)$ of $G$. The large order of $G$ is again a significant obstacle to obtaining this information.
\end{rem}

\begin{proof}[Proof of Theorem~\ref{thm:spordiamnc}]
For each $G \in \{\mathrm{M}_{11},\mathrm{M}_{12},\mathrm{M}_{22},\mathrm{M}_{23},\mathrm{J}_1,\mathrm{J}_2\}$, we obtain $\diam(\nc(G))$ using Magma. In all other cases, for $i \in \{1,2\}$, let $x_i$ be an element of $G \setminus \{1\}$ that lies in a maximal subgroup of even order, say $L_i$. By Proposition~\ref{prop:sporinvol}, $d(x_i,a_i) \le 1$ for some involution $a_i$ of $G$, and so (using Proposition~\ref{prop:isolvertnc} if $|x_i| = 2$) we may choose $L_i$ so that $x_i \notin Z(L_i)$. Lemma~\ref{lem:ncmaxsimple} now gives $d(x_1,x_2) \le 4$. Hence $\diam(\nc(G)) \le 4$ if each maximal subgroup of $G$ has even order.

It remains to consider the case where $G$ has a maximal subgroup $K$ of odd order. By Theorem~\ref{thm:simpleoddmax} and Remark~\ref{rem:monsteroddmax}, $G \in \{\mathrm{Th},\mathbb{B},\mathbb{M}\}$. Let $g,h \in G \setminus \{1\}$ such that $h$ lies in no maximal subgroup of even order.

First assume that $G = \mathrm{Th}$. Then $K \cong 31 \sd 15$. Let $m \in K$. If $|m| = 31$, then $m$ lies in a maximal subgroup of $G$ of shape $\nonsplit{2^5}{\mathrm{PSL}(5,2)}$, and otherwise $|C_G(m)|$ is even. Thus each element of $G$ lies in a maximal subgroup of even order, and so the element $h$ does not exist. Hence the first paragraph of the proof yields $\diam(\nc(G)) \le 4$.

Suppose next that $G = \mathbb{B}$. In this case, $K \cong 47 \sd 23$, $|h| = 47$, and $C_G(h) = \langle h \rangle$. Thus $h \sim s$ for each $s \in K$ of order $23$. As $d(s,g) \le 3$ by Lemma~\ref{lem:babyneighb}, we conclude that $d(h,g) \le 4$. It follows that $\diam(\nc(G)) \le 4$. In addition, the intersection graph of $G$ has diameter $5$ \cite[Theorem 1.1]{intgraph}, and so Lemma~\ref{lem:intncgraphs} yields $\diam(\nc(G)) \ge 4$. Therefore, $\diam(\nc(G)) = 4$.

Finally, assume that $G = \mathbb{M}$, so that $K \cong 59 \sd 29$, $|h| = 59$, and $C_G(h) = \langle h \rangle$. Thus $h \sim s$ for each $s \in K$ of order $29$. Let $R$ be a maximal subgroup of $G$ of even order containing $s$, and note that $\langle s \rangle$ is a Sylow subgroup of $G$. If $g$ is not conjugate to $h$, then, as in the first paragraph of the proof, $d(g,a) \le 1$ for some involution $a$ of $G$, and there exists a maximal subgroup $L$ of $G$ such that $g,a \in L \setminus Z(L)$. By Lemma~\ref{lem:simplenoncent}\ref{simplenoncent3} and the fact that two involutions generate a dihedral group, $R$ contains an involution $b$ such that $a \sim b$. Moreover, $|C_G(s)| = 87$, which implies that $s \sim b$. Hence $h \sim s \sim b \sim a \sim g$, and so $d(h,g) \le 4$. If instead $g = h^j$ for some $j \in G$, then we similarly deduce the existence of involutions $a'$ and $b'$ such that $h \sim s \sim b' \sim a' \sim s^j \sim g$, and so $d(h,g) \le 5$. Therefore, $\diam(\nc(G)) \le 5$. As the intersection graph of $G$ has diameter at least $5$ by Proposition~\ref{prop:intmonst}, Lemma~\ref{lem:intncgraphs} implies that $\diam(\nc(G)) \in \{4,5\}$.
\end{proof}

Determining the exact diameter of $\nc(\mathbb{M})$ (and similarly for the non-generating graph of $\mathbb{M}$ in Theorem~\ref{thm:nongensimple}) is difficult for reasons similar to those discussed in Remark~\ref{rem:intmax6}.

\subsection{Exceptional groups of Lie type}
\label{subsec:exceptionalnc}

We now prove Theorem~\ref{thm:ncsimple} in the case where $G$ is an exceptional group of Lie type. In view of Proposition~\ref{prop:ncsimpleeven} and Theorem~\ref{thm:simpleoddmax}, it remains to prove the following. Here, $q$ denotes a prime power, and we do not include ${}^2G_2(3)' \cong \mathrm{PSL}(2,8)$; for that group, see Theorem~\ref{thm:linearnc}.

\begin{thm}
\label{thm:ncexcep}
Let $G$ be a finite simple group. If $G \cong \mathrm{Sz}(q)$, or if $q$ is odd and $G \in \{G_2(q), {}^2G_2(q), {}^3\!D_4(q), F_4(q), E_8(q)\}$, then $\nc(G)$ is connected with diameter at most $4$.
\end{thm}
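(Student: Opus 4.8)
The plan is to reduce the theorem to Lemma~\ref{lem:ncmaxsimple} by showing that, for each group $G$ in the list and each vertex $x$ of $\nc(G)$, there is a maximal subgroup $L$ of even order with $x \in L \setminus Z(L)$ such that, moreover, $L$ contains an involution $a$ with $d(x,a) \le 1$. Then Lemma~\ref{lem:ncmaxsimple} gives $d(x,y) \le 4$ for all vertices $x, y$, which is exactly the claim (note that connectedness is automatic once every pair has finite distance). Since all these groups have only even-order maximal subgroups (by Theorem~\ref{thm:simpleoddmax}, none of $\mathrm{Sz}(q)$, $G_2(q)$, ${}^2G_2(q)$, ${}^3\!D_4(q)$, $F_4(q)$, $E_8(q)$ appears in the list of groups with an odd-order maximal subgroup), Proposition~\ref{prop:isolvertnc} already guarantees that every vertex lies in some maximal subgroup of even order; the work is to upgrade this to ``lies with an adjacent involution''.

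The core step is therefore an analogue of Proposition~\ref{prop:sporinvol} for these exceptional families: I would show that every $x \in G$ of order at least $3$ is adjacent in $\nc(G)$ to some involution. As in the sporadic case, it suffices to find a maximal subgroup $M$ containing $x$ in which $x$ fails to centralise some involution; since $\mathrm{Sz}(q)$ and the odd-characteristic exceptional groups have, by the centraliser structure and the known maximal subgroup lists (Suzuki's original work for $\mathrm{Sz}(q)$; Kleidman, Liebeck--Seitz, Craven et al.\ for the exceptional groups), the property that $C_G(x)$ is small relative to the involution-content of a suitable maximal overgroup, this should go through. The one genuinely delicate point is elements $x$ of large prime order whose centraliser is cyclic and whose only maximal overgroups have odd order divisible by $|x|$ (the analogues of the exceptional cases (i)--(vi) in the proof of Proposition~\ref{prop:sporinvol}, e.g.\ elements of order dividing $q^2+q+1$, $q^4-q^2+1$, etc., lying only in normalisers of cyclic tori). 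For each such $x$ I would need to exhibit one further maximal subgroup of even order containing $x$ — typically a subfield subgroup, a parabolic, or $C_G(\text{involution})$ — in which $x$ is non-central and does not centralise all involutions; this is a finite (if tedious) check over the short list of ``large torus'' maximal subgroups for each family, using that the relevant cyclotomic divisors force $x$ into a unique conjugacy class of tori.

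The remaining ingredient is to rule out the possibility that every such even-order overgroup $L$ of $x$ has $x \in Z(L)$: for $x$ of order $\ge 3$ this is immediate once $x$ is shown to be non-central in the chosen $M$ (as in Proposition~\ref{prop:sporinvol}), and for involutions $x$ one invokes Lemma~\ref{lem:simplenoncent}\ref{simplenoncent1} exactly as in the proof of Theorem~\ref{thm:spordiamnc} to replace $x$ by a conjugate lying outside $Z(L)$ — here one just needs $Z(L) \le Z(M)$-type control, which follows because in these groups centralisers of involutions are the relevant maximal subgroups and have centre of order at most $2$ (a fact I would state for each family from the centraliser-of-involution data, paralleling Proposition~\ref{prop:spormaxz}). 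Assembling these, for every vertex $x$ we obtain a maximal subgroup $L$ of even order with $x \in L \setminus Z(L)$ and an involution $a \in L$ with $d(x,a) \le 1$; Lemma~\ref{lem:ncmaxsimple} then yields $\diam(\nc(G)) \le 4$. I expect the main obstacle to be precisely the large-prime-order torus elements in $E_8(q)$ and $F_4(q)$, where the list of maximal subgroups containing a given such element is short but the verification that one of them has even order and provides a non-commuting involution requires care with the torus structure.
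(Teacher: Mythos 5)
Your high-level reduction is the same as the paper's: by Theorem~\ref{thm:simpleoddmax} every maximal subgroup of these groups has even order, so by Proposition~\ref{prop:isolvertnc} and Lemma~\ref{lem:ncmaxsimple} it suffices to show that every non-involution of $G$ is adjacent in $\nc(G)$ to an involution. (Note that for involutions $x$ one can simply take $a=x$ in Lemma~\ref{lem:ncmaxsimple}, so no conjugation argument or analogue of Proposition~\ref{prop:spormaxz} is needed there.) The gap is in how you produce the adjacent involution. You propose to transplant the mechanism of Proposition~\ref{prop:sporinvol}: find a maximal overgroup $M$ of $x$ with $|T_M|>|C_M(x)|$ or with $\langle T_M\rangle=M$, and handle the leftover ``torus element'' cases by inspecting maximal subgroup lists. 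This does not go through as stated. First, the comparison $|T_M|>|C_M(x)|$ is a finite computation in the sporadic case but cannot be carried out uniformly across the infinite families $G_2(q),\dots,E_8(q)$; you would need genuine structural estimates, and you give none. Second, the claim that ``$C_G(x)$ is small relative to the involution-content of a suitable maximal overgroup'' is false for the hardest elements: the problem is not only large-order torus elements (for those, every maximal overgroup is a torus normaliser whose involutions invert the torus, so they are actually easy) but elements with \emph{large} centralisers, e.g.\ elements of order $3$ or elements lying in $C_G(a)$ for an involution $a$, exactly as in case (iii) of Proposition~\ref{prop:sporinvol}. Your sketch identifies the wrong bottleneck and offers no argument for the real one.

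The paper's proof closes precisely this gap with Lemma~\ref{lem:excepinvolodd}: for each involution $a$ in these groups, the centraliser $Y=C_G(a)$ satisfies $C_Y(Q_Y)=Z(Y)=\langle a\rangle$, where $Q_Y$ is generated by the involutions of $Y$. Then for any non-involution $x$ and any involution $a$ in a maximal overgroup of $x$, either $x\sim a$ already, or $x\in Y=C_G(a)$, and since $x\notin\langle a\rangle$ there is an involution $a'\in Y$ with $[x,a']\ne 1$ and $\langle x,a'\rangle\le Y<G$. Proving Lemma~\ref{lem:excepinvolodd} requires the explicit structure of involution centralisers (central products $\mathrm{SL}(2,q)\circ H$, $\mathrm{Spin}$ groups, etc., via Lemma~\ref{lem:quasicirc} and Proposition~\ref{prop:quasiinvol}), and this is exactly why the theorem is restricted to this particular list of exceptional groups --- a point your proposal does not engage with. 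Separately, $\mathrm{Sz}(q)$ has no involution centraliser of the above form and needs its own case analysis over Suzuki's four classes of maximal subgroups (subfield subgroups, $S\rtimes C_{q-1}$, $D_{2(q-1)}$, and the Frobenius groups $C_{q\pm\sqrt{2q}+1}\rtimes C_4$), including the observation that order-$4$ elements of the Sylow $2$-subgroup also lie in a maximal subgroup of the last type; your proposal defers all of this to ``the known maximal subgroup lists'' without supplying the arguments.
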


Given a group $X$, let $Q_X$ be the (characteristic) subgroup generated by the involutions of $X$.

\begin{lem}
\label{lem:quasicirc}
Let $S:=\mathrm{SL}(2,q)$, and let $H$ be a quasisimple group containing a subgroup $K$, such that $K \cong S$ and $Z(H) = Z(K)$. Additionally, let $R$ be the central product $S \circ H$, and let $\pi$ be the natural epimorphism $S \times H \to R$. If $Q_R \ne R$, then $q = 3$, $Q_R$ is a maximal subgroup of $R$ that contains $(H)\pi$, and $C_R(Q_R) = Z(R)$.
\end{lem}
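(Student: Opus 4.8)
The key structural input is that $S = \mathrm{SL}(2,q)$ is perfect precisely when $q \ge 4$ (or $q = 3$ with care about the quotient), and more to the point, that $Q_S = S$ whenever $q \ne 3$: for $q$ even, $S$ is already generated by transvections (involutions), and for $q \ge 5$ odd, $S$ is quasisimple with $Q_S$ a nontrivial normal subgroup, forcing $Q_S = S$ since $S/Z(S)$ is simple non-abelian and contains involutions. The exception is $q = 3$, where $S = \mathrm{SL}(2,3) \cong Q_8 \rtimes C_3$ has a unique involution (the central one), so $Q_S = Z(S)$. The plan is therefore to split immediately on whether $q = 3$: if $q \ne 3$, I would show $Q_R = R$, contradicting the hypothesis; if $q = 3$, I would identify $Q_R$ explicitly.

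First, suppose $q \ne 3$. Then $K \cong S = \mathrm{SL}(2,q)$ satisfies $Q_K = K$, so $(K)\pi \le Q_R$. Also, since $H$ is quasisimple, $H = Q_H$ (same argument: $Q_H \trianglelefteq H$ is nontrivial as $H/Z(H)$ is simple of even order, being non-abelian simple and hence containing involutions by Feit--Thompson, and $Q_H \ne Z(H)$ since a non-abelian simple group is generated by its involutions), so $(H)\pi \le Q_R$. But $R = (S \times H)\pi = (K)\pi \cdot (H)\pi$ — wait, that needs $S = K$ inside $R$, which holds since $\pi$ restricted to $S \times 1$ is injective. Hence $R = Q_R$, a contradiction. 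So the hypothesis $Q_R \ne R$ forces $q = 3$.

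Now assume $q = 3$, so $S = \mathrm{SL}(2,3)$ and $Q_S = Z(S) = Z(R)$ (the central involution). I would set $N := (H)\pi \cdot Z(R)$. Since $H = Q_H$, every involution of $R$ lying in $(H)\pi$ is a product of involutions, but more carefully: the involutions of $R$ are exactly the images under $\pi$ of elements $(a,h) \in S \times H$ with $(a,h)^2 \in \ker\pi$; since $S$ has a unique involution $z_S$ (central) and $H$'s involutions are non-central, one checks that every involution of $R$ lies in $(H)\pi \cdot \langle z_S \rangle = N$. Therefore $Q_R \le N$. Conversely, $(H)\pi = (Q_H)\pi \le Q_R$ and I must produce the central involution of $S$ inside $Q_R$: since $H$ is quasisimple containing $K \cong S = \mathrm{SL}(2,3)$ with $Z(H) = Z(K)$, the central involution $z$ of $H$ equals the central involution of $K$; under $\pi$, the diagonal identification means $z_S$ and $z$ map to the same element $z_R \in Z(R)$, which lies in $(H)\pi \le Q_R$. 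Hence $Z(R) \le Q_R$ and so $Q_R = N = (H)\pi \cdot Z(R)$.

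It remains to check that $N$ is maximal in $R$ with $C_R(N) = Z(R)$. For maximality: $R/N \cong S/Q_S = \mathrm{SL}(2,3)/Z \cong C_3$, which is simple, so $N$ is maximal. For the centraliser: $C_R(N) \ge Z(R)$ trivially; conversely, since $(H)\pi \le N$, any element of $C_R(N)$ centralises $(H)\pi$, and as $H$ is quasisimple with $Z(H) = Z(K) = Z(S)$ (so in particular $Z(H)$ does not contain a complement to the $S$-part), the centraliser of $(H)\pi$ in $R$ is $C_S(\text{diagonal}) \circ Z(H) = Z(S) \circ Z(H) = Z(R)$; intersecting with $C_R(Z(R)) = R$ changes nothing, giving $C_R(N) = Z(R)$. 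The main obstacle I anticipate is the bookkeeping around $\pi$ and the central product: one must be careful that $Z(H) = Z(K)$ forces the two copies of the central involution to be identified by $\pi$, so that $Z(R)$ is a single $C_2$ rather than $C_2 \times C_2$, and that the involutions of $R$ really are confined to the coset structure claimed — this is where the $q = 3$ special behaviour of $\mathrm{SL}(2,3)$ (unique, central involution) does all the work.
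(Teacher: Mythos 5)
Your proposal breaks down at its very first structural claim. For odd $q\ge 5$ it is \emph{not} true that $Q_S=S$ for $S=\mathrm{SL}(2,q)$: the Sylow $2$-subgroups of $\mathrm{SL}(2,q)$ with $q$ odd are generalised quaternion, so $S$ has a \emph{unique} involution, namely $-I_2$, and hence $Q_S=Z(S)$ for every odd $q$, not just $q=3$. Your fallback argument (``$Q_S$ is a nontrivial normal subgroup of a quasisimple group, hence equals $S$'') fails because a quasisimple group's proper normal subgroups are exactly its central subgroups, and $Q_S=Z(S)$ is one of them. The same error infects your claim that $Q_H=H$ for every quasisimple $H$: involutions of $H/Z(H)$ need not lift to involutions of $H$, and indeed $\mathrm{SL}(2,9)$ is a quasisimple group with a unique (central) involution --- this is precisely the example the paper gives, immediately after the lemma, of a case where $Q_R\ne R$. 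The idea you are missing, and which the paper's proof is built on, is that the central product has \emph{diagonal} involutions: if $A\in S$ and $B\in K$ both have order $4$ with $A^2=B^2=-I_2$, then $(A,B)^2\in\ker\pi$, so $(A,B)\pi$ is an involution of $S\circ K$ lying in neither factor. The paper shows these generate all of $J:=S\circ K$ when $q\ne 3$, and then uses quasisimplicity of $H$ to push from $J$ up to $R$.

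The same oversight invalidates your $q=3$ analysis. You assert that every involution of $R$ lies in $(H)\pi\cdot\langle z_S\rangle=(H)\pi$, but the diagonal involutions $(A,B)\pi$ with $A$ of order $4$ do not lie in $(H)\pi$ (that would force $A\in Z(S)$). In fact $Q_R$ contains the image of $Q_8\times Q_8$ (the normal Sylow $2$-subgroups of $S$ and $K$), so $Q_R$ properly contains $(H)\pi$ and, when proper, has index $3$ in $R$ because the image of $Q_8$ has index $3$ in $\tilde S\cong\mathrm{SL}(2,3)$. Your identification $Q_R=(H)\pi Z(R)$ gives a subgroup of index $12$ that is not even maximal, and the computation ``$R/N\cong\mathrm{SL}(2,3)/Z\cong C_3$'' is also wrong as stated, since $\mathrm{SL}(2,3)/Z(\mathrm{SL}(2,3))\cong A_4$. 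Both halves of the argument therefore need to be rebuilt around the diagonal involutions of the central product.
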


\begin{proof}
We observe that $\tilde S := (S)\pi$ and $\tilde K:=(K)\pi$ are normal subgroups of $J:=S \circ K = {(S \times K)\pi}$, and $\tilde S$ and $\tilde H:=(H)\pi$ are normal subgroups of $R$, with $\tilde S \cong S$, $\tilde K \cong K$ and $\tilde H \cong H$. Additionally, $[\tilde H, \tilde S] = ([H,S])\pi = 1$ and $\tilde H \tilde S = R$. Furthermore, the centres of $R$, $J$, $\tilde H$, $\tilde S$ and $\tilde K$ all coincide, and this common centre $\mathcal{Z}$ of order $(2,q-1)$ is equal to $\tilde H \cap \tilde S$. We divide the rest of the proof into two cases.

\medskip

\noindent \textbf{Case (a)}: $q \ne 3$. We first show that $Q_J = J$. This is clear if $q = 2$, so assume otherwise. Considering $K$ as a copy of $\mathrm{SL}(2,q)$, let $L:=\{(A,B) \in S \times K \mid A^2 = B^2 = -I_2\}$, where $I_2$ is the $2 \times 2$ identity matrix. The choices for $A$ and $B$ include the matrix $\begin{pmatrix}
0&a\\
-a^{-1}&0\\
\end{pmatrix}$ for each $a \in \mathbb{F}_q^\times$. As $S$ is quasisimple, and a quasisimple group's proper normal subgroups are precisely its central subgroups, we see that $\langle L \rangle^{S \times K}$ projects onto each of $S$ and $K$, and that the kernel of each projection map is isomorphic to $S \cong K$. Thus $\langle L \rangle^{S \times K} = S \times K$. Since the image under $\pi$ of each element of $L$ is an involution, it follows that $Q_J = J$.

Now, since $\tilde H$ is quasisimple and $\mathcal{Z} < \tilde K \le \tilde H$, no proper normal subgroup of $\tilde H$ contains $\tilde K$. Hence no proper normal subgroup of $R$ contains $J$. As $J = Q_J \le Q_R \trianglelefteq R$, we obtain $Q_R = R$.

\medskip

\noindent \textbf{Case (b)}: $q = 3$. Here, $Q_J$ is the image under $\pi$ of the direct product of the normal Sylow $2$-subgroups of $S$ and $K$, which are isomorphic to the quaternion group $Q_8$. Thus $\tilde S \cap Q_J$ is maximal in $\tilde S$, and $\tilde K \cap Q_J > \mathcal{Z}$. As the normal subgroup $\tilde H \cap Q_R$ of $\tilde H$ contains $\tilde K \cap Q_J$, it follows that $\tilde H \le Q_R$. Since $R = \tilde H \tilde S$, we deduce that $Q_R = \tilde H \tilde S \cap Q_R = \tilde H(\tilde S \cap Q_R)$.

If $\tilde S \cap Q_R = \tilde S$, then $Q_R = R$. We will therefore assume that $\tilde S \cap Q_R < \tilde S$. Then $\tilde S \cap Q_R$ is the maximal subgroup $\tilde S \cap Q_J$ of $\tilde S$, and $Q_R = \tilde H(\tilde S \cap Q_J)$, which is maximal in $\tilde H \tilde S = R$. Since $[\tilde H, \tilde S] = 1$, we obtain $\mathcal{Z} \le C_R(Q_R) = C_{\tilde H \tilde S}(\tilde H(\tilde S \cap Q_J)) \le Z(\tilde H)C_{\tilde S}(\tilde S \cap Q_J).$ The centraliser $C_{\tilde S}(\tilde S \cap Q_J)$ is equal to $\mathcal{Z} = Z(\tilde H) = Z(R)$, and hence so is $C_R(Q_R)$.
\end{proof}

For an example where $Q_R \ne R$, take $H$ to be the quasisimple group $\mathrm{SL}(2,9)$.

\begin{prop}
\label{prop:quasiinvol}
Let $H$ be a finite quasisimple group. If $|Z(H)| = 2$, then assume that $H/Z(H)$ is not isomorphic to $A_7$, or to $\mathrm{PSL}(2,q)$ for any odd $q$. Then $Q_H = H$.
\end{prop}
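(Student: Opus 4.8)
The plan is to exploit the normality of $Q_H$ in $H$ together with the perfectness of $H$, reducing the statement to the claim that $H$ possesses a non-central involution, and then to invoke the classification of quasisimple groups in which no such involution exists.

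First I would observe that, since the set of involutions of $H$ is closed under conjugation, $Q_H$ is a normal subgroup of $H$. As $H$ is quasisimple, $\overline{H} := H/Z(H)$ is simple, so the image $Q_H Z(H)/Z(H)$ of $Q_H$ in $\overline{H}$ is either trivial or equal to $\overline{H}$. In the latter case $Q_H Z(H) = H$, so $H/Q_H \cong Z(H)/(Z(H) \cap Q_H)$ is abelian; since $H = H'$, this forces $H/Q_H = 1$, that is, $Q_H = H$. Hence it remains only to exclude the possibility that $Q_H \le Z(H)$ --- equivalently, that every involution of $H$ is central.

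Suppose, for contradiction, that every involution of $H$ lies in $Z(H)$. Because $\overline{H}$ is non-abelian simple, $|\overline{H}|$, and therefore $|H|$, is even, so $H$ has an involution; by hypothesis it lies in $Z(H)$, so $|Z(H)|$ is even. In particular $Z(H) \neq 1$, which disposes of the case where $H$ is itself simple (there $Q_H$ would be a nontrivial normal subgroup of $H$, hence all of $H$). Now let $P$ be a Sylow $2$-subgroup of $H$; since every involution of $P$ is a central involution of $H$, we have $Q_P \le Z(H) \cap P$, which (as $Z(H)$ is normal) is the Sylow $2$-subgroup of $Z(H)$. In the case $|Z(H)| = 2$ --- the only case in which the proposition imposes extra hypotheses --- this says that $P$ has a unique involution, so $P$ is cyclic or generalized quaternion. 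If $P$ were cyclic, then $\overline{H}$ would have cyclic Sylow $2$-subgroups and hence a subgroup of index $2$ by a standard transfer argument, contradicting its simplicity; so $P$ is generalized quaternion. By the classification of quasisimple groups with generalized quaternion Sylow $2$-subgroups --- a consequence of the Brauer--Suzuki and Gorenstein--Walter theorems --- we obtain $H \cong \mathrm{SL}(2,q)$ for some odd $q > 3$, or $H \cong 2 \cdot A_7$, with $\overline{H} \cong \mathrm{PSL}(2,q)$ or $\overline{H} \cong A_7$ respectively. Both of these are excluded by hypothesis, the desired contradiction.

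The step I expect to be the main obstacle is this last one: identifying the quasisimple groups all of whose involutions are central rests on the (pre-classification but deep) theory of finite groups with quaternion or dihedral Sylow $2$-subgroups, and one must also consult the Schur multipliers of $\mathrm{PSL}(2,q)$ (for $q$ odd) and of $A_7$ to confirm that the relevant covering groups have centre of order exactly $2$. When $|Z(H)|$ is even but larger than $2$ the Sylow $2$-subgroup of $Z(H)$ need not be cyclic, and I would treat this by the same strategy, now additionally invoking Walter's classification of the finite simple groups with abelian Sylow $2$-subgroups and inspecting the short explicit list of finite simple groups whose Schur multiplier has even order and $2$-rank at least $2$. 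Finally, for sharpness, I would record that $\mathrm{SL}(2,q)$ ($q$ odd) and $2 \cdot A_7$ really do satisfy $Q_H = Z(H) \neq H$, each having a unique involution, so the exclusions in the statement cannot be omitted.
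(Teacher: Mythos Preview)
Your reduction --- $Q_H \trianglelefteq H$ together with $H = H'$ forces $Q_H = H$ as soon as $Q_H \not\le Z(H)$, so it suffices to exhibit a non-central involution --- is exactly what the paper does. The paper then simply cites the main theorem of Griess for the existence of such an involution when $|Z(H)|$ is even (the odd-order case being immediate). Your hands-on route via cyclic or generalized-quaternion Sylow $2$-subgroups and the Gorenstein--Walter classification is essentially a re-derivation of the relevant portion of Griess's result, and it is correct when $|Z(H)| \le 2$ or $|Z(H)|$ is odd.

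The case you rightly flag as the obstacle, $|Z(H)|$ even and greater than $2$, is where the argument cannot in fact be completed: running your own strategy when the Sylow $2$-part of $Z(H)$ is cyclic leads, just as for $|Z(H)| = 2$, to $H/Z(H) \cong A_7$ or $\mathrm{PSL}(2,q)$ with $q$ odd, and inspection of Schur multipliers then produces $H \cong 6\cdot A_6$ or $H \cong 6\cdot A_7$. Each of these has a \emph{unique} involution (inherited from $\mathrm{SL}(2,9)$, respectively $2\cdot A_7$, through the odd-order kernel $C_3$), so $Q_H \cong C_2 < H$; yet neither is excluded by the hypothesis, which only constrains the case $|Z(H)| = 2$. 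Thus the proposition as literally stated fails for these two groups, and the paper's one-line appeal to Griess glosses over the same point. This is harmless for the paper, which applies the proposition only with $|Z(H)| = 2$, and your argument is complete and correct in that case.
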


\begin{proof}
This is a straightforward consequence of the main theorem of \cite{griess}, which implies that there exists an involution $a \in H \setminus Z(H)$ if $|Z(H)|$ is even. Clearly, $a$ also exists if $|Z(H)|$ is odd. Each proper normal subgroup of the quasisimple group $H$ lies in $Z(H)$, and so $Q_H = H$.
\end{proof}

\begin{lem}
\label{lem:excepinvolodd}
Let $G$ be a simple group in $\{G_2(q), {}^2G_2(q), {}^3\!D_4(q), F_4(q), E_8(q)\}$ with $q$ odd \textup{(}so that $q \ge 27$ if $G = {}^2G_2(q)$\textup{)}, let $a$ be an involution of $G$, and let $Y:=C_G(a)$. Then $C_Y(Q_Y) = Z(Y) = \langle a \rangle$.
\end{lem}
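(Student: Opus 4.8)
The plan is to reduce the statement to the known structure of involution centralisers in these groups, together with Lemma~\ref{lem:quasicirc} and Proposition~\ref{prop:quasiinvol}. First I would record, for each $G$ in the list and each involution $a$, the isomorphism type of $Y := C_G(a)$, citing the standard references (Gorenstein--Lyons--Solomon for the exceptional groups of Lie type, and Ree's original work for ${}^2G_2(q)$). In every case $Y$ has a normal subgroup $R$ with $|Y : R| \le 2$, $Z(Y) = \langle a \rangle$, and $C_Y(R) = Z(R) = \langle a \rangle$; in particular the equality $Z(Y) = \langle a \rangle$ is immediate. Since $Q_R$ is characteristic in $R \trianglelefteq Y$, hence normal in $Y$, and $Q_R \le Q_Y$, it then suffices to prove $C_Y(Q_R) = \langle a \rangle$ (the reverse inclusion $\langle a \rangle \le C_Y(Q_Y)$ being clear as $a \in Z(Y)$).

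I would next handle the two situations in which $Q_Y = Y$. If $G = {}^2G_2(q)$ (so $q = 3^{2m+1} \ge 27$) then $Y = \langle a \rangle \times \mathrm{PSL}(2,q)$, and since $\mathrm{PSL}(2,q)$ is generated by its involutions we get $Q_Y = Y$, so $C_Y(Q_Y) = Z(Y) = \langle a \rangle$. If $G = F_4(q)$ or $E_8(q)$ and $a$ is the involution with $C_G(a)$ equal to the quasisimple group $\mathrm{Spin}_9(q)$ or the half-spin group $\mathrm{HSpin}_{16}(q)$ respectively, then $R = Y$ is quasisimple with $Z(Y) = \langle a \rangle$ and $Y/Z(Y)$ a simple group of type $B_4$ or $D_8$ --- in particular neither $A_7$ nor $\mathrm{PSL}(2,q')$ for any odd $q'$ --- so Proposition~\ref{prop:quasiinvol} gives $Q_Y = Y$ and hence $C_Y(Q_Y) = \langle a \rangle$.

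In the remaining cases $R$ is a central product $\mathrm{SL}(2,q) \circ H$ amalgamating the common centre $\langle a \rangle$, where $H$ is a quasisimple group with $Z(H) = \langle a \rangle$ containing a copy of $\mathrm{SL}(2,q)$ with the same centre: $H = \mathrm{SL}(2,q)$ for $G_2(q)$, $H = \mathrm{SL}(2,q^3)$ for ${}^3\!D_4(q)$, $H = \mathrm{Sp}(6,q)$ for $F_4(q)$, and $H$ the simply connected $E_7(q)$ for $E_8(q)$; moreover, when $Y \ne R$, the outer part acts on $R$ as an automorphism that is non-inner on each factor. When $q \ge 5$, the factor $H$ (and also $\mathrm{SL}(2,q)$ itself) is quasisimple, so Lemma~\ref{lem:quasicirc} applied with $S = \mathrm{SL}(2,q)$ and this $H$ yields $Q_R = R$, and then $C_Y(Q_R) = C_Y(R) = \langle a \rangle$. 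When $q = 3$ and $G \ne G_2(3)$, the factor $H \in \{\mathrm{SL}(2,27),\, \mathrm{Sp}(6,3),\, E_7(3)_{\mathrm{sc}}\}$ is still quasisimple, so Lemma~\ref{lem:quasicirc} shows that $Q_R$ is either $R$ or a maximal subgroup of $R$ containing the image $\bar H$ of $H$, with $C_R(Q_R) = \langle a \rangle$ in the latter case; as $\bar H = E(R)$ is characteristic in $R$ and thus normal in $Y$, and the outer element induces a non-inner automorphism on $\bar H$ whereas elements of $R$ induce inner ones, no element of $Y \setminus R$ centralises $\bar H \le Q_R$, so $C_Y(Q_R) \le C_Y(\bar H) \le R$ and therefore $C_Y(Q_R) = C_R(Q_R) = \langle a \rangle$. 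Finally, $G_2(3)$, whose centraliser $(\mathrm{SL}(2,3) \circ \mathrm{SL}(2,3)).2$ lies outside the scope of Lemma~\ref{lem:quasicirc} because $\mathrm{SL}(2,3)$ is soluble, I would treat directly: here $Q_R$ is the central product of the quaternion Sylow $2$-subgroups of the two factors, $C_R(Q_R) = \langle a \rangle$, and no element of $Y \setminus R$ centralises $Q_R$; alternatively, the statement for $G_2(3)$ can simply be verified in Magma, since this group is already within the paper's computational reach.

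The main difficulty I anticipate lies not in the group theory --- which, once the centraliser structure is available, is a short application of Lemma~\ref{lem:quasicirc} and Proposition~\ref{prop:quasiinvol} --- but in assembling that structure uniformly across all five families: identifying the precise quasisimple factors (for instance $\mathrm{HSpin}_{16}(q)$ rather than $\mathrm{Spin}_{16}(q)$, and the simply connected $E_7(q)$), confirming $C_Y(R) = Z(R) = \langle a \rangle$ in every case, and checking that the outer factor of order $2$ acts non-inner on each component. The one genuinely exceptional point is $G_2(3)$, where the small $\mathrm{SL}(2,q)$-factor is not quasisimple, so a brief ad hoc argument (or a computer check) is needed.
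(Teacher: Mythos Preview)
Your proposal is correct and matches the paper's approach: quote GLS for the centraliser structures, use Proposition~\ref{prop:quasiinvol} for the cases where $Q_Y = Y$, apply Lemma~\ref{lem:quasicirc} together with the outer-automorphism argument for the central-product cases, and treat $G_2(3)$ separately. Two small points of comparison: the paper records $Y \cong \mathrm{HSpin}_{16}(q).2$ (not $Y = \mathrm{HSpin}_{16}(q)$) in the relevant $E_8$ case, though this does not disturb your conclusion $Q_Y = Y$; and it explicitly exhibits the required $\mathrm{SL}(2,q) \le H$ with centre $Z(H)$ --- via the diagonal of $\mathrm{Sp}(2,q)^3 \le \mathrm{Sp}(6,q)$ and via $\mathrm{SL}(2,q^7) \le E_7(q)_{\mathrm{sc}}$ --- which is precisely the detail you anticipated having to fill in.
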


\begin{proof}
We observe from \cite[pp.~171--177]{gls3} that $Z(Y) = \langle a \rangle$, and either:
\begin{enumerate}[label={(\alph*)},font=\upshape]
\item $G \cong {}^2G_2(q)$ and $Y \cong C_2 \times \mathrm{PSL}(2,q)$;
\item $G \cong F_4(q)$ and $Y \cong \mathrm{Spin}(9,q)$;
\item $G \cong E_8(q)$ and $Y \cong J \sd 2$, where $J := (\mathrm{Spin}^+(16,q)/A)$, and $A$ is a central subgroup of $\mathrm{Spin}^+(16,q)$ of order $2$; or
\item $Y = R \sd A$, with $|A| = 2$ and $R \cong \mathrm{SL}(2,q) \circ H$, where $(G,H)$ lies in the set \[\{(G_2(q),\mathrm{SL}(2,q)),({}^3\!D_4(q),\mathrm{SL}(2,q^3)),(F_4(q),\mathrm{Sp}(6,q)),(E_8(q),E_7(q)_{\mathrm{sc}})\}.\] Additionally, $Z(Y) = Z(R)$, and $A$ induces an outer automorphism on the image of $H$ under the natural epimorphism $\pi: \mathrm{SL}(2,q) \times H \to R$.
\end{enumerate}
We will divide the remainder of the proof into the cases above. In each case, since $Z(Y) = \langle a \rangle$, it remains to show that $C_Y(Q_Y) = Z(Y)$. Note that this is certainly the case if $Q_Y = Y$.

\medskip

\noindent \textbf{Cases (a)--(c)}. By applying Proposition \ref{prop:quasiinvol} to $Y$ in Case (b) and $J$ in Case (c), we easily deduce that $Q_Y = Y$ in each case.

\medskip

\noindent \textbf{Case (d)}. If $G \cong G_2(3)$, then $Y \cong \mathrm{SO}^+(4,3)$ \cite[p.~125]{wilson}, and $Q_Y = Y$. Suppose therefore that $G \not\cong G_2(3)$, and that $Q_Y < Y$. Notice that $Q_Y$ contains $\langle Q_R, A \rangle = Q_R \sd A$, and thus $Q_R < R$.

In order to apply Lemma~\ref{lem:quasicirc}, we will show that the quasisimple group $H$ contains a subgroup $K$ such that $K \cong \mathrm{SL}(2,q)$ and $Z(H) = Z(K)$. Such $K$ clearly exists if there exists $L \le H$ with $Z(H) = Z(L)$ and $L \cong \mathrm{SL}(2,q^i)$ for some positive integer $i$. Hence this is the case if $H \in \{\mathrm{SL}(2,q),\mathrm{SL}(2,q^3)\}$. If instead $H = E_7(q)_{\mathrm{sc}}$, then $H$ contains a subgroup $L \cong \mathrm{SL}(2,q^7)$, with $Z(H) = Z(L)$ \cite[p.~927, item (3)]{heide}. Finally, if $H = \mathrm{Sp}(6,q)$, then let $V_1$, $V_2$ and $V_3$ be non-degenerate two-dimensional subspaces of the symplectic space $V:=\mathbb{F}_q^6$, such that $V = V_1 \perp V_2 \perp V_3$.
Then the intersection of the stabilisers in $H$ of $V_1$, $V_2$ and $V_3$ is isomorphic to $\mathrm{Sp}(2,q)^3 = \mathrm{SL}(2,q)^3$. The diagonal subgroup of this direct product is isomorphic to $\mathrm{SL}(2,q)$ and has centre $Z(H)$.

Lemma~\ref{lem:quasicirc} now shows that $Q_R$ is a maximal subgroup of $R$, that $(H)\pi \le Q_R$, and that $C_R(Q_R) = Z(R)$. It follows from this first fact that the proper subgroup $Q_Y$ of $Y$ is maximal and equal to $Q_R \sd A$. Since $(H)\pi \le Q_R \le R = (\mathrm{SL}(2,q))\pi(H)\pi$ and $[(\mathrm{SL}(2,q))\pi, (H)\pi] = 1$, each inner automorphism of $Q_R$ acts on $(H)\pi$ as an inner automorphism of $(H)\pi$. Thus the outer automorphism of $(H)\pi$ induced by $A$ does not extend to an inner automorphism of $Q_R$. Therefore, $C_Y(Q_R) = C_R(Q_R) = Z(R)$, which is equal to $Z(Y)$ from the start of the proof. As $Z(Y) \le C_Y(Q_Y) \le C_Y(Q_R)$, we conclude that $C_Y(Q_Y) = Z(Y)$.
\end{proof}

\begin{proof}[Proof of Theorem~\ref{thm:ncexcep}]
Theorem \ref{thm:simpleoddmax} implies that each maximal subgroup of $G$ has even order, and so it suffices by Proposition~\ref{prop:isolvertnc} and Lemma~\ref{lem:ncmaxsimple} to show that each non-involution $x \in G \setminus \{1\}$ is adjacent in $\nc(G)$ to some involution. Let $M$ be a maximal subgroup of $G$ containing $x$. Assume first that $G \not\cong \mathrm{Sz}(q)$, so that $q$ is odd, and let $a$ be an involution of $M$. If $x \sim a$, then we are done. Otherwise, $x \in C_G(a)$. By Lemma~\ref{lem:excepinvolodd}, $a$ is the unique non-identity element of $C_G(a)$ that centralises each involution of $C_G(a)$. Hence $x \sim a'$ for some involution $a' \in C_G(a)$.

Assume now that $G \cong \mathrm{Sz}(q)$. By Theorem 4 on page 122 of \cite{suzuki}, $C_G(y)$ is nilpotent for each $y \in G \setminus \{1\}$. As the only non-abelian finite simple groups with nilpotent maximal subgroups are $\mathrm{PSL}(2,p)$ for various primes $p$ (see \cite[p.~183]{rosemax}), we deduce that $Z(M) = 1$ and $C_M(x) < M$. If $M \cong \mathrm{Sz}(q_0)$, for some proper power $q_0$ of $2$ dividing $q$, then $M = Q_M$. It follows in this case that $M$ contains an involution $a$ not centralising $x$, and so $x \sim a$. If instead $M \not\cong \mathrm{Sz}(q_0)$, then by \S4, page 133 and Theorem 9 on page 137 of \cite{suzuki}, $M$ is conjugate in $G$ to either:
\begin{enumerate}[label={(\alph*)},font=\upshape]
\item the Frobenius group $W:=S \sd C_{q-1}$, where $S$ is a Sylow $2$-subgroup of $G$;
\item the dihedral group $X:=D_{2(q-1)}$; or
\item a Frobenius group $Y_\pm:=C_{q\pm\sqrt{2q}+1} \sd C_4$.
\end{enumerate}

If $M \cong X$, then again $Q_M = M$ and $x \sim a$ for some involution $a \in M$. If instead $M \cong Y_\pm$, then let $N$ be the normal subgroup $C_{q \pm \sqrt{2q}+1}$ of $M$. As $M$ is Frobenius, there exists a complement $H$ of $N$ with $x \notin H$. No non-identity element of $H$ centralises $x$, and so $x$ is adjacent in $\nc(G)$ to the unique involution of $H$.

Suppose finally that $M \cong W$. If $x \notin S$, then $x$ centralises no non-identity element of $S$, and so $x$ is adjacent in $\nc(G)$ to each involution of $S$. Now, $Z(S)$ consists of all elements of $S$ of order at most $2$, and each element of $S \setminus Z(S)$ has order $4$, by Lemma 1 on page 112 and Theorem 7 on page 133 of \cite{suzuki}. Additionally, $G$ has a unique conjugacy class of cyclic subgroups of order $4$ (see the final sentence of page 121 and Proposition 18 on page 126 of \cite{suzuki}), and so if the non-involution $x$ lies in $S$, then it also lies in a maximal subgroup $L \cong Y_\pm$. By the previous paragraph, $x$ is adjacent in $\nc(G)$ to an involution of $L$.
\end{proof}

Magma calculations show that $\diam(\nc(\mathrm{Sz}(8))) = 3$ and $\diam(\nc(G_2(3))) = 2$. We do not know if any finite simple exceptional group $G$ satisfies $\diam(\nc(G)) > 3$. Observe also that the proof of Theorem~\ref{thm:ncexcep} relies on useful properties of the involution centralisers in $G$ (and of $\mathrm{Sz}(q)$ and its maximal subgroups). Indeed, in the $q$ odd case, we have considered precisely those finite simple groups whose involutions are all centralised by maximal subgroups of \emph{maximal rank} (see \cite[Theorem 4.5.1]{gls3} and \cite{liebeckmaxrank}). The involution centralisers for the remaining exceptional groups have more complicated structures (see for example \cite{aschbacherseitz} and \cite[Theorem 4.5.1]{gls3}), so alternative methods would be necessary to prove an analogue of Theorem~\ref{thm:ncexcep} for those groups.

\section{Matrices and subspace stabilisers}
\label{sec:matprel}

We now consider various matrices and subspace stabilisers. The results here will be useful when proving Theorem~\ref{thm:ncsimple} for linear and unitary groups. Let $n$ be a positive integer, $q$ a prime power, and $I_k$ the $k \times k$ identity matrix over $\mathbb{F}_q$, for a positive integer $k$. Additionally, let $E_{i,j}$ be the $n \times n$ matrix with $(i,j)$ entry equal to $1$ and all other entries equal to $0$.

\subsection{Matrices and subspace stabilisers in linear groups}
\label{subsec:matlin}

\begin{defn}[{\cite[p.~148 \& pp.~161--162]{perlis}}]
\label{def:compmat}
Let $f(x):=x^n-\beta_{n}x^{n-1}-\ldots-\beta_2x-\beta_1$ be a (monic) polynomial in $\mathbb{F}_q[x]$. The \emph{companion matrix}\index{companion matrix} of $f$ is the $n \times n$ matrix
\[C(f):=\left( \begin{matrix}
0&I_{n-1}\\
A&B
\end{matrix} \right),\]
where $A:=(\begin{matrix} \beta_1 \end{matrix})$, $B:=(\begin{matrix} \beta_2 & \cdots & \beta_n \end{matrix})$, and $C(f) = A$ when $n = 1$.
If $f$ is irreducible over $\mathbb{F}_q$, then for each positive integer $k$, the \emph{hypercompanion matrix}\index{hypercompanion matrix} of $f^k$ is the 
$kn \times kn$ matrix
\[C_k(f):=\left( \begin{matrix}
C(f)&E_{n,1}&0&\cdots&0\\
0&C(f)&E_{n,1}&\cdots&0\\
\vdots&\vdots&\ddots&\ddots&\vdots\\
0&0&0&\ddots&E_{n,1}\\
0&0&0&\cdots&C(f)
\end{matrix} \right).\]
\end{defn}

Since a subgroup $\langle A \rangle$ of $\mathrm{GL}(n,q)$ is irreducible if and only if the characteristic polynomial $\chi_A$ of $A$ is irreducible, the following lemma will shed light on certain irreducible subgroups.

\begin{lem}
\label{lem:binomialdiv}
Let $a \in \mathbb{F}_q^\times$. Then the set of irreducible factors in $\mathbb{F}_q[x]$ of the binomial $x^{q-1} - a$ is $\{x^{|a|}-b \mid b \in \mathbb{F}_q^\times, b^{(q-1)/|a|} = a\}$.
\end{lem}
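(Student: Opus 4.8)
The plan is to factor the binomial $x^{q-1}-a$ completely over a splitting field and then group the roots into Frobenius orbits. Let $d := |a|$ be the multiplicative order of $a$ in $\mathbb{F}_q^\times$, so $d \mid q-1$. The key observation is that the roots of $x^{q-1}-a$ are exactly the $(q-1)$-th roots of $a$, and these can be described concretely: if $b$ is any fixed $(q-1)$-th root of $a$ in an algebraic closure $\overline{\mathbb{F}_q}$, then the full root set is $\{b\zeta : \zeta^{q-1}=1\}$, i.e. $b$ times the group $\mu_{q-1}$ of $(q-1)$-th roots of unity, which lies in $\mathbb{F}_q$ since $\mathbb{F}_q^\times$ is cyclic of order $q-1$.

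First I would show that every root $b$ of $x^{q-1}-a$ satisfies $b^{q-1} = a$, hence $b$ has order divisible by $d$ but also $b^{d(q-1)/d}$ considerations give $|b| \mid (q-1)d$; more usefully, $b \in \mathbb{F}_{q^k}$ where $k$ is the multiplicative order of $q$ modulo... — actually the cleaner route is: since the roots are $b\mu_{q-1}$ with $\mu_{q-1} \subseteq \mathbb{F}_q^\times$, two roots $b\zeta_1, b\zeta_2$ are Frobenius-conjugate over $\mathbb{F}_q$ if and only if $(b\zeta_1)^q = b\zeta_2$, i.e. $\zeta_2 = \zeta_1 \cdot b^{q-1} = \zeta_1 a$. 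Thus the Frobenius action on the root set, transported to $\mu_{q-1}$ via $b\zeta \leftrightarrow \zeta$, is multiplication by $a$, whose orbits are the cosets of $\langle a \rangle$ in $\mu_{q-1} = \mathbb{F}_q^\times$. Each such orbit has size $d = |a|$, so every irreducible factor of $x^{q-1}-a$ has degree $d$.

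Next I would identify the irreducible factor corresponding to a root $b$: it is the minimal polynomial $\prod_{i=0}^{d-1}(x - b^{q^i}) = \prod_{i=0}^{d-1}(x - b a^i)$. Its constant term (up to sign) is $\pm b^d \prod_{i=0}^{d-1} a^i = \pm b^d a^{d(d-1)/2}$, and since $x^{q-1}-a$ is monic with the factor having degree $d$, one checks the factor has the form $x^d - c$ precisely when all intermediate coefficients vanish; this happens because $b^q = ba$ forces $\{b^{q^i}\} = b\langle a\rangle$, and the elementary symmetric functions $e_j(b, ba, \dots, ba^{d-1}) = b^j e_j(1,a,\dots,a^{d-1})$ vanish for $1 \le j \le d-1$ since $1, a, \dots, a^{d-1}$ are exactly the $d$-th roots of unity (as $a$ has order $d$), whose elementary symmetric functions vanish except $e_0$ and $e_d = \pm 1$. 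So the factor is $x^d - b^d$, and writing $c := b^d$ we have $c^{(q-1)/d} = (b^{q-1})^{d/d}\cdot$ — more directly, $c^{(q-1)/d} = b^{q-1} = a$. Conversely, for any $c \in \mathbb{F}_q^\times$ with $c^{(q-1)/d} = a$, the polynomial $x^d - c$ is irreducible (its roots have order exactly... one verifies irreducibility by noting any root generates $\mathbb{F}_{q^d}$ over $\mathbb{F}_q$) and divides $x^{q-1}-a$ since $(x^d)^{(q-1)/d} - a = x^{q-1}-a$ vanishes at its roots. Counting then confirms we have found all factors.

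The main obstacle I anticipate is verifying the irreducibility of $x^d - c$ over $\mathbb{F}_q$ when $c^{(q-1)/d} = a$ has order $d$ — one must rule out proper subfield factorisations, which amounts to checking that no root lies in $\mathbb{F}_{q^e}$ for a proper divisor $e$ of $d$; this follows because a root $b$ satisfies $b^{q^e} = b a^e$, so $b \in \mathbb{F}_{q^e}$ would force $a^e = 1$, contradicting $|a| = d > e$. The symmetric-function computation showing the factor is a pure binomial is routine but needs the clean statement that $\prod_{i=0}^{d-1}(x - a^i) = x^d - 1$ when $|a| = d$. Everything else is bookkeeping with cyclic groups, and I would keep the write-up to the Frobenius-orbit description plus these two verifications.
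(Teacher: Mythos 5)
Your proposal is correct, but it takes a genuinely different route from the paper. The paper never leaves $\mathbb{F}_q$: it writes $x^{q-1}-a = f(x^r)$ with $r = |a|$, $t = (q-1)/r$ and $f(x) = x^t - a$, splits $f$ into linear factors over $\mathbb{F}_q$ (the roots $b_i$ being the $t$ preimages of $a$ under the $t$-th power map), and then proves each $x^r - b_i$ irreducible by invoking the standard Lidl--Niederreiter criterion for irreducibility of binomials, whose hypotheses ($r$ divides $|b_i|$, $\gcd(r,(q-1)/|b_i|)=1$, and the condition on $4$) are verified by a short computation with power maps in the cyclic group $\mathbb{F}_q^\times$. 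You instead pass to the splitting field and read everything off the Frobenius action: the root set is $b\,\mathbb{F}_q^\times$, Frobenius acts as multiplication by $a$ because $b^q = ab$, so the orbits are cosets of $\langle a\rangle$ of size $d=|a|$; the minimal polynomial of an orbit collapses to the binomial $x^d - b^d$ via the vanishing of the intermediate elementary symmetric functions of the $d$-th roots of unity; and irreducibility of each candidate $x^d - c$ is checked from first principles, since a root $\beta$ satisfies $\beta^{q^e} = a^e\beta$ and hence has degree exactly $d$ over $\mathbb{F}_q$. Your argument is self-contained (no external irreducibility criterion) and explains conceptually why the irreducible factors are again pure binomials, at the cost of being somewhat longer; the paper's is shorter but outsources the key step to a cited theorem. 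Both inclusions of the claimed set equality are covered by your two directions, and the degree count $(q-1)/d$ orbits of size $d$ confirms completeness. The only blemishes are presentational: the write-up contains abandoned false starts (e.g.\ the aborted sentence about $|b| \mid (q-1)d$), and ``Frobenius-conjugate'' should refer to the full orbit under powers of Frobenius rather than a single application, though your subsequent orbit description is the correct one.
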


\begin{proof}
For each positive integer $k$, let $\phi_k$ be the homomorphism $s \mapsto s^k$ of $\mathbb{F}_q^\times$, and let $H_k$ be the image of $\phi_k$, of order $(q-1)/(k,q-1)$. Additionally, let $r:=|a|$ and $t:=(q-1)/r$. Then $H_t = \langle a \rangle$. Since $|\ker(\phi_t)| = t$, there are exactly $t$ distinct roots $b_1, \ldots, b_t \in \mathbb{F}_q^\times$ of the binomial $f(x):=x^t-a$. Hence $f(x) = \prod_{i=1}^t (x^t-b_i)$, and $x^{q-1}-a = f(x^r) = \prod_{i=1}^t (x^r-b_i).$

It remains to show that the binomial $x^r-b_i$ is irreducible over $\mathbb{F}_q$ for each $i$. Since $b_i^t = a$, we see that $r=|a|$ divides $|b_i|$. Additionally, if $r$ is divisible by $4$, then so is $q-1$. It therefore suffices by \cite[Theorems 3.3 \& 3.35]{lidl} to prove that $r$ is coprime to $u:=(q-1)/|b_i|$.

Since $|H_u| = |b_i|$, there exists $c_i \in \mathbb{F}_q^\times$ with $c_i^u = b_i$, and hence $c_i^{ut} = a$. Thus $a \in H_{ut}$, and so $r$ divides $|H_{ut}| = {(q-1)/(ut,q-1)} = tr/(ut,tr) = r/(u,r)$. Hence $(u,r) = 1$.
\end{proof}

Recall that matrices $A, B \in \mathrm{GL}(n,q)$ are similar if and only if $\chi_A = \chi_B$.

\begin{lem}
\label{lem:irredmatlowdim}
Let $A \in \mathrm{SL}(n,q)$, and suppose that $A^{q^2-1} \in Z(\mathrm{SL}(n,q))$, and that $\langle A \rangle$ acts irreducibly on $\mathbb{F}_q^n$. Then $n \in \{1,2\}$. If, in addition, $n = 2$ and $A^{q-1} \in Z(\mathrm{SL}(2,q))$, then $q \equiv 3 \pmod 4$.
\end{lem}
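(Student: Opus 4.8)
The plan is to exploit the standard correspondence between the action of $\langle A\rangle$ on $\mathbb{F}_q^n$ and the factorisation of the characteristic polynomial $\chi_A$: since $\langle A\rangle$ is irreducible, $\chi_A$ is irreducible of degree $n$ over $\mathbb{F}_q$. The hypothesis $A^{q^2-1}\in Z(\mathrm{SL}(n,q))$ means $A^{q^2-1}=\lambda I_n$ for some $\lambda\in\mathbb{F}_q^\times$ with $\lambda^n=1$, so $A$ is a root of $x^{q^2-1}-\lambda$. Hence $\chi_A$ divides this binomial, and Lemma~\ref{lem:binomialdiv} (applied with $q$ replaced by $q^2$, using that $A$ lies in $\mathrm{GL}(n,q)\le\mathrm{GL}(n,q^2)$ and that the $\mathbb{F}_q$-irreducible factor $\chi_A$ factors over $\mathbb{F}_{q^2}$ into irreducibles of equal degree) will force a strong constraint on $n$. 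Concretely, the $\mathbb{F}_{q^2}$-irreducible factors of $x^{q^2-1}-\lambda$ are binomials $x^{|\lambda|}-b$, so each has degree dividing $q^2-1$; more to the point, every such factor has the form $x^m-b$ where $m=|\lambda|$ divides $q^2-1$. The $\mathbb{F}_q$-irreducible polynomial $\chi_A$ of degree $n$ must then be a product of some of these binomials over $\mathbb{F}_{q^2}$, each of degree $m$, so $m\mid n$; but also $n\le m$ is forced because a degree-$n$ irreducible over $\mathbb{F}_q$ splits over $\mathbb{F}_{q^2}$ into factors of degree $n/\gcd(n,2)\in\{n,n/2\}$, and these must equal $m$. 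Combining, $n\mid$ something dividing $q^2-1$ is too weak; instead I would argue directly via field extensions as below.

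The cleaner route: let $\mu$ be an eigenvalue of $A$ in $\overline{\mathbb{F}_q}$. Irreducibility of $\chi_A$ gives $[\mathbb{F}_q(\mu):\mathbb{F}_q]=n$, so $\mu$ generates $\mathbb{F}_{q^n}$ over $\mathbb{F}_q$. From $A^{q^2-1}=\lambda I_n$ we get $\mu^{q^2-1}=\lambda\in\mathbb{F}_q^\times$, so $\mu^{q^2-1}$ has order dividing $q-1$, whence $\mu^{(q^2-1)(q-1)}=1$ and $\mu\in\mathbb{F}_{q^2}^\times$ — indeed $\mu^{q^2-1}\in\mathbb{F}_q$ already shows $\mu$ lies in a field where raising to the $q^2$ power is an automorphism fixing $\mu^{q^2-1}$; more simply, $\mu^{q^2}=\lambda\mu$, and then $\mu^{q^4}=\lambda^{q^2}\mu^{q^2}=\lambda^{q^2}\lambda\mu$, and since $\lambda\in\mathbb{F}_q$ we have $\lambda^{q^2}=\lambda$, so $\mu^{q^4}=\lambda^2\mu$; iterating, the orbit of $\mu$ under the Frobenius $x\mapsto x^q$ has size dividing... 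I would instead observe $\mu^{q^2}=\lambda\mu$ implies $\mathbb{F}_q(\mu)=\mathbb{F}_q(\mu)$ is stable under $x\mapsto x^{q^2}$ in an obvious way and that $\mu\in\mathbb{F}_{q^2}(\mu^{q^2-1})^{1/(q^2-1)}\subseteq$ a field of degree at most $2$ over $\mathbb{F}_q$. Thus $n=[\mathbb{F}_q(\mu):\mathbb{F}_q]\le 2$, giving $n\in\{1,2\}$.

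For the final assertion, suppose $n=2$ and additionally $A^{q-1}\in Z(\mathrm{SL}(2,q))$, so $A^{q-1}=\delta I_2$ with $\delta^2=1$, i.e. $\delta\in\{1,-1\}$. With $\mu$ an eigenvalue of $A$ generating $\mathbb{F}_{q^2}$, we get $\mu^{q-1}=\delta\in\mathbb{F}_q$. If $\delta=1$ then $\mu\in\mathbb{F}_q$, contradicting $[\mathbb{F}_q(\mu):\mathbb{F}_q]=2$; hence $\delta=-1$, so $\mu^{q-1}=-1$, meaning $\mu$ has even order $2m$ with $m\mid q-1$ and $\mu^m$ of order $2$, forcing $\mu^{q-1}=\mu^{\text{(order)}/2\cdot k}$... concretely $\mu^{2(q-1)}=1$ but $\mu^{q-1}\ne 1$, so the order of $\mu$ divides $2(q-1)$ but not $q-1$, hence $4\mid 2(q-1)$ is not quite forced; rather, $\mu\in\mathbb{F}_{q^2}^\times$ which is cyclic of order $q^2-1$, and $\mu^{q-1}=-1$ forces the order of $\mu$ to be exactly $2(q-1)/\gcd$ with $2(q-1)\mid q^2-1=(q-1)(q+1)$, i.e. $2\mid q+1$, always true. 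The real point: $-1$ is the unique element of order $2$ in $\mathbb{F}_q^\times$, and $\mu^{q-1}=-1$ means $\mu$ is \emph{not} a $(q-1)$-th power root of $1$, so $\mu\notin\mathbb{F}_q$; writing $\mu=\zeta^j$ for a generator $\zeta$ of $\mathbb{F}_{q^2}^\times$, the condition $\mu^{q-1}=-1=\zeta^{(q^2-1)/2}$ gives $j(q-1)\equiv (q^2-1)/2\pmod{q^2-1}$, i.e. $j\equiv (q+1)/2\pmod{q+1}$, which requires $q+1$ even (true) and then $j$ odd iff $(q+1)/2$ is odd iff $q\equiv 1\pmod 4$. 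Since $\mu$ must generate $\mathbb{F}_{q^2}$, equivalently $\mu\notin\mathbb{F}_q$, i.e. $(q+1)\nmid j(q-1)$... I would finish by instead using $\det A=1$: the eigenvalues of $A$ are $\mu$ and $\mu^q$ (Frobenius conjugate), so $\mu^{q+1}=\det A=1$, hence $\mu$ has order dividing $q+1$; combined with $\mu^{q-1}=-1$ we get $\mu^{2}=\mu^{(q+1)-(q-1)}=1\cdot(-1)^{-1}=-1$, so $\mu$ has order $4$, forcing $4\mid q+1$, i.e. $q\equiv 3\pmod 4$. This last computation — $\mu^{q+1}=1$ from $\det A=1$ together with $\mu^{q-1}=-1$ — is the crux and is short.

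\textbf{Main obstacle.} The genuinely delicate point is the bound $n\le 2$: one must correctly track that $A^{q^2-1}=\lambda I_2$ with $\lambda\in\mathbb{F}_q$ (not merely $\mathbb{F}_{q^2}$) and use this to pin the eigenvalue $\mu$ into $\mathbb{F}_{q^2}$, invoking Lemma~\ref{lem:binomialdiv} over $\mathbb{F}_{q^2}$ to see that $\chi_A$'s factors over $\mathbb{F}_{q^2}$ are linear. Everything after that ($n\in\{1,2\}$, and the $q\equiv 3\pmod 4$ refinement) is a clean cyclic-group computation in $\mathbb{F}_{q^2}^\times$ using $\det A=1$.
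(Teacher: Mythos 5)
The main assertion, $n \in \{1,2\}$, is where your argument breaks down. Your ``cleaner route'' rests on the claim that an eigenvalue $\mu$ of $A$ satisfying $\mu^{q^2-1}=\lambda\in\mathbb{F}_q^\times$ must lie in $\mathbb{F}_{q^2}$. That inference is false: the relation $\mu^{q^2}=\lambda\mu$ only gives $\mu^{q^{2k}}=\lambda^k\mu$, hence $\mu\in\mathbb{F}_{q^{2|\lambda|}}$ and $n\mid 2|\lambda|$, which does not bound $n$ by $2$. (Your intermediate step ``$\mu^{(q^2-1)(q-1)}=1$, whence $\mu\in\mathbb{F}_{q^2}^\times$'' conflates ``order dividing $(q^2-1)(q-1)$'' with ``order dividing $q^2-1$''.) A concrete witness: take $q=3$ and $\mu\in\mathbb{F}_{81}^\times$ of order $16$; then $\mu^{q^2-1}=\mu^8=-1\in\mathbb{F}_3^\times$, $\mu$ has degree $4$ over $\mathbb{F}_3$, and the companion matrix of its minimal polynomial is an irreducible $A\in\mathrm{GL}(4,3)$ with $A^{q^2-1}=-I_4$ scalar. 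The only reason this does not contradict the lemma is that $\det A=\mu^{1+3+9+27}=\mu^8=-1$, so $A\notin\mathrm{SL}(4,3)$ --- but your argument for $n\le 2$ never invokes $\det A=1$, so it would prove a false statement. The paper's proof uses the determinant essentially: after factoring $\chi_A$ over $\mathbb{F}_{q^2}$ via Lemma~\ref{lem:binomialdiv} as $x^{|a|}-b$ ($n$ odd) or $(x^{|a|}-b_1)(x^{|a|}-b_2)$ ($n$ even), the condition $\det A=1$ forces $b=1$, hence $a=1$ and $n=1$, in the odd case, and forces $b_2=b_1^{-1}$, hence $a=a^{-1}$ and $n=2|a|\le 4$, in the even case; and even then $n=4$ survives and must be excluded by a separate argument (the paper's appeal to Carlitz, i.e.\ that a reciprocal quartic $x^4-cx^2+1$ always splits over $\mathbb{F}_{q^2}$ and so is never irreducible over $\mathbb{F}_q$ for $q$ odd). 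None of this appears in your proposal; you correctly flag $n\le 2$ as the delicate point but do not supply a valid argument for it, and your first paragraph's binomial-factor discussion stops exactly where the determinant condition would need to enter.

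By contrast, your closing computation for the refinement is correct and is a genuine (and arguably slicker) alternative to the paper's: with $n=2$, from $\det A=\mu^{q+1}=1$ and $A^{q-1}=\delta I_2$ with $\delta=\pm1$, you rule out $\delta=1$ (else $\mu\in\mathbb{F}_q$, contradicting irreducibility) and deduce $\mu^2=\mu^{(q+1)-(q-1)}=-1$, so $\mu$ is a primitive fourth root of unity outside $\mathbb{F}_q$, forcing $q\equiv 3\pmod 4$. This is equivalent to the paper's conclusion that $\chi_A=x^2+1$, obtained there by a second application of Lemma~\ref{lem:binomialdiv}. It does not, however, repair the gap in the main part.
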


\begin{proof}Since $A^{q^2-1} \in Z(\mathrm{SL}(n,q))$, there exists $a \in \mathbb{F}_q^\times$ such that $A^{q^2-1} - aI_n = 0$. Thus $\chi_A$, which is irreducible over $\mathbb{F}_q$, divides $x^{q^2-1} - a \in \mathbb{F}_q[x]$. Furthermore, $\mathbb{F}_{q^n}$ is the splitting field for $\chi_A$ over $\mathbb{F}_q$, and $A$ is similar to the companion matrix $C(\chi_A)$. To prove the required result, we will study $\chi_A$ as a polynomial in $\mathbb{F}_{q^2}[x]$. By Lemma~\ref{lem:binomialdiv}, the set of irreducible factors in $\mathbb{F}_{q^2}[x]$ of $x^{q^2-1} - a$ is $\{x^{|a|}-b \mid b \in \mathbb{F}_{q^2}^\times, b^{(q^2-1)/|a|} = a\}$.

If $n$ is odd, then $\chi_A$ is irreducible over $\mathbb{F}_{q^2}$. Thus $n = |a|$, and $\chi_A = x^n-b$ for some $b \in \mathbb{F}_q^\times$ with $b^{(q^2-1)/n} = a$. Since $\det(C(x^n-b)) = b$, we deduce that $b = 1$. Hence $a = 1$ and $n = 1$.

If instead $n$ is even, then each irreducible factor of $\chi_A$ in $\mathbb{F}_{q^2}$ has degree $n/2$. In particular, $|a| = n/2$, and there exist $b_1,b_2 \in \mathbb{F}_{q^2}^\times$ such that \[\chi_A(x) = (x^{n/2}-b_1)(x^{n/2}-b_2)= {x^n - (b_1+b_2)x^{n/2} + b_1b_2},\] with $b_1^{(q^2-1)/(n/2)} = b_2^{(q^2-1)/(n/2)} = a$ and $b_1b_2, b_1+b_2 \in \mathbb{F}_q$. Here, $\det(C(\chi_A)) = b_1b_2$, and thus $b_2 = b_1^{-1}$. Therefore, $a = a^{-1}$, and hence $n/2 = |a| \in \{1,2\}$. In particular, if $q$ is even, then $a \in \mathbb{F}_q^\times$ has odd order, and $n = 2$. Otherwise, arguing as in \cite[pp.~569--570]{carlitz} shows that the irreducible polynomial $x^n-(b_1+b_2)x^{n/2} + 1 \in \mathbb{F}_q[x]$ cannot have degree $4$, and again $n = 2$.

Finally, if $n = 2$ and $A^{q-1} \in Z(\mathrm{SL}(2,q))$, then $A^{q-1} - cI_2 = 0$ for some $c \in \{-1,1\}$. Hence the irreducible polynomial $\chi_A$ divides $x^{q-1} - c$. It follows from Lemma~\ref{lem:binomialdiv} and the previous paragraph that $\chi_A(x) = x^2+1$, which is irreducible if and only if $q \equiv 3 \pmod 4$.
\end{proof}

Let $H:=\mathrm{SL}(n,q)$, with $n \ge 2$, and $q \ge 4$ if $n = 2$, so that $H/Z(H)$ is simple. Additionally, let $V:=\mathbb{F}_q^n$ be the natural module for $H$, and $H_X$ the stabiliser in $H$ of a subspace $X$ of $V$.

\begin{lem}
\label{lem:inter1stab}
Let $X$ and $Y$ be distinct one-dimensional subspaces of $V$. Then:
\begin{enumerate}[label={(\roman*)},font=\upshape]
\item \label{inter1stab:p1} no commutator in $H_X$ is a non-identity scalar matrix; 
\item $C_H(H_X) = Z(H)$; \label{inter1stab:p4} and
\item \label{inter1stab:p3} $C_H(H_X \cap H_Y) = \begin{cases} H_X \cap H_Y, & \text { if } n = 2, \text { or } n = 3 \text { and } q = 2,\\ Z(H), & \text{ otherwise.}\end{cases}$
\end{enumerate}
\end{lem}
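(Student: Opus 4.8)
The plan is to work explicitly with the stabilisers $H_X$ and $H_X \cap H_Y$ in a coordinate system adapted to $X$ and $Y$. For part (i), writing $X = \langle e_1 \rangle$ with respect to a basis $e_1, \ldots, e_n$, the group $H_X$ consists of block-lower-triangular matrices $\begin{pmatrix} \lambda & 0 \\ * & B \end{pmatrix}$ with $\lambda \det(B) = 1$; a direct computation of a general commutator shows its $(1,1)$-entry equals $1$, so a commutator that is scalar must be the identity. Part (ii) is a standard Schur-type argument: any matrix centralising $H_X$ must in particular commute with all transvections fixing $X$ and with a diagonalisable element acting irreducibly (or near-irreducibly) on a complement, forcing it to be scalar; since it also lies in $H$ it lies in $Z(H)$.

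For part (iii), I would take $X = \langle e_1 \rangle$ and $Y = \langle e_2 \rangle$, so that $H_X \cap H_Y$ is the stabiliser of the decomposition into the line $X$, the line $Y$, and (implicitly, via its action) the rest. Concretely $H_X \cap H_Y$ contains all matrices of the form $\mathrm{diag}(\alpha,\beta) \oplus C$ together with the ``unipotent'' parts fixing both lines. The centraliser computation splits naturally by the dimension $n$. When $n = 2$, $H_X \cap H_Y$ is the full diagonal torus of $\mathrm{SL}(2,q)$, which is self-centralising, giving the first case. When $n = 3$ and $q = 2$, the diagonal torus is trivial and $H_X \cap H_Y$ reduces to a small abelian unipotent group (isomorphic to $C_2 \times C_2$), which one checks directly is self-centralising in $\mathrm{SL}(3,2)$. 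In all remaining cases I would exhibit enough elements of $H_X \cap H_Y$ — a diagonal element with three distinct eigenvalues when $q > 2$, or when $q = 2$ a diagonal-plus-an-irreducible-block element on the $(n-2)$-dimensional complement (using $n \ge 4$, or $n = 3$ with a nontrivial torus when $q \ge 3$) — to pin down any centralising matrix as diagonal, and then force it to be scalar by also using the unipotent elements of $H_X \cap H_Y$. Here Lemma~\ref{lem:irredmatlowdim} is the tool that guarantees a suitable irreducible (or scalar-modulo-small-power) block exists on the complement in the relevant dimensions, ruling out spurious centralising elements.

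The main obstacle I expect is the careful case analysis in part (iii) for small $n$ and small $q$: the cases $n = 2$ and $(n,q) = (3,2)$ are genuinely exceptional because the relevant torus is too small to separate eigenvalues, and one has to verify self-centrality by hand rather than by the generic argument. The rest of the proof — parts (i) and (ii), and the generic case of (iii) — is routine linear algebra once the coordinates are fixed, so the write-up should concentrate the effort on isolating exactly which $(n,q)$ force the ``$H_X \cap H_Y$'' case and confirming the dichotomy is as stated.
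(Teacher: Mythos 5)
Your overall strategy is the same as the paper's: fix coordinates so that $H_X$ is block-triangular, use the $(1,1)$-entry homomorphism $H_X \to \mathbb{F}_q^\times$ for (i), isolate the exceptional cases $n=2$ and $(n,q)=(3,2)$ of (iii) (your identification of $H_X\cap H_Y$ as the self-centralising diagonal torus, resp.\ a self-centralising $C_2\times C_2$ of transvections, is correct), and in the generic case pin a centralising matrix $W$ down to diagonal and then to scalar using explicit elements of $H_X\cap H_Y$. The one step that would fail as literally stated is your generic mechanism: a diagonal element of $H_X\cap H_Y$ with $n$ distinct eigenvalues requires $q>n$, so it does not exist for, e.g., $(n,q)=(3,3)$ or $(5,3)$, and your fallback (an irreducible block on the $(n-2)$-dimensional complement) is only invoked for $q=2$, leaving $2<q\le n$ uncovered; also, Lemma~\ref{lem:irredmatlowdim} is not the right tool for producing such a block. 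The paper sidesteps this uniformly: for $n=3$ it uses the \emph{pair} of diagonal matrices $\mathrm{diag}(1,b,b^{-1})$ and $\mathrm{diag}(b,1,b^{-1})$, whose common centraliser is already diagonal, and for $n\ge 4$ it uses the transvections $I_n+E_{r,s}$ with $r\ge 3$, which lie in $H_X\cap H_Y$ for every $q$; substituting either device into your argument (both fit your ``exhibit enough elements'' framing) closes the gap, after which the final scalar step via a unipotent element goes through as you describe, and (ii) follows.
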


\begin{proof} Up to conjugacy by a fixed element of $H$, the stabiliser $H_X$ consists of the matrices in $H$ with a first row of the form $\begin{matrix}(\lambda_1 & 0 & 0 & \cdots & 0)\end{matrix}$, with $\lambda_1 \in \mathbb{F}_q^\times$, and $H_Y$ is the set of matrices with a second row of the form $\begin{matrix}(0 & \lambda_2 & 0 & \cdots & 0)\end{matrix}$, with $\lambda_2 \in \mathbb{F}_q^\times$. The restriction of a matrix in $H_X$ to its $(1,1)$ entry is a homomorphism from $H_X$ to the abelian group $\mathbb{F}_q^\times$, and its kernel clearly contains all commutators in $H_X$. Thus (i) holds. We split the proof of (ii)--(iii) into two cases. In each case, let $W \in C_H(H_X \cap H_Y)$.

\medskip

\noindent \textbf{Case (a)}: $n = 2$. Let $\omega$ be a primitive element of $\mathbb{F}_q$, $A:=\mathrm{diag}(\omega,\omega^{-1}) \in H_X \cap H_Y$, and $D:=I_2+E_{2,1} \in H_X$. The equality $AW = WA$ gives $W \in H_X \cap H_Y$, which is abelian. Hence we obtain (iii). If $W \in C_H(X)$, then $DW = WD$, and we deduce that $W \in Z(H)$, hence (ii).

\medskip

\noindent \textbf{Case (b)}: $n \ge 3$. It is easy to check the result if $n = 3$ and $q = 2$, so assume otherwise. If $n = 3$, then let $b \in \mathbb{F}_q^\times \setminus \{1\}$ and $B:=\mathrm{diag}(1,b,b^{-1}), B':=\mathrm{diag}(b,1,b^{-1}) \in H_X \cap H_Y$. We deduce from the equalities $BW = WB$ and $B'W = WB'$ that $W$ is diagonal. If instead $n \ge 4$, then let $r$ and $s$ be distinct integers with $3 \le r \le n$ and $1 \le s \le n$. Then $C_{r,s}:=I_n+E_{r,s} \le H_X \cap H_Y$, and the equalities $C_{r,s}W = WC_{r,s}$ (for all $r$ and $s$) imply that $W$ is again diagonal.

In each case, let $F \in H_X \cap H_Y$ be the matrix obtained from $I_n$ by setting all entries in the final row to $1$. As $W$ is diagonal, the equality $WF = FW$ yields $W \in Z(H)$. Thus we have proved (iii), and (ii) follows immediately.
\end{proof}

\subsection{Subspace stabilisers in unitary groups}
\label{subsec:unitaryspaces}

Throughout this subsection, let $n \ge 3$, with $q \ge 3$ if $n = 3$, so that $G:=\mathrm{PSU}(n,q)$ is simple. Additionally, let $V$ be the natural module $\mathbb{F}_{q^2}^n$ for $\mathrm{SU}(n,q)$, equipped with the unitary form with Gram matrix $I_n$. Then a matrix $A \in \mathrm{SL}(n,q^2)$ lies in $\mathrm{SU}(n,q)$ if and only if $A A^{\sigma T} = I_n$, where $\sigma$ is the automorphism $\alpha \mapsto \alpha^q$ of $\mathbb{F}_{q^2}$. The following theorem will be useful when considering $\nc(G)$. Here, $G_{(\Delta)}$ denotes the pointwise stabiliser in $G$ of a set $\Delta$ of subspaces of $V$.

\begin{thm}
\label{thm:unisubspaces}
Let $\Delta$ be a set of $k$-dimensional subspaces of $V$.
\begin{enumerate}[label={(\roman*)},font=\upshape]
\item \label{unisubspaces1} Suppose that $|\Delta| < \lceil n/k \rceil$. If $(q+1) \nmid n$, or if $k \nmid (n-1)$, then $G_{(\Delta)} \ne 1$.
\item Suppose that $|\Delta| < \lceil n/k \rceil-1$. Then $G_{(\Delta)} \ne 1$.
\end{enumerate}
\end{thm}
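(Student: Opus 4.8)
The plan is to show that the pointwise stabiliser in $\mathrm{SU}(n,q)$ of a small set $\Delta$ of $k$-dimensional subspaces contains a non-scalar element, and then to pass to $G = \mathrm{PSU}(n,q)$. The starting point is a dimension count: if $\Delta = \{X_1, \ldots, X_m\}$ with $m = |\Delta|$, then $\dim(X_1 + \cdots + X_m) \le mk$, so provided $mk < n$ there is a nonzero vector $v$ (in fact a whole subspace) on which every $X_i$ imposes no constraint — more precisely, the sum $W := X_1 + \cdots + X_m$ is a proper subspace of $V$. The subtlety is that to build a \emph{unitary} transvection or a unipotent element fixing all of $W$ pointwise, one wants $W$ to be totally isotropic, or at least to have a nonzero vector in $W^\perp$ that interacts well with $W$; this is where the hypotheses relating $k$, $n$ and $q+1$ enter, since the Hermitian form can force $W$ to be non-degenerate when $mk$ is close to $n$.

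The key steps, in order. First I would reduce from $G$ to $\mathrm{SU}(n,q)$: since $Z(\mathrm{SU}(n,q))$ has order $(n, q+1)$, it suffices to produce an element of $\mathrm{SU}(n,q)_{(\Delta)}$ that is not scalar, EXCEPT when that element might be a nontrivial scalar, so I would aim for a unipotent (hence non-scalar, non-central) element throughout. Second, set $W = \sum_i X_i$, so $\dim W \le mk$. For part (ii), the hypothesis $m < \lceil n/k \rceil - 1$ gives $mk \le n - k - 1 < n - 1$ roughly, leaving enough room: $\dim W^\perp = n - \dim W \ge n - mk$ is comfortably large, and I would find a totally isotropic line $\langle u \rangle \subseteq W^\perp \cap W^{\perp\perp}$-type configuration, or more simply a pair $u \in W^\perp$, and a hyperbolic partner, to write down a unitary transvection $t_{u,\lambda}: x \mapsto x + \lambda \langle x, u\rangle u$ (with $\lambda^q + \lambda = 0$) that fixes $W$ pointwise because $\langle x, u \rangle = 0$ for $x \in W$. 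This requires $u$ isotropic, i.e. $W^\perp$ must contain an isotropic vector; a non-degenerate Hermitian space of dimension $\ge 2$ over $\mathbb{F}_{q^2}$ always does, and $\dim W^\perp \ge 2$ follows from the dimension bound, so part (ii) should go through cleanly. Third, for part (i) the bound is weaker ($m < \lceil n/k\rceil$, so $\dim W$ could be as large as $n-1$), and here $W^\perp$ might be a single anisotropic line; this is exactly the case the extra hypotheses ``$(q+1) \nmid n$ or $k \nmid (n-1)$'' are designed to exclude or handle. I would argue: if $\dim W = n - 1$ then $\dim W^\perp = 1$; either $W$ is non-degenerate (then $W^\perp$ is an anisotropic line and $W$ is itself a non-degenerate unitary space of dimension $n-1$, and $\mathrm{SU}(W) \hookrightarrow \mathrm{SU}(n,q)_{(\Delta)}$ is nontrivial as long as $n - 1 \ge 2$, giving the result unless we get only scalars — which is where $(n,q+1)$ divisibility is relevant), or $W$ is degenerate (then $W \cap W^\perp \ne 0$, we get an isotropic vector in $W^\perp$, and a transvection works as before). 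The case analysis on whether $W$ is degenerate, combined with tracking when the resulting subgroup could be central, is the technical heart.

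The main obstacle I anticipate is the borderline case of part (i) where $\dim W = n-1$ and $W$ is non-degenerate: then the stabiliser of $\Delta$ contains (a conjugate of) $\mathrm{SU}(n-1, q) \times$ (torus on $W^\perp$), and one must check this is nontrivial \emph{modulo scalars} — $\mathrm{SU}(n-1,q)$ is trivial only for $(n-1, q) = (1, \cdot)$, so for $n \ge 3$ this is fine unless $n - 1$ forces everything into $Z$, which connects to whether $k \mid (n-1)$ (controlling the orbit structure that forces $W$ to have this exact dimension) and whether $(q+1) \mid n$ (controlling the scalars). Disentangling precisely which combination of $(n, k, q)$ lands in the bad case, and confirming the stated hypotheses exactly rule it out, will require careful bookkeeping; I would organise it as: (1) if $mk \le n - 2$, done by a transvection (covers part (ii) and most of part (i)); (2) if $mk = n - 1$, split on degeneracy of $W$, and in the non-degenerate subcase invoke that $k \nmid (n-1)$ or $(q+1)\nmid n$ prevents $\Delta$ from being forcedly a ``spanning'' configuration or prevents the stabiliser from being central. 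I would lean on standard facts about unitary transvections and Witt's theorem rather than coordinates wherever possible.
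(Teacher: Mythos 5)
Your overall architecture (dimension count on $W=\sum_{X\in\Delta}X$, reduction to the codimension-$1$ and codimension-$2$ cases, unitary transvections when $W^{\perp}$ contains an isotropic vector) matches the paper's, and your argument for part (ii) is sound: $|\Delta|<\lceil n/k\rceil-1$ forces $\dim W\le n-2$, so $W^{\perp}$ has dimension at least $2$ and therefore contains an isotropic vector $u$ (either because it is non-degenerate of dimension $\ge 2$, or because its radical $W\cap W^{\perp}$ is nonzero), and the transvection $x\mapsto x+\lambda\langle x,u\rangle u$ is a non-scalar element of $\mathrm{SU}(n,q)_{(\Delta)}$. This is exactly Corollary~\ref{cor:unicodimtwo} together with Case (b) of Lemma~\ref{lem:unicodimone}. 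Your arithmetic observation for the alternative hypothesis of part (i) is also essentially right, though you should make it precise: $\dim W\le k(\lceil n/k\rceil-1)$, and this bound equals $n-1$ only when $n\equiv 1\pmod k$; so $k\nmid(n-1)$ forces $\dim W\le n-2$ and reduces part (i) to the codimension-$2$ argument.

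The genuine gap is in your treatment of the borderline case of part (i) where $\dim W=n-1$ and $W$ is non-degenerate. You claim $\mathrm{SU}(W)\hookrightarrow\mathrm{SU}(n,q)_{(\Delta)}$, but this is false: an element of $\mathrm{SU}(W)$ (extended by the identity on $W^{\perp}$) stabilises $W$, yet it has no reason to stabilise the individual $k$-dimensional subspaces $X_i\subseteq W$, and $G_{(\Delta)}$ must fix each member of $\Delta$ setwise. Since the $X_i$ are arbitrary subspaces of $W$, the only elements you can guarantee lie in $G_{(\Delta)}$ from the inclusion $\langle\Delta\rangle\subseteq W$ are those acting as a \emph{scalar} on $W$. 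When $W^{\perp}=\langle w\rangle$ is anisotropic, such an element is forced to be $\mathrm{diag}(\alpha^{-(n-1)},\alpha,\dots,\alpha)$ (in a basis adapted to $W^{\perp}\perp W$) with $\alpha^{q+1}=1$, and it is non-scalar precisely when $\alpha^{n}\ne 1$; a suitable $\alpha$ exists if and only if $(q+1)\nmid n$. This is the content of Case (a) of Lemma~\ref{lem:unicodimone}, and it is where the hypothesis $(q+1)\nmid n$ genuinely enters --- not, as you suggest, through the question of whether $\mathrm{SU}(W)$ meets the scalars. Your hedge ``unless we get only scalars'' does not repair the step, because the group you propose to use is the wrong group: the correct group is a torus of order $q+1$ modulo the scalar condition, not $\mathrm{SU}(n-1,q)$. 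With that substitution (and the paper's converse statement that no non-scalar element acts as a scalar on $W$ when $(q+1)\mid n$ and $W^{\perp}$ is anisotropic, which explains why part (i) cannot be strengthened by this method), your outline becomes the paper's proof.
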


An analogue of the following lemma holds for certain classical algebraic groups; see \cite[\S4.1]{bgs}.

\begin{lem}
\label{lem:unicodimone}
Let $W$ be an $(n-1)$-dimensional subspace of $V$. Then $\mathrm{SU}(n,q)$ contains a non-scalar matrix that acts as a scalar on $W$ if and only if $W^\perp$ is totally singular or $(q+1) \nmid n$.
\end{lem}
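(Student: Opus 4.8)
The plan is to analyze which matrices $A \in \mathrm{SU}(n,q)$ act as a scalar $\lambda \in \mathbb{F}_{q^2}^\times$ on a fixed $(n-1)$-dimensional subspace $W$. Write $V = W \oplus \langle v \rangle$ for some $v \notin W$; since $\dim W = n-1$ we have $\dim W^\perp = 1$, say $W^\perp = \langle w \rangle$. The key dichotomy is whether $W^\perp$ is totally singular (i.e. $w$ is a singular vector, equivalently $W^\perp \subseteq W$, equivalently $W$ is degenerate) or non-degenerate (then $V = W \perp W^\perp$). I would treat these two geometric cases separately, in each case writing down the general form of a matrix acting as $\lambda I$ on $W$, imposing the unitarity condition $AA^{\sigma T} = I_n$ and $\det A = 1$, and determining exactly when a non-scalar such $A$ exists.

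**The non-degenerate case.** Here $V = W \perp \langle w \rangle$ with $w$ anisotropic. Any $A$ acting as $\lambda$ on $W$ and preserving the form must preserve $W^\perp = \langle w \rangle$, so $A = \lambda I_{n-1} \oplus (\mu)$ on this decomposition. Unitarity forces $\lambda \lambda^\sigma = 1$ and $\mu \mu^\sigma = 1$, i.e. $\lambda, \mu$ lie in the norm-one subgroup $C_{q+1}$ of $\mathbb{F}_{q^2}^\times$; the determinant condition gives $\lambda^{n-1}\mu = 1$, so $\mu = \lambda^{-(n-1)}$. Such an $A$ is non-scalar precisely when $\mu \neq \lambda$, i.e. $\lambda^n \neq 1$. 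Since $\lambda$ ranges over a cyclic group of order $q+1$, there exists $\lambda \in C_{q+1}$ with $\lambda^n \neq 1$ if and only if $(q+1) \nmid n$ — but I should double-check the boundary: if $(q+1) \mid n$ then every $\lambda \in C_{q+1}$ satisfies $\lambda^n = 1$ and only scalars arise, whereas if $(q+1)\nmid n$ a suitable $\lambda$ exists. This matches the stated "if and only if" since in the totally singular case we will show a non-scalar always exists.

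**The totally singular (degenerate) case.** Here $w$ is singular and $W^\perp = \langle w\rangle \subseteq W$. I would pick a hyperbolic pair: a singular vector $v'$ with $\langle v', w\rangle$ a hyperbolic line, and write $V = \langle v'\rangle \perp U \perp \langle w \rangle$ is \emph{not} available since $W$ is degenerate; instead use $V = U \perp (\langle w\rangle \oplus \langle v'\rangle)$ where $U = \langle w, v'\rangle^\perp$ is non-degenerate of dimension $n-2$ and $W = U \perp \langle w\rangle$ (note $W$ really is $(n-1)$-dimensional and contains its own radical $\langle w\rangle$). A matrix $A$ acting as $\lambda$ on $W$ and preserving the form: it fixes $\langle w\rangle = W^\perp$ setwise (as $\lambda w$), acts as $\lambda$ on $U$, and sends $v' \mapsto \mu v' + (\text{stuff in } W)$. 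Imposing unitarity: pairing $Av'$ with $Aw = \lambda w$ recovers $\langle v',w\rangle$, giving $\mu \lambda^\sigma = 1$, so $\mu = (\lambda^\sigma)^{-1} = \lambda^{-q}$; pairing $Av'$ with itself and with vectors of $U$ constrains the transvection-like part. The determinant is $\lambda^{n-1}\mu = \lambda^{n-1-q}$; the $\mathrm{SU}$ condition $\det = 1$ forces $\lambda^{n-1-q} = 1$. Crucially, regardless of whether such $\lambda$ exists, one can take $\lambda = 1$: then $\mu = 1$ and $A$ is a unipotent element — a transvection with centre in $\langle w\rangle$ and axis $W$ — which is non-scalar and lies in $\mathrm{SU}(n,q)$. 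So in the totally singular case a non-scalar matrix acting as a scalar (in fact as the identity) on $W$ always exists. I expect I should verify such a transvection genuinely lies in $\mathrm{SU}(n,q)$ — standard, e.g. $x \mapsto x + \langle x, w\rangle \alpha w$ for suitable $\alpha$ with $\alpha + \alpha^\sigma = 0$.

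**Main obstacle and conclusion.** Combining the two cases: a non-scalar $A \in \mathrm{SU}(n,q)$ acting as a scalar on $W$ exists if $W^\perp$ is totally singular (always) or, when $W^\perp$ is non-degenerate, exactly when $(q+1) \nmid n$. This is precisely the claimed statement. The main technical point to get right is the non-degenerate case count — confirming that the scalars realizable on $W$ by $\mathrm{SU}(n,q)$-elements form the full norm-one group $C_{q+1}$ (not a proper subgroup), so that the obstruction is genuinely $(q+1) \mid n$ and nothing finer; and conversely that when $W^\perp$ is non-degenerate there are no "off-diagonal" contributions, i.e. any form-preserving $A$ that is scalar on $W$ must also be scalar (equal to $\mu$) on $W^\perp$ since $W^\perp$ is the unique complement preserved by the form. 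The degenerate case is easy once one produces the transvection. I would also make sure the relation between "$W^\perp$ totally singular", "$W$ degenerate", and "$W \cap W^\perp \neq 0$" is stated cleanly at the outset, since these equivalences (for $\dim W = n-1$, so $\dim W^\perp = 1$) drive the whole dichotomy.
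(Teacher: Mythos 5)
Your proposal is correct and takes essentially the same route as the paper: the same dichotomy according to whether $W^\perp$ is non-degenerate or totally singular, the same reduction in the non-degenerate case to matrices $\mathrm{diag}(\lambda,\ldots,\lambda,\lambda^{-(n-1)})$ with $\lambda^{q+1}=1$ (so that a non-scalar example exists precisely when $(q+1) \nmid n$), and a unitary transvection with axis $W$ and singular centre $W^\perp$ in the degenerate case. The only cosmetic difference is that the paper writes this transvection out as an explicit $2\times 2$ block direct-summed with $I_{n-2}$ rather than via the formula $x \mapsto x + \alpha\langle x,w\rangle w$ with $\alpha + \alpha^{\sigma} = 0$.
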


\begin{proof}
First, $\mathrm{SU}(n,q)$ has two orbits on the set of one-dimensional subspaces of $V$: the non-degenerate subspaces, and the totally singular subspaces (cf.~Witt's Lemma). Thus it suffices to consider two choices for $W$, one for each possible orbit containing $W^\perp$. Let $\{e_1,\ldots,e_n\}$ be the basis for $V$ corresponding to the Gram matrix $I_n$, and let $\omega$ be a primitive element of $\mathbb{F}_{q^2}$.

\medskip

\noindent \textbf{Case (a)}: $W^\perp$ is non-degenerate. We may assume that $W^\perp = \langle e_1 \rangle$, so that $W = \langle e_2, e_3, \ldots, e_n \rangle$. Any matrix in $\mathrm{SL}(n,q^2)$ that acts as a scalar on $W$ also stabilises $W^\perp$, and hence is equal to $B_\alpha:= \mathrm{diag}(\alpha^{-(n-1)},\alpha,\ldots,\alpha)$ for some $\alpha \in \mathbb{F}_{q^2}^\times$. Moreover, $B_\alpha \in \mathrm{SU}(n,q)$ if and only if $\alpha^{q+1} = 1$.

Let $\lambda:=\omega^{q-1}$. Observe that if $(q+1) \nmid n$, then $B_\lambda$ is a non-scalar matrix in $\mathrm{SU}(n,q)$ that acts as $\lambda$ on $W$. If instead $q+1 \mid n$, then any scalar $\alpha \in \mathbb{F}_{q^2}^\times$ satisfying $\alpha^{q+1} = 1$ also satisfies $\alpha^{-(n-1)} = \alpha$. Therefore, in this case, no non-scalar matrix in $\mathrm{SU}(n,q)$ acts as a scalar on $W$.

\medskip

\noindent \textbf{Case (b)}: $W^\perp$ is totally singular. If $q$ is even, then let $\gamma:= \omega^{q+1}$ and $\delta:=1$, and otherwise, let $\gamma:=\omega$ and $\delta:=\omega^{(q-1)/2}$. Then $\delta^{q+1} = -1$ for all $q$, and so $\delta e_1+e_2$ is singular. Hence we may assume that $W^\perp = \langle \delta e_1+e_2 \rangle$, so that $W = \langle \delta e_1+e_2, e_3,\ldots,e_n \rangle$. The direct sum of $\left( \begin{matrix}
1+\gamma\delta&\gamma\\
-\gamma\delta^2&1-\gamma\delta\\
\end{matrix} \right)$ and $I_{n-2}$ is a non-scalar matrix in $\mathrm{SU}(n,q)$ that acts trivially on $W$.
\end{proof}

\begin{cor}
\label{cor:unicodimtwo}
Let $U$ be an $(n-2)$-dimensional subspace of $V$. Then $\mathrm{SU}(n,q)$ contains a non-scalar matrix that acts as a scalar on $U$.
\end{cor}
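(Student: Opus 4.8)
The plan is to deduce Corollary~\ref{cor:unicodimtwo} from Lemma~\ref{lem:unicodimone} by passing to a suitable codimension-one subspace containing $U$. First I would choose an $(n-1)$-dimensional subspace $W$ with $U \le W \le V$; there are several such $W$ (they correspond to the lines in the quotient $V/U$), so I have some freedom in the choice. A non-scalar matrix of $\mathrm{SU}(n,q)$ acting as a scalar on $W$ certainly acts as a scalar on the smaller subspace $U$, so it suffices to find \emph{one} choice of $W \supseteq U$ to which Lemma~\ref{lem:unicodimone} applies with an affirmative answer.

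By Lemma~\ref{lem:unicodimone}, the answer is affirmative for a given $W$ precisely when $W^\perp$ is totally singular or $(q+1) \nmid n$. So the only case requiring work is $(q+1) \mid n$: here I need to exhibit some $(n-1)$-dimensional $W$ with $U \le W$ and $W^\perp$ totally singular, i.e. (since $W^\perp$ is one-dimensional) with $W^\perp$ a singular line contained in $U^\perp$. Now $U^\perp$ is a two-dimensional subspace of $V$, and any non-degenerate or degenerate two-dimensional unitary space over $\mathbb{F}_{q^2}$ contains singular vectors: a non-degenerate plane is a hyperbolic plane and contains singular lines, while a degenerate two-dimensional space has a non-trivial radical which is itself totally singular. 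Either way, $U^\perp$ contains a singular one-dimensional subspace $Z$; setting $W := Z^\perp$ gives an $(n-1)$-dimensional subspace with $W^\perp = Z$ totally singular and $U = U^{\perp\perp} \le Z^\perp = W$. Applying Lemma~\ref{lem:unicodimone} (Case (b)) to this $W$ then produces the desired non-scalar matrix.

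I expect the only point needing care is the elementary linear-algebra claim that every two-dimensional subspace of $V$ (with the restricted form, possibly degenerate) contains a singular vector; this is standard for Hermitian forms over $\mathbb{F}_{q^2}$ and can be checked directly, exactly as in the construction of the singular vector $\delta e_1 + e_2$ in the proof of Lemma~\ref{lem:unicodimone}. Everything else is a short perp-calculation using $U = U^{\perp\perp}$ and $W^\perp = Z^{\perp\perp} = Z$. Thus the corollary follows with essentially no new computation beyond invoking Lemma~\ref{lem:unicodimone}.
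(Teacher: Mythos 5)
Your proposal is correct and takes essentially the same route as the paper: the paper likewise picks a totally singular one-dimensional subspace $X \le U^\perp$ and applies Lemma~\ref{lem:unicodimone} to $W = X^\perp \supseteq U$. The only difference is presentational -- the paper does not bother with the preliminary case distinction on $(q+1) \mid n$, since the totally singular choice of $W^\perp$ works unconditionally, and it takes the existence of a singular line in the two-dimensional space $U^\perp$ as standard.
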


\begin{proof}
Observe that $U^\perp$ contains a one-dimensional totally singular subspace $X$. By Lemma~\ref{lem:unicodimone}, some non-scalar matrix in $\mathrm{SU}(n,q)$ acts as a scalar on $X^\perp$, and hence on $U \subseteq X^\perp$.
\end{proof}

\begin{proof}[Proof of Theorem~\ref{thm:unisubspaces}]
If $|\Delta| < \lceil n/k \rceil$, then the subspace $\langle \Delta \rangle$ of $V$ lies in an $(n-1)$-dimensional subspace $W$. Lemma~\ref{lem:unicodimone} shows that if $(q+1) \nmid n$, then $\mathrm{SU}(n,q)$ contains a non-scalar matrix that acts as a scalar on $W$, and hence on $\langle \Delta \rangle$. Therefore, $G_{(\Delta)} \ne 1$.

Finally, assume that $|\Delta| < \lceil n/k \rceil-1$, or that $|\Delta| = \lceil n/k \rceil - 1$ and $k \nmid (n-1)$. Then $\langle \Delta \rangle$ lies in an ${(n-2)}$-dimensional subspace $U$ of $V$. By Corollary~\ref{cor:unicodimtwo}, $\mathrm{SU}(n,q)$ contains a non-scalar matrix that acts as a scalar on $U$, and hence on $\langle \Delta \rangle$. We again conclude that ${G_{(\Delta)} \ne 1}$.
\end{proof}

\section{Linear and unitary groups and the non-generating graph}
\label{sec:nclinearuni}

In this section, we complete the proof of Theorem~\ref{thm:ncsimple} by showing that it holds for linear and unitary groups. In addition, we prove Theorem~\ref{thm:nongensimple}. Define $n$, $q$, $I_k$ and $E_{i,j}$ as in \S\ref{sec:matprel}, and let $H \in \{\mathrm{SL}(n,q),\mathrm{SU}(n,q)\}$, $\cntr:=Z(H)$, and $G:=H/\cntr$. We will largely work in the group $H$, appealing to the following consequence of the definition of $\nc(G)$: for $A,B \in H \setminus \cntr$, the vertices $\cntr A$ and $\cntr B$ of $\nc(G)$ are adjacent if and only if $[A,B] \notin \cntr$ and $\langle A, B, \cntr \rangle < H$.

\subsection{Linear groups}
\label{subsec:nclin}

Assume that $H = \mathrm{SL}(n,q)$, with $n \ge 2$, and $q \ge 4$ if $n = 2$, so that $G$ is simple.

\begin{thm}
\label{thm:linearnc}
Let $G$ be the linear group $\mathrm{PSL}(n,q)$. Then $\diam(\nc(G)) \le 4$. Moreover, if $n = 2$, then $\diam(\nc(G)) = 2$ if $q$ is even or $q \le 9$, and  $\diam(\nc(G)) = 3$ otherwise.
\end{thm}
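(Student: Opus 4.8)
The plan is to treat the cases $n \ge 3$ and $n = 2$ separately, working throughout in $H = \mathrm{SL}(n,q)$ with $\cntr = Z(H)$ and using the adjacency criterion recalled at the start of this section: for non-central $A,B$, one has $\cntr A \sim \cntr B$ exactly when $[A,B]\notin\cntr$ and $\langle A,B,\cntr\rangle < H$. Write $V=\mathbb{F}_q^n$ and, for a subspace $X$, let $H_X$ be its stabiliser.

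For $n \ge 3$ I would deduce $\diam(\nc(G)) \le 4$ from two facts. First, if $\cntr P,\cntr Q$ are non-central and $P,Q$ stabilise $1$-dimensional subspaces $X,Y$ of $V$, then $d(\cntr P,\cntr Q)\le 2$: when $X=Y$ this is Lemma~\ref{lem:propernc} applied to the non-abelian group $H_X$, whose centre is $\cntr$ by Lemma~\ref{lem:inter1stab}\ref{inter1stab:p4}; when $X\ne Y$, the group $H_X\cap H_Y$ is proper and non-abelian, and since $C_H(H_X\cap H_Y)=\cntr$ by Lemma~\ref{lem:inter1stab}\ref{inter1stab:p3} it is contained in neither $C_H(P)$ nor $C_H(Q)$, so it contains $C$ with $[P,C],[Q,C]\ne I$; as $P,C\in H_X$ and $Q,C\in H_Y$, Lemma~\ref{lem:inter1stab}\ref{inter1stab:p1} upgrades this to $[P,C],[Q,C]\notin\cntr$, giving the path $\cntr P\sim\cntr C\sim\cntr Q$. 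Second, every non-central $A\in H$ is adjacent to an element stabilising a $1$-space. If $\langle A\rangle$ is reducible, say $A$ stabilises a proper nonzero subspace $U$, I would write down an explicit transvection $t$ fixing $U$ (so $\langle A,t\rangle\le H_U<H$) that does not centralise $A$; a short matrix computation shows that a commutator of a transvection with any matrix is never a non-identity scalar when $n\ge 3$, so $\cntr A\sim\cntr t$, and $t$ stabilises a $1$-space. If $\langle A\rangle$ is irreducible, then $A$ lies in the proper non-abelian normaliser $N$ of a Singer cycle, which contains a power $\psi$ of the Frobenius map $v\mapsto v^q$ of $V\cong\mathbb{F}_{q^n}$ that lies in $\mathrm{SL}(n,q)$ and fixes a nonzero proper subspace pointwise; were $[A,\psi^j]$ central for all $j$, then $A^{q-1}$ and hence $A^{q^2-1}$ would be central, contradicting Lemma~\ref{lem:irredmatlowdim}, so $\cntr A\sim\cntr\psi^j$ for some $j$. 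Combining the two facts gives $d(\cntr A,\cntr B)\le 1+2+1=4$; the few small configurations not covered, notably $(n,q)=(3,2)$ where $G\cong\mathrm{PSL}(2,7)$, are checked directly.

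For $n=2$ the group $H_X\cap H_Y$ is only a split torus, so the argument above fails and the answer depends on $q$. For the small values $q\in\{4,5,7,8,9\}$ (together with the rest of the case $q$ even) I would confirm $\diam(\nc(G))=2$ by Magma; for $q=2^f\ge 16$ I would argue directly that $\diam(\nc(G))\le 2$ (equality being automatic): given $A,B\ne 1$, if $\langle A,B\rangle$ is proper and non-abelian apply Lemma~\ref{lem:propernc}; if $\langle A,B\rangle$ is abelian then $A,B$ lie in a common torus or unipotent subgroup and a single non-commuting involution in the relevant dihedral normaliser (or in a different Sylow $2$-subgroup) is the midpoint; and if $\langle A,B\rangle=G$, one builds a length-$2$ path through a suitable element of a torus or dihedral normaliser, exploiting the action on the projective line together with the fact that any two involutions generate a proper dihedral subgroup. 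For $q$ odd with $q\ge 11$, the bound $\diam(\nc(G))\le 3$ holds because every non-central element is adjacent to several involutions — those inverting a torus containing it, or lying in a Borel subgroup through it — and among involutions the commuting pairs are too sparse to prevent choosing non-commuting ones on the two sides; for the matching lower bound I would take vertices of orders $(q-1)/2$ and $(q+1)/2$, whose cyclic subgroups lie in no common proper subgroup (no maximal subgroup of $\mathrm{PSL}(2,q)$ has order divisible both by a prime dividing $(q-1)/2$ and by one dividing $(q+1)/2$ once $q\ge 11$, apart from subfield subgroups, which are checked to cause no difficulty), so the two vertices generate $G$ and, once the tori are chosen with trivially intersecting dihedral normalisers, have no common neighbour.

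The main obstacle is the second fact above for irreducible elements when $n\ge 3$, particularly for an $A$ that is also primitive of prime degree over $\mathbb{F}_q$: such an $A$ may generate $G$ together with \emph{every} transvection, so the path is genuinely forced through the Singer normaliser, and it is the requirement that the total length stay at $4$ — rather than $5$ or $6$ — that dictates the precise form of the two facts (in particular, routing through a point-stabilising element adjacent to $A$, rather than through a transvection). For $n=2$ the delicate point is instead the sharp dichotomy between diameter $2$ (when $q$ is even or small) and diameter $3$ (when $q$ is odd and $q\ge 11$): both the upper bound in the even case and the lower bound in the odd case rest on a fairly detailed understanding of the subgroup lattice of $\mathrm{PSL}(2,q)$.
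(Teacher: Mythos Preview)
For $n\ge 3$ your plan is the paper's: your ``first fact'' is Lemma~\ref{lem:stabdist} and your ``second fact'' is Lemma~\ref{lem:reduciblematnc}, with the same reducible/irreducible split and the same appeal to the Singer normaliser together with Lemma~\ref{lem:irredmatlowdim} in the irreducible case. Your rank argument that $[A,t]$ is never a non-identity scalar for a transvection $t$ is a pleasant variant of Lemma~\ref{lem:inter1stab}\ref{inter1stab:p1}, though in the reducible case you still need to justify that some transvection in $H_U$ fails to centralise $A$; the paper avoids this by conjugating $A$ into an explicit block form and exhibiting the specific transvection $I_n+E_{2,1}$.

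For $n=2$ there is a real gap in your lower bound when $q$ is odd and $q\ge 11$. You take $a$ of order $(q-1)/2$ and $b$ of order $(q+1)/2$ and arrange that their \emph{dihedral normalisers} meet trivially. But $b$ lies in a unique maximal subgroup $L_b\cong D_{q+1}$, whereas $a$ lies in at least three: two Borel subgroups and one $D_{q-1}$. Hence the $\nc(G)$-neighbours of $a$ are spread across all of these, and $N_G(\langle a\rangle)\cap N_G(\langle b\rangle)=1$ does not rule out a common neighbour living in $(\text{Borel})\cap L_b$. The paper avoids this by taking \emph{both} witnesses of order $(q+1)/2$, so that each lies in a unique maximal subgroup, and then invoking \cite{fritzsche} for two $G$-conjugates of $D_{q+1}$ with trivial intersection.

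Your $n=2$ upper bounds are also under-argued. For $q$ even, the case $\langle A,B\rangle=G$ is the whole difficulty, and ``exploiting the action on the projective line'' is not a proof; the paper gives a concrete case split, using in particular that any two $G$-conjugates of $D_{2(q+1)}$ share an involution \cite{fritzsche}. For $q$ odd, ``commuting pairs are too sparse'' does not establish $d(x,y)\le 3$; the paper's argument splits on whether $|y|$ divides $(q+1)/2$ and, in the parabolic subcase, actually \emph{counts} the involutions within distance~$2$ of $y$ to show that every involution of $G$ is reached.
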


Let $\mathcal{U}$ be the set of one-dimensional subspaces of the natural module $V:=\mathbb{F}_q^n$ for $H$. We proceed by considering paths in $\nc(H)$ containing elements that stabilise subspaces in $\mathcal{U}$.

\begin{lem}
\label{lem:stabdist}
Let $X,Y \in \mathcal{U}$, $K \in H_X \setminus \cntr$, and $L \in H_Y \setminus \cntr$. Then $d(\cntr K,\cntr L) \le 2$.
\end{lem}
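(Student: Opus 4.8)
The plan is to show that $\cntr K$ and $\cntr L$ have a common neighbour in $\nc(H)$, so that $d(\cntr K, \cntr L) \le 2$. Observe first that if $X = Y$, then $H_X$ is a proper non-abelian subgroup of $H$ (being a maximal parabolic, containing a non-central commutator by an explicit computation, e.g.\ a transvection and a suitable diagonal element), so its image in $G$ has the form $\overline{H_X}\setminus Z(\overline{H_X})$ and Lemma~\ref{lem:propernc} gives $d(\cntr K, \cntr L) \le 2$ directly. Hence we may assume $X \ne Y$. The candidate common neighbour is (the image of) a matrix $M \in H_X \cap H_Y$ that is non-central, commutes with neither $K$ nor $L$ modulo $\cntr$, and generates a proper subgroup together with each of them.

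The key steps, in order, are as follows. First I would fix the standard coordinates so that $H_X$ is the stabiliser of $\langle e_1\rangle$ and $H_Y$ the stabiliser of $\langle e_2\rangle$, so $H_X \cap H_Y$ stabilises the flag and in particular stabilises the proper subspace $\langle e_1, e_2,\ldots\rangle$ appropriately; any element of $H_X \cap H_Y$ thus generates, with $K$ (which stabilises $X$) or with $L$ (which stabilises $Y$), a subgroup stabilising a proper nonzero subspace, hence a proper subgroup of $H$ modulo $\cntr$. So the generation condition is automatic for \emph{every} choice of $M \in H_X \cap H_Y$, and the whole problem reduces to finding $M \in H_X \cap H_Y \setminus \cntr$ with $[M,K] \notin \cntr$ and $[M,L] \notin \cntr$. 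Second, I would use Lemma~\ref{lem:inter1stab}: part~\ref{inter1stab:p4} gives $C_H(H_X) = \cntr$, so since $K \notin \cntr$ there is some element of $H_X$ not centralising $K$ modulo $\cntr$ — but I need such an element inside the smaller group $H_X \cap H_Y$. Here I would argue that the centraliser in $H_X \cap H_Y$ of $K$ modulo $\cntr$ is a \emph{proper} subgroup of $H_X \cap H_Y$: if it were all of $H_X \cap H_Y$, then $K$ would be centralised modulo $\cntr$ by $H_X \cap H_Y$; combined with the fact that $K$ itself lies in $H_X$, one pushes this up (using that $H_X\cap H_Y$ together with $K$, or a few transvections, generate enough of $H_X$) to contradict part~\ref{inter1stab:p1} or~\ref{inter1stab:p4}. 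Symmetrically, the centraliser in $H_X \cap H_Y$ of $L$ modulo $\cntr$ is proper. Third, since a group is never the union of two proper subgroups, there exists $M \in H_X \cap H_Y$ lying in neither centraliser; such $M$ is non-central (as $\cntr$ lies in both centralisers) and satisfies $[M,K],[M,L] \notin \cntr$, so $(\cntr K, \cntr M, \cntr L)$ is a path and $d(\cntr K,\cntr L)\le 2$.

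The main obstacle I anticipate is the low-dimensional bookkeeping: when $n = 2$ the group $H_X \cap H_Y$ is just the diagonal torus, which is abelian, so the "union of two proper subgroups" trick needs care — one must check that the centraliser of $K$ (resp.\ $L$) in this torus is not the whole torus, i.e.\ that neither $K$ nor $L$ is diagonalised by the same basis unless it is already central, using that $H_X\cap H_Y$ is self-centralising by Lemma~\ref{lem:inter1stab}\ref{inter1stab:p3}. Similarly the case $n = 3$, $q = 2$ flagged in Lemma~\ref{lem:inter1stab} must be handled, possibly by a direct check or by noting $\mathrm{SL}(3,2)$ is small enough to inspect. A secondary subtlety is ensuring that when $K$ or $L$ happens to lie in $H_X \cap H_Y$ itself, the argument still produces a genuine path (it does, since we only need a common neighbour distinct from both and non-adjacency to a power is not an issue here). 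Once these small cases are dispatched, the generic argument via Lemma~\ref{lem:inter1stab} is routine.
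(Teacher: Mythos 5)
Your overall strategy is the paper's: reduce to finding a common neighbour $M$ of $\cntr K$ and $\cntr L$ inside $H_X \cap H_Y$, observe that the generation condition is automatic because $\langle M,K,\cntr\rangle \le H_X$ and $\langle M,L,\cntr\rangle \le H_Y$, and use Lemma~\ref{lem:inter1stab}\ref{inter1stab:p1} to upgrade $[M,K]\ne 1$ to $[M,K]\notin\cntr$. However, two of your steps are not sound as written. First, the properness of $C_{H_X\cap H_Y}(K)$ needs no ``pushing up via transvections'': by Lemma~\ref{lem:inter1stab}\ref{inter1stab:p1} the centraliser mod $\cntr$ coincides with the genuine centraliser, and if it were all of $H_X\cap H_Y$ then $K\in C_H(H_X\cap H_Y)$, which by Lemma~\ref{lem:inter1stab}\ref{inter1stab:p3} equals either $Z(H)$ or $H_X\cap H_Y$ itself --- both excluded once $K\notin\cntr$ and $K\notin H_X\cap H_Y$. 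This single observation also handles the exceptional cases $n=2$ and $(n,q)=(3,2)$ uniformly, with no separate inspection of $\mathrm{SL}(3,2)$.

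Second, and this is the genuine gap: your parenthetical dismissal of the sub-case where $K$ (or $L$) lies in $H_X\cap H_Y$ is wrong. If $n=2$, or $(n,q)=(3,2)$, then $H_X\cap H_Y$ is abelian, so when $K\in H_X\cap H_Y$ there is \emph{no} element of $H_X\cap H_Y$ failing to centralise $K$, and your plan of locating the common neighbour inside $H_X\cap H_Y$ cannot produce a path. The difficulty is not ``a common neighbour distinct from both''; it is that the required neighbour does not exist in the subgroup you are searching. The fix is what the paper does: if $K\in H_X\cap H_Y$, then $K$ and $L$ are both non-central in $H_Y$ (one checks $Z(H_Y)=\cntr$ and that the image of $H_Y$ in $G$ has trivial centre, using Lemma~\ref{lem:inter1stab}\ref{inter1stab:p1} and \ref{inter1stab:p4}), and Lemma~\ref{lem:propernc} applied to the image of $H_Y$ in $G$ gives $d(\cntr K,\cntr L)\le 2$ directly; the same applies when $X=Y$. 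With that sub-case rerouted, the remainder of your argument goes through.
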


\begin{proof} We may assume that $K \ne L$. We claim that there exists a path $(U_1,\ldots,U_j)$ in $\nc(H)$, where $j \le 3$, $U_1 = K$, $U_j = L$, and $U_2 \in H_X \cap H_Y$. Since $Z(H_X) = Z(H_Y) = \cntr$ by Lemma~\ref{lem:inter1stab}\ref{inter1stab:p4}, the claim follows from Lemma~\ref{lem:propernc} if $K$ or $L$ lies in $H_X \cap H_Y$. Otherwise, Lemma~\ref{lem:inter1stab}\ref{inter1stab:p3} implies that $C_{H_X \cap H_Y}(K)$ and $C_{H_X \cap H_Y}(L)$ are proper subgroups of $H_X \cap H_Y$. Thus there exists an $M \in H_X \cap H_Y$ centralising neither $K$ nor $L$, and we can set $U_2 = M$.

Now, $[U_i,U_{i+1}] \ne 1$ for all $i \in \{1,\ldots,j-1\}$, and so $[U_i,U_{i+1}] \notin \cntr$ by Lemma~\ref{lem:inter1stab}\ref{inter1stab:p1}. Moreover, $\langle U_i, U_{i+1}, \cntr\rangle$ lies in $H_X$ or $H_Y$. Hence $(\cntr U_1,\ldots,\cntr U_j)$ is a path in $\nc(G)$.
\end{proof}

\begin{lem}
\label{lem:reduciblematnc}
Suppose that $n > 2$ or $q \not\equiv 3 \pmod 4$, and let $A \in H$ such that $\langle A \rangle$ stabilises no subspace in $\mathcal{U}$. Then there exist $X \in \mathcal{U}$ and $K \in H_X$ such that $\cntr A \sim \cntr K$.
\end{lem}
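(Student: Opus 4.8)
The plan is to treat two cases, according to whether $\langle A \rangle$ acts irreducibly on $V$ or stabilises some proper nonzero subspace $W$ of $V$. Since $\langle A \rangle$ stabilises no subspace in $\mathcal{U}$, the subspace $W$ in the second case satisfies $2 \le \dim W \le n-1$, and so $n \ge 3$; in particular, when $n = 2$ only the irreducible case can occur.

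\textbf{Reducible case.} Here I would take $W$ to be a proper nonzero $A$-invariant subspace and work inside the (proper) parabolic subgroup $H_W$ of $H$. Let $U$ be the unipotent radical of $H_W$, so that $U$ fixes $W$ pointwise, $U \cap \cntr = 1$, and $A \in H_W$ normalises $U$ and hence satisfies $[A,U] \le U$. If every $u \in U$ had $[A,u] \in \cntr$, then $[A,U] \le U \cap \cntr = 1$, so $A$ would centralise $U$; a short computation with block matrices in a basis adapted to $W$ then shows that $A$ acts as a scalar on $W$, and so stabilises a subspace in $\mathcal{U}$ --- a contradiction. Thus some $K \in U$ has $[A,K] \notin \cntr$. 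Such a $K$ is non-scalar (it fixes $W$ pointwise and is not the identity), so $K \notin \cntr$; it fixes every $X \in \mathcal{U}$ with $X \le W$; and $\langle A, K, \cntr \rangle \le H_W < H$. Hence $\cntr A \sim \cntr K$.

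\textbf{Irreducible case.} Here $V$ is a one-dimensional module for the field $\mathbb{F}_q[A]$, so I would identify $V$ with $\mathbb{F}_{q^n}$ in such a way that $A$ acts as multiplication by some $\mu \in \mathbb{F}_{q^n}^\times$ of degree $n$ over $\mathbb{F}_q$. Let $\phi \in \mathrm{GL}(n,q)$ be the $\mathbb{F}_q$-linear Frobenius map $x \mapsto x^q$, which fixes the line $\langle 1 \rangle$ and conjugates $A$ to $A^q$, hence normalises $\langle A \rangle$. For $n \ge 3$, I would take $K := \phi^{-2}$: this lies in $\mathrm{SL}(n,q)$ since $\det(\phi^{-2})$ is a square, it is non-scalar and fixes $\langle 1 \rangle$, and $[A,K] = A^{q^2-1}$, which does not lie in $\cntr$ by Lemma~\ref{lem:irredmatlowdim} (applied with $n \ge 3$). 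Moreover $\langle A, K, \cntr \rangle$ lies in the field-extension subgroup $N_{\mathrm{SL}(n,q)}(\langle A \rangle)$, of order dividing $n(q^n-1)$ and hence proper, so $\cntr A \sim \cntr K$. For $n = 2$ the map $\phi$ is an involution, so instead --- using the remaining hypothesis $q \not\equiv 3 \pmod 4$ --- I would take $K := (c\phi)^{-1}$, where $c \in \mathbb{F}_{q^2}^\times$ is chosen with $N_{\mathbb{F}_{q^2}/\mathbb{F}_q}(c) = \det(\phi)^{-1}$, so that $K \in \mathrm{SL}(2,q)$; here $q \not\equiv 3 \pmod 4$ is precisely the condition allowing $c$ to be chosen so that $c\phi$ also fixes a line (one may take $c = 1$ when $q$ is even, and when $q \equiv 1 \pmod 4$ one needs $-1$ to be a square in $\mathbb{F}_q$). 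Such a $K$ is a non-scalar reducible element of $\mathrm{SL}(2,q)$, the commutator $[A,K] = A^{q-1}$ avoids $\cntr$ because an irreducible element of $\mathrm{SL}(2,q)$ is non-central of order dividing $q+1$, and again $\langle A, K, \cntr \rangle \le N_{\mathrm{SL}(2,q)}(\langle A \rangle) < H$.

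The reducible case is routine. The main obstacle is the irreducible case: one must produce a reducible matrix that simultaneously lies in $\mathrm{SL}(n,q)$ --- a determinant condition, responsible for the passage from $\phi$ to $\phi^{-2}$ when $n \ge 3$ --- and fails to centralise $A$ modulo $\cntr$. The latter is exactly what Lemma~\ref{lem:irredmatlowdim} supplies, and it is also the source of the hypothesis ``$n > 2$ or $q \not\equiv 3 \pmod 4$'': for $n = 2$ and $q \equiv 3 \pmod 4$ the determinant-adjusted Frobenius element is never reducible (equivalently, the reflections of the relevant dihedral maximal subgroup are images of irreducible matrices), so in that case no suitable $K$ exists and the conclusion genuinely fails for some $A$.
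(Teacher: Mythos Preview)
Your argument is correct and follows the same overall strategy as the paper: split into reducible and irreducible cases, and in the irreducible case work inside the normaliser of a Singer cycle. The execution differs in a couple of places worth noting.

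In the reducible case, the paper puts $A$ into block form with a hypercompanion block and takes the explicit transvection $K = I_n + E_{2,1}$, then checks $[A,K] \notin \cntr$ by an eigenvalue argument. Your structural argument via the unipotent radical of $H_W$ is cleaner and avoids coordinates: if $A$ centralised $U$ then the block condition $PM = MR$ for all $M$ forces both diagonal blocks of $A$ to be the same scalar, contradicting the hypothesis on $\mathcal{U}$.

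In the irreducible case the paper uses the element $B$ similar to $C(x^n-1)$, with $[A,B] = A^{q-1}$, and then splits into three subcases to force $\det K = 1$ (taking $K = B$, $K = \alpha B$, or $K = B^2$). Your uniform choice $K = \phi^{-2}$ for $n \ge 3$ collapses two of those subcases and gives $[A,K] = A^{q^2-1}$, to which Lemma~\ref{lem:irredmatlowdim} applies directly. One point to tighten: ``$\det(\phi^{-2})$ is a square'' is not by itself a reason for $\det(\phi^{-2}) = 1$; what you need is that $\phi$ is similar to $C(x^n-1)$ (its minimal polynomial is $x^n-1$ by linear independence of the $q$-power maps), so $\det\phi = (-1)^{n+1}$ and hence $\det(\phi^{-2}) = 1$. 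For $n = 2$ your determinant-corrected Frobenius $c\phi$ satisfies $(c\phi)^2 = -I$, so it is diagonalisable over $\mathbb{F}_q$ precisely when $-1$ is a square there; this is exactly the paper's choice $K = \alpha B$ with $\alpha^2 = -1$, phrased field-theoretically.
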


\begin{proof}
We divide the proof into two cases.

\medskip

\noindent \textbf{Case (a)}: $\langle A \rangle$ acts reducibly on $V$. It is clear that $n > 2$. The matrix $A$ is similar to a block matrix whose final row of blocks is  $\begin{matrix}(0&\cdots&0&R)\end{matrix}$, with $R$ a hypercompanion matrix and a proper submatrix of $A$ (see \cite[Theorem 8.10]{perlis}). Since $\mathrm{Aut}(G) \le \mathrm{Aut}(\nc(G))$, we may assume without loss of generality that $A$ is this block matrix.

Let $K:=I_n+E_{2,1}$. Then $K \in H_{X}$, where $X \in \mathcal{U}$ is spanned by $\begin{matrix}(1&0&\cdots&0)\end{matrix} \in V$. Each matrix in $\cntr \cup \{K\}$ can be viewed as a block matrix, with a final row $\begin{matrix}(0&\cdots&0&S)\end{matrix}$ of blocks, for some nonzero block $S$ with the same dimensions as $R$. Hence $\langle A, K, \cntr\rangle < H$.

It remains to prove that $[A, K] \notin \cntr$. For a contradiction, suppose otherwise, so that $(K^{-1})^AK = cI_n$ for some $c \in \mathbb{F}_q^\times$. Then $(K^{-1})^A = cK^{-1}$. As $K$ has eigenvalue $1$ with algebraic multiplicity $n$, while $cK^{-1}$ has eigenvalue $c$, we deduce that $[A,K] = 1$. However, $(AK)_{11} = 1$ and $(KA)_{11} = 0$, a contradiction.

\medskip

\noindent \textbf{Case (b)}: $\langle A \rangle$ acts irreducibly on $V$. By \cite[Proposition 3.6.1]{delauney}, $A$ lies in a Singer subgroup $S$ of $\mathrm{GL}(n,q)$, i.e., a cyclic subgroup of order $q^n-1$. Additionally, $N:=N_{\mathrm{GL}(n,q)}(S)$ is the group $S \sd \langle B \rangle$ of order $n(q^n-1)$, where $B \in \mathrm{GL}(n,q)$ is similar to the companion matrix $C(x^n-1)$ and satisfies $F^B = F^q$ for each $F \in S$ (see \cite[pp.~187--188]{huppertgruppen} and \cite[p.~497]{hestenes}). As $C(x^n-1)$ stabilises the subspace in $\mathcal{U}$ spanned by the all-ones vector, $B$ also stabilises some $X \in \mathcal{U}$. Note that $\det(B) = \det(C(x^n-1))$ is equal to $1$ if $q$ is even or $n$ is odd, and $-1$ otherwise. Additionally, $N$ is soluble, and so its subgroup $\langle A,B,\cntr\rangle$ is proper in $H$. Furthermore, $[A,B] = A^{-1}A^B = A^{q-1}$. Since $A^{q^2-1} \in \langle A^{q-1} \rangle$, Lemma~\ref{lem:irredmatlowdim} and our assumptions on $n$ and $q$ yield $[A,B] \notin \cntr$.

It again suffices to find a matrix $K \in H_X$ with $[A,K] \notin \cntr$ and $\langle A, K, \cntr \rangle < H$. If $n$ is odd or $q$ is even, then set $K=B$. If instead $n = 2$ and $q \equiv 1 \pmod 4$, then set $K = \alpha B$ for some square root $\alpha \in \mathbb{F}_q$ of $-1$. Finally, if $q$ is odd and $n$ is even and greater than $2$, then $B^2 \in H_X$. Additionally, $[A,B^2] = A^{q^2-1}$, which does not lie in $\cntr$ by Lemma~\ref{lem:irredmatlowdim}. Thus we set $K=B^2$.
\end{proof}

\begin{proof}[Proof of Theorem~\ref{thm:linearnc}]
Let $x,y \in G \setminus \{1\}$. We will bound $d(x,y)$. By Lemma~\ref{lem:stabdist}, $d(x,y) \le 2$ if $\langle x \rangle$ and $\langle y \rangle$ stabilise one-dimensional subspaces of $V$. Since $\diam(\nc(G)) \ge 2$, we may assume without loss of generality that $\langle x \rangle$ stabilises no such subspace. If $n \ge 3$, then Lemma~\ref{lem:reduciblematnc} shows that $x \sim s$ for some $s \in G_X$ with $X \in \mathcal{U}$. Similarly, $d(y,t) \le 1$ for some $t \in G_Y$ with $Y \in \mathcal{U}$. Hence $d(x,y) \le d(x,s)+d(s,t)+d(t,y) \le 4$, as required.

Suppose from now on that $n = 2$. Magma computations show that $\diam(\nc(G)) = 2$ when $q$ is odd and at most $9$, so we will assume that $q$ is even or at least $11$. Let $p$ be the prime dividing $q$, and let $r:=(2,q-1)$. In what follows, all information about elements and subgroups of $G$ is from \cite[Ch.~XII]{dickson}, except where stated otherwise. We first note that $|G| = q(q^2-1)/r$, and that each non-identity element of $G$ lies in either a unique cyclic subgroup of order $(q-1)/r$, a unique cyclic subgroup of order $(q+1)/r$, or a unique Sylow $p$-subgroup isomorphic to the elementary abelian group $E$ of order $q$. Moreover, all cyclic subgroups of $G$ of a given order $(q \pm 1)/r$ are conjugate. In addition, each element of order dividing $q$ or $(q-1)/r$ lies in a parabolic maximal subgroup of shape $E \sd \frac{q-1}{r}$, which is the stabiliser in $G$ of a subspace in $\mathcal{U}$.

It now follows that $|x|$ divides $(q+1)/r$. The unique cyclic subgroup of order $(q+1)/r$ containing $x$ lies in a unique maximal subgroup $L \cong D_{2(q+1)/r}$. We divide the remainder of the proof into two cases.

\medskip

\noindent \textbf{Case (a)}: $q$ is even. Suppose first that $|y|$ does not divide $q-1$. All involutions of $G$ are conjugate, as are all cyclic subgroups of order $q+1$, and so $y$ lies in a $G$-conjugate $M$ of $L$. As $Z(L) = 1$, Lemma~\ref{lem:propernc} yields $d(x,y) \le 2$ if $L = M$. If instead $L \ne M$, then $L \cap M$ contains an involution $a$ \cite[Lemma 2.3]{fritzsche}, and it is clear that $C_L(a) = \langle a \rangle = C_M(a)$. Hence $x \sim a$ and $d(a,y) \le 1$, and so $d(x,y) \le 2$.

Next, suppose that $|y|$ divides $q-1$. Then $y$ lies in a parabolic subgroup $P$ of $G$ of shape $E \sd (q-1)$. As $|P||L| = 2|G|$, there exists an involution $b \in P \cap L$. The centraliser of $b$ in $G$ is the Sylow $2$-subgroup of $P$, and thus $x \sim b \sim y$. Therefore, $\diam(\nc(G)) \le 2$.

\medskip

\noindent \textbf{Case (b)}: $q$ is odd. There exist $G$-conjugates $J$ and $K$ of $L$ with $J \cap K = 1$ \cite[Lemma 2.2]{fritzsche}. As above, $J$ is the unique maximal subgroup of $G$ containing an element $s$ of order $(q+1)/2$, and $K$ is the unique maximal subgroup containing a conjugate $t$ of $s$. Neither $s$ nor any neighbour of $s$ in $\nc(G)$ is a neighbour of $t$, and so $d(s,t) \ge 3$. Hence it remains to show that $d(x,y) \le 3$.

Suppose first that $|y|$ divides $(q+1)/2$. Notice that $Z(L) = 1$ if $q \equiv 1 \pmod 4$, else $|Z(L)| = 2$. Additionally, since all cyclic subgroups of $G$ of a given order $(q \pm 1)/2$ are conjugate, so are all involutions of $G$. It follows that there exist $G$-conjugates $R$ and $S$ of $L$ with $x \in R \setminus Z(R)$ and $y \in S \setminus Z(S)$. If $R = S$, then $d(x,y) \le 2$ by Lemma~\ref{lem:propernc}. Otherwise, as the dihedral group $R$ is generated by a pair of non-commuting involutions, there exists an involution $a \in R \setminus (Z(R) \cup S)$. In addition, Lemma~\ref{lem:simplenoncent}\ref{simplenoncent3} yields an involution $b \in S \setminus Z(S)$ with $[a,b] \ne 1$. Since $\langle a, b \rangle$ is dihedral, and since $C_R(a) = Z(R) \times \langle a \rangle$ and $C_S(b) = Z(S) \times \langle b \rangle$, we deduce that $(x,a,b,y)$ contains a path in $\nc(G)$ from $x$ to $y$. Hence $d(x,y) \le 3$.

Assume from now on that $|y|$ does not divide $(q+1)/2$, so that $y$ lies in a parabolic subgroup $P$ of shape $E \sd \frac{q-1}{2}$, with $P = G_X$ for some $X \in \mathcal{U}$. If $q \equiv 1 \pmod 4$, then Lemma~\ref{lem:reduciblematnc} yields $x \sim k$ for some $k \in G_Y$ with $Y \in \mathcal{U}$. Since $d(k,y) \le 2$ by Lemma~\ref{lem:stabdist}, we obtain $d(x,y) \le 3$.

Suppose therefore that $q \equiv 3 \pmod 4$, and let $c$ be an element of $P$ of order $(q-1)/2$. Then $\langle c \rangle$ is the pointwise stabiliser $G_{(X,Y_c)}$ for some $Y_c \in \mathcal{U} \setminus \{X\}$, and $N_c:=N_G(\langle c \rangle)$ is dihedral of order $q-1$ and equal to the setwise stabiliser $G_{\{X,Y_c\}}$. It is also clear that $c \sim f$ for each of the $(q-1)/2$ involutions $f \in N_c$, and in fact $y \sim f$ if $y \in \langle c \rangle$, since $|y| \ne 2$. In addition, $C_G(\langle c \rangle) = \langle c \rangle$, and so if $y \notin \langle c \rangle$, then $y \sim c \sim f$. Now, let $c'$ be an element of $P \setminus \langle c \rangle$ of order $|c|$, so that $Y_c \ne Y_{c'}$. Then $N_c \cap N_{c'} = G_{\{X,Y_c\}} \cap G_{\{X,Y_{c'}\}} = G_{(X,Y_c)} \cap G_{(X,Y_{c'})} = \langle c \rangle \cap \langle c' \rangle = 1$. Since there are $q$ cyclic subgroups of order $|c|$ in $P$, it follows that $y$ has distance at most two from at least $m:=q(q-1)/2$ involutions of $G$. This in fact accounts for all involutions of $G$, since the centraliser in $G$ of an involution is conjugate to $L$, which has order $|G|/m$. Hence if $|x| = 2$, then $d(x,y) \le 2$. Otherwise, $x$ is adjacent to each involution in $L \setminus Z(L)$, and so $d(x,y) \le 3$, completing the proof.
\end{proof}

Using Magma, we see that $\nc(G)$ has diameter $2$ when $G \in \{\mathrm{PSL}(3,4),\mathrm{PSL}(4,2)\}$, and diameter $3$ when $G \in \{\mathrm{PSL}(3,3),\mathrm{PSL}(3,5), \mathrm{PSL}(3,7), \mathrm{PSL}(4,3)\}$. Note that the diameter of $\nc(\mathrm{PSL}(3,4))$ was incorrectly reported as $3$ in \cite{saulthesis}.

\subsection{Unitary groups}
\label{subsec:unitarync}

Assume that $H = \mathrm{SU}(n,q)$, with $n \ge 3$, and $q \ge 3$ if $n = 3$, so that $G$ is simple. In fact, by Proposition~\ref{prop:ncsimpleeven} and Theorem~\ref{thm:simpleoddmax}, we may assume that $n$ is prime.

\begin{thm}
\label{thm:ncuniodd}
Let $G$ be the unitary group $\mathrm{PSU}(n,q)$, with $n$ an odd prime.
\begin{enumerate}[label={(\roman*)},font=\upshape]
\item $\nc(G)$ is connected with diameter at most $5$.
\item \label{ncuniodd2} Suppose that $n = 7$ and $q = 2$. Then $\nc(G)$ is connected with diameter $4$.
\end{enumerate}
\end{thm}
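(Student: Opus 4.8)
The theorem splits into two parts: part~(i) is the generic bound for $\mathrm{PSU}(n,q)$ with $n$ an odd prime, and part~(ii) is the sharp diameter computation for $\mathrm{PSU}(7,2)$.  For part~(i), I would mimic the strategy already used for $\mathrm{SL}(n,q)$ in \S\ref{subsec:nclin}: work inside $H=\mathrm{SU}(n,q)$ modulo $\cntr=Z(H)$, and try to route every pair of vertices through elements stabilising a common low-dimensional subspace of the natural module $V=\mathbb{F}_{q^2}^n$.  The key tool is Theorem~\ref{thm:unisubspaces}: since $n$ is an odd prime, $\lceil n/k\rceil\ge 2$ for $k<n$, so a single proper nonzero subspace $X$ has $G_{(X)}\ne 1$ whenever $(q+1)\nmid n$ or $k\nmid(n-1)$ — and for $k=1$ the second condition $1\mid(n-1)$ always holds, giving $G_{(X)}\ne 1$ for every $1$-dimensional $X$.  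So stabilisers of points (and, via Corollary~\ref{cor:unicodimtwo}, of codimension-$2$ subspaces) are genuinely large, and the parabolic subgroups they sit in are proper and non-abelian.

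**Key steps for part~(i).**  First I would establish a unitary analogue of Lemma~\ref{lem:inter1stab}: for distinct nondegenerate or singular $1$-spaces $X,Y$, show $Z(H_X)=\cntr$, that $C_H(H_X\cap H_Y)$ is small, and that no commutator in $H_X$ is a non-central scalar (the last via the same homomorphism-to-$\mathbb{F}_{q^2}^\times$ trick on the relevant diagonal entry).  This yields a unitary analogue of Lemma~\ref{lem:stabdist}: any two vertices represented by matrices fixing $1$-spaces are at distance $\le 2$ through a common element of $H_X\cap H_Y$.  Second, I would prove a unitary analogue of Lemma~\ref{lem:reduciblematnc}: every $A\in H$ is within distance $1$ in $\nc(G)$ of some element fixing a $1$-space.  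Here the reducible case uses the rational canonical form / hypercompanion structure over $\mathbb{F}_{q^2}$ together with the isometry condition, and the irreducible case uses that an irreducible $\langle A\rangle\le\mathrm{GU}(n,q)$ lies in a Singer-type torus $C_{q^n+1}$ whose normaliser is soluble of order $n(q^n+1)$ and contains a Frobenius-twist element $B$ which, being similar to a companion matrix of $x^n-\mu$, fixes a $1$-space; one checks $[A,B]=A^{q+1}\notin\cntr$ using Lemma~\ref{lem:irredmatlowdim} (applied after embedding suitably) and the hypothesis that $n$ is an odd prime, so $n\notin\{1,2\}$ forces $A$ reducible over $\mathbb{F}_{q^2}$ unless $n$ is small — i.e. for $n$ an odd prime $\ge 3$ the truly irreducible-over-$\mathbb{F}_{q^2}$ case only arises for $n=3$ and needs a direct argument.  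Combining: $d(x,y)\le 1+2+1+\cdots$; being slightly careful about whether the central endpoints need an extra hop, one arrives at $\diam(\nc(G))\le 5$, with a drop to $4$ when $L$ contains an involution within distance $1$ of $x$, exactly as in Lemma~\ref{lem:ncmaxsimple}.

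**Part~(ii), $\mathrm{PSU}(7,2)$.**  For the upper bound, note $G=\mathrm{PSU}(7,2)$ has a maximal subgroup of odd order (Theorem~\ref{thm:simpleoddmax}(iii), with $(7,2)\notin\{(3,3),(3,5),(5,2)\}$), so Proposition~\ref{prop:ncsimpleeven} does not apply directly and the bound $5$ from part~(i) must be improved to $4$.  I would identify the odd-order maximal subgroup — a Singer-type normaliser $C_{43}\sd C_7$ of order $301$ — and show that every element $x$ of order $43$ is adjacent in $\nc(G)$ to some element lying in a maximal subgroup of even order (e.g. a parabolic): since $43$ is a primitive prime divisor, $C_G(x)=\langle x\rangle$, so $x$ is adjacent to anything in its Singer normaliser of order $7$, and those elements of order $7$ lie in larger even-order subgroups; then chase the resulting path through Lemma~\ref{lem:ncmaxsimple} and the codimension-$1$/codimension-$2$ machinery of \S\ref{subsec:unitaryspaces}, pushing the bound down to $4$.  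For the matching lower bound, invoke Lemma~\ref{lem:intncgraphs}: by \cite[Theorem 1.1]{intgraph} (as used for $\mathbb B$ in the proof of Theorem~\ref{thm:spordiamnc}) the intersection graph $\Delta_G$ of $\mathrm{PSU}(7,2)$ has diameter $5$, whence $\diam(\nc(G))\ge 4$; alternatively, exhibit explicitly a pair of Singer elements of coprime orders $43$ and $129$ (or $43$ and $63$) forcing distance $\ge 4$, as was done for $\mathbb B$ in Case~(a) of Lemma~\ref{lem:babyneighb}.

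**Main obstacle.**  The hard part will be the unitary analogue of Lemma~\ref{lem:reduciblematnc} — specifically controlling the irreducible case over $\mathbb{F}_{q^2}$ and the interaction of the isometry condition $AA^{\sigma T}=I_n$ with the companion/hypercompanion normal forms, since unlike $\mathrm{SL}$ the relevant Singer torus has order $q^n+1$ rather than $q^n-1$, the element $B$ need not lie in $\mathrm{SU}$ on the nose (one may have to scale it, which is only possible when a suitable $(q+1)$-st root exists), and Lemma~\ref{lem:irredmatlowdim} is phrased for $\mathrm{SL}$ so must be invoked via an embedding or re-proved in the unitary setting.  Getting the commutator $[A,B]=A^{q+1}$ to be genuinely non-central — i.e. ruling out $A^{q+1}\in\cntr$ — is where the hypothesis "$n$ an odd prime" does the real work, and I expect the bookkeeping there (including the low-rank exceptional case $n=3$) to be the most delicate part of the argument.
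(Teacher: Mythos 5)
Your overall plan for part~(i) — replicate the $\mathrm{SL}(n,q)$ strategy by proving unitary analogues of Lemmas~\ref{lem:stabdist} and~\ref{lem:reduciblematnc} — is not the route the paper takes, and as stated it has genuine gaps. First, the reducible case of your proposed analogue of Lemma~\ref{lem:reduciblematnc} does not transfer: the linear argument conjugates $A$ into rational canonical form and pairs it with $K = I_n + E_{2,1}$, but neither the conjugating matrix nor $K$ satisfies the isometry condition $KK^{\sigma T}=I_n$, so this step simply is not available inside $\mathrm{SU}(n,q)$. Second, if both of your analogues held you would obtain $d(x,y)\le 1+2+1=4$ for \emph{every} pair, i.e.\ $\diam(\nc(G))\le 4$ for all odd prime $n$ — a stronger statement than the theorem — and your hedged "$\le 5$" does not actually follow from the steps you describe; this is a sign that the analogue of Lemma~\ref{lem:stabdist} for \emph{arbitrary} elements fixing $1$-spaces is not obtainable this way. (The paper only proves a distance bound between the very special images of conjugates of $C(x^n-1)$, in Lemma~\ref{lem:uniphid}\ref{uniphid3}, exploiting $C_{L_0}(R)=\cntr$ and Theorem~\ref{thm:unisubspaces} applied to \emph{four} $1$-spaces, with a Magma computation for $(n,q)=(5,4)$ and an ad hoc argument for $n=3$.) Third, your commutator formula $[A,B]=A^{q+1}$ is wrong: the Singer subgroup $S$ sits inside a Singer subgroup of $\mathrm{GL}(n,q^2)$ and the normalising element acts as the $q^2$-power Frobenius, so $[A,C]=A^{q^2-1}$, which is shown to be non-central by applying Lemma~\ref{lem:irredmatlowdim} over $\mathbb{F}_{q^2}$. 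The paper's actual route for (i) avoids all of this: any vertex that is non-central in some \emph{even-order} maximal subgroup is already handled by Lemma~\ref{lem:ncmaxsimple}, so only elements confined to the odd-order Singer normalisers $S\rtimes\langle C\rangle$ need special treatment, and each such element is shown to be adjacent to $\cntr C$.

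For part~(ii), your lower bound via the intersection graph and Lemma~\ref{lem:intncgraphs} is exactly the paper's. But your upper bound has a gap in the case $|x|=43$, $|y|\ne 43$: after producing a neighbour $t$ of $x$ lying in an even-order maximal subgroup, "chasing through Lemma~\ref{lem:ncmaxsimple}" only gives $d(t,y)\le 4$, hence $d(x,y)\le 5$. To reach $4$ you must prove $d(t,y)\le 3$, which the paper does by a computational case analysis: for every $y$ one finds maximal subgroups $K\ni y$ and $L\ni t$ with either $|K\cap L|>|C_G(t)|+|Z(K)|$, or $Z(K)=1$, $C_L(t)=\langle t\rangle$, $\langle t\rangle\cap K=1$ and $|K|\,|L|>|G|$; nothing in your sketch supplies this step. (Minor slips: the Singer torus of $\mathrm{SU}(7,2)$ has order $(2^7+1)/3=43$, not $129$, so there are no Singer elements of order $129$ or $63$ to use for an explicit distance-$\ge 4$ pair.)
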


Let $V$ be the natural module $\mathbb{F}_{q^2}^n$ for $H = \mathrm{SU}(n,q)$, and fix a basis $\{e_1,\ldots,e_n\}$ for $V$ so that the unitary form on $V$ has Gram matrix $I_n$.

\begin{lem}
\label{lem:unithreemat}
Suppose that $n = 3$, let $\omega$ be a primitive element of $\mathbb{F}_{q^2}$, and let $\lambda:=\omega^{q-1}$. Let $B_1:=\mathrm{diag}(\lambda^{-2},\lambda,\lambda)$, $B_2:=\mathrm{diag}(\lambda,\lambda^{-2},\lambda)$, and $B_3:=\mathrm{diag}(\lambda,\lambda,\lambda^{-2})$. Additionally, let $A_1$ and $A_2$ be $H$-conjugates of $B_1$.
\begin{enumerate}[label={(\roman*)},font=\upshape]
\item \label{unithreemat1} $B_i \in H \setminus \cntr$ for each $i$.
\item \label{unithreemat2} $B_2$ and $B_3$ are each conjugate to $B_1$ in $H$.
\item \label{unithreemat3} $\langle A_1, A_2 \rangle$ stabilises a one-dimensional subspace of $V$.
\item \label{unithreemat4} If $[B_i,A_1] = 1$ for all $i$, then $A_1 \in \{B_1,B_2,B_3\}$.
\end{enumerate}
\end{lem}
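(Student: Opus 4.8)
The plan is to verify each part directly from the definition of $\mathrm{SU}(3,q)$ with Gram matrix $I_3$, exploiting that all four matrices $B_1,B_2,B_3,A_1,A_2$ are conjugate to diagonal matrices of a very restricted shape.

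For \ref{unithreemat1}, note that each $B_i$ is diagonal with entries a permutation of $(\lambda^{-2},\lambda,\lambda)$, where $\lambda = \omega^{q-1}$ has order $q+1$ in $\mathbb{F}_{q^2}^\times$. First I would check $\det B_i = \lambda^{-2}\cdot\lambda^2 = 1$, so $B_i \in \mathrm{SL}(3,q^2)$, and then $B_i B_i^{\sigma T} = \mathrm{diag}(\lambda^{-2}\lambda^{-2q},\lambda^{1+q},\lambda^{1+q})$; since $\lambda^{q+1} = \omega^{(q-1)(q+1)} = \omega^{q^2-1} = 1$, every entry is $1$, so $B_i \in H$. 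To see $B_i \notin \cntr = Z(H)$, recall that $\cntr$ consists of the scalar matrices $\mu I_3$ with $\mu^{q+1} = 1$ and $\mu^3 = 1$; since $B_i$ is non-scalar (as $\lambda^{-2} \neq \lambda$, which holds because $\lambda^3 = 1$ would force $3 \mid q+1$ and even then $\lambda^{-2} = \lambda$ only if $\lambda^3 = 1$... here I must be slightly careful: $\lambda^{-2} = \lambda$ iff $\lambda^3 = 1$, and $\lambda$ has order exactly $q+1$, so this happens iff $q+1 \mid 3$, i.e. $q = 2$; but then $\lambda$ has order $3$ and $B_i = \mathrm{diag}(\lambda,\lambda,\lambda)$ would be scalar — so in fact for $q = 2$ one should instead take a different generator of the relevant cyclic group, or observe $B_1 = \mathrm{diag}(\lambda^{-2},\lambda,\lambda) = \mathrm{diag}(\lambda,\lambda,\lambda)$ is scalar and the lemma as stated needs $q \geq 3$; I would check the hypotheses of the subsection, which indeed require $q \geq 3$ when $n = 3$). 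For $q \geq 3$, $\lambda^3 \neq 1$, so $\lambda^{-2} \neq \lambda$ and each $B_i$ is genuinely non-scalar, hence $B_i \notin \cntr$.

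Part \ref{unithreemat2} is immediate: the permutation matrices realizing the transpositions $(1\,2)$ and $(1\,3)$ lie in $\mathrm{GU}(3,q)$ (they preserve $I_3$) and have determinant $-1$; multiplying by a suitable scalar $\mu I_3$ with $\mu^{q+1} = 1$, $\mu^3 = -1$ — which exists since $\gcd(3, q+1)$ divides... here again I must produce such $\mu$: I need $\mu^6 = 1$ with $\mu^3 = -1$, and $\mu^{q+1} = 1$; taking $\mu$ a primitive $6$th root of unity works provided $6 \mid q^2 - 1$, which holds for all $q$ not divisible by $2$ or $3$, and for $q$ even $-1 = 1$ so the permutation matrix already has determinant $1$, while for $q$ divisible by $3$... — alternatively, and more cleanly, I would just conjugate by a monomial matrix that permutes $e_1,e_2,e_3$ and rescales to fix the determinant, which always exists in $\mathrm{SU}(3,q)$. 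Conjugating $B_1$ by the permutation $(1\,2)$ gives $B_2$ and by $(1\,3)$ gives $B_3$.

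For \ref{unithreemat3}: each $A_j$ is $H$-conjugate to $B_1 = \mathrm{diag}(\lambda^{-2},\lambda,\lambda)$, so $A_j$ acts as the scalar $\lambda$ on a non-degenerate $2$-space $W_j$ (the image of $\langle e_2,e_3\rangle$) and as $\lambda^{-2}$ on the non-degenerate line $W_j^\perp$. I would apply Corollary~\ref{cor:unicodimtwo} in the form already used in the proof of Theorem~\ref{thm:unisubspaces}: the $1$-dimensional subspace $\langle W_1^\perp, W_2^\perp\rangle$... — wait, that is $2$-dimensional in general; instead I observe $W_1 \cap W_2$ is at least $(2+2-3) = 1$-dimensional, and any vector in $W_1 \cap W_2$ is an eigenvector of both $A_1$ (eigenvalue $\lambda$) and $A_2$ (eigenvalue $\lambda$), hence spans a $1$-dimensional subspace stabilised by $\langle A_1, A_2\rangle$. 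The main obstacle here is only the bookkeeping: one must rule out the degenerate possibility that forces a contradiction, but since $\dim W_1 + \dim W_2 = 4 > 3 = \dim V$ the intersection is automatically nonzero, so there is no obstacle.

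For \ref{unithreemat4}: suppose $A_1$ commutes with each of $B_1, B_2, B_3$. The three matrices $B_1, B_2, B_3$ are simultaneously diagonal with distinct joint eigenspaces $\langle e_1\rangle, \langle e_2\rangle, \langle e_3\rangle$ (the eigenvalue patterns $(\lambda^{-2},\lambda,\lambda)$, $(\lambda,\lambda^{-2},\lambda)$, $(\lambda,\lambda,\lambda^{-2})$ separate the three coordinate lines, using $\lambda^{-2}\neq\lambda$ for $q\geq 3$). A matrix commuting with all three must preserve each $\langle e_i\rangle$, hence is diagonal, say $A_1 = \mathrm{diag}(a_1,a_2,a_3)$. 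Being $H$-conjugate to $B_1$, it has the same characteristic polynomial, so $\{a_1,a_2,a_3\} = \{\lambda^{-2},\lambda,\lambda\}$ as a multiset; thus $A_1$ is one of the three diagonal matrices obtained by placing $\lambda^{-2}$ in one of the three positions, i.e. $A_1 \in \{B_1, B_2, B_3\}$. I expect \ref{unithreemat4} to be the cleanest part and \ref{unithreemat2} to require the most care, since producing the conjugating element of $\mathrm{SU}(3,q)$ (rather than $\mathrm{GU}(3,q)$) of the right determinant is the one place where the arithmetic of $\gcd(3,q+1)$ genuinely intervenes.
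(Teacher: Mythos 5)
Your proposal is correct and follows essentially the same route as the paper: direct verification for (i); conjugation by signed permutation (monomial) matrices for (ii), which the paper simply writes out explicitly (e.g.\ the $(1\,2)$-permutation matrix with its $(3,3)$ entry negated to restore determinant $1$); intersection of the two $2$-dimensional $\lambda$-eigenspaces for (iii); and simultaneous diagonalisation plus comparison of characteristic polynomials for (iv). Your observation that the standing hypothesis $q\ge 3$ is what guarantees $\lambda^{-2}\ne\lambda$ is exactly the paper's remark that $\omega^{3(q-1)}\ne 1$.
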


\begin{proof}
Since $\lambda^q = \lambda^{-1}$ and $\det(B_i) = 1$, we see that $B_i \in H$. Additionally, $\omega^{3(q-1)} \ne 1$, and so $B_i \notin \cntr$, hence (i).  For (ii), note that the matrices
$\left( \begin{matrix}
0&1&0\\
1&0&0\\
0&0&-1\\
\end{matrix} \right)$ and $\left( \begin{matrix}
0&0&1\\
0&-1&0\\
1&0&0\\
\end{matrix} \right)$ 
lie in $H$ and conjugate $B_1$ to $B_2$ and $B_3$, respectively.

To prove (iii), we may assume that $A_1 = B_1$. Let $F \in H$ such that $A_2 = B_1^F$, and note that $B_1$ and $A_2$ act as $\lambda$ on the subspaces $E:=\langle e_2,e_3 \rangle$ and $E^F$ of $V$, respectively. Hence $\langle B_1, A_2 \rangle$ stabilises each subspace of $E \cap E^F$, which has positive dimension since $\dim(V) = 3$.

Finally, $C_H(\{B_1,B_2,B_3\})$ consists of diagonal matrices, and all $H$-conjugates of $B_1$ have the same characteristic polynomial as $B_1$. Since $B_1$, $B_2$ and $B_3$ are the only diagonal matrices in $\mathrm{GL}(n,q^2)$ with this characteristic polynomial, we obtain (iv).
\end{proof}

\begin{lem}
\label{lem:uniphid}
Suppose that $n$ is an odd prime.
\begin{enumerate}[label={(\roman*)},font=\upshape]
\item \label{uniphid1} The companion matrix $C(x^n-1)$ is an element of $H$.
\item \label{uniphid2} Let $R$ be an element of $H$ that is conjugate to $C(x^n-1)$. Then $\cntr R$ is adjacent in $\nc(G)$ to an involution of $G$.
\item \label{uniphid3} Let $R$ and $S$ be $H$-conjugates of the companion matrix $C(x^n-1)$. Then $d(\cntr R,\cntr S)$ is at most $3$ if $n = 3$, or at most $2$ otherwise.
\end{enumerate}
\end{lem}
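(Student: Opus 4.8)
The plan is to treat parts (i) and (ii) by explicit computation with monomial matrices, and part (iii) by reduction to intersections of subspace stabilisers, with $n\ge 5$ handled by a non-abelianness argument and $n=3$ by an eigenline argument.

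\medskip

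\emph{Parts (i) and (ii).} Put $P:=C(x^n-1)$; by Definition~\ref{def:compmat} this is the permutation matrix cyclically permuting the orthonormal basis $e_1,\dots,e_n$ (sending $e_1\mapsto e_n$ and $e_j\mapsto e_{j-1}$ for $j\ge 2$). Hence $\det P$ is the sign $(-1)^{n-1}=1$ of an $n$-cycle, and $P P^{\sigma T}=PP^T=I_n$ since $P$ has $0/1$ entries; so $P\in\mathrm{SU}(n,q)=H$, which is (i). For (ii) I may assume $R=P$, as conjugation by an element of $H$ is an automorphism of $\nc(G)$ preserving involutions. Let $N$ be the subgroup of monomial matrices of $H$ relative to $e_1,\dots,e_n$; it is a proper subgroup of $H$ containing $\cntr$ and $P$. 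Take $t\in N$ to interchange $e_1\leftrightarrow e_2$, and in addition act as $-1$ on $e_3$ when $n=3$, or interchange $e_3\leftrightarrow e_4$ and fix $e_5,\dots,e_n$ when $n\ge 5$. Each $2\times 2$ interchange block $B=\left(\begin{smallmatrix}0&1\\1&0\end{smallmatrix}\right)$ satisfies $BB^{\sigma T}=I_2$, and $(-1)^{q+1}=1$, so $t\in H$; also $t^2=I_n$ and $t$ is not scalar, so $\cntr t$ is an involution of $G$. Finally the underlying permutation of the monomial matrix $[P,t]$ is the $S_n$-commutator of an $n$-cycle with a product of one or two transpositions, which is nontrivial because the centraliser of an $n$-cycle in $S_n$ is cyclic of order $n$ and, $n$ being an odd prime, contains no involution. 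Thus $[P,t]$ is not scalar, so $[P,t]\notin\cntr$; as $\langle P,t,\cntr\rangle\le N<H$, this gives $\cntr P\sim\cntr t$.

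\medskip

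\emph{Part (iii).} Assume again $R=P$, so $R$ fixes the line $X:=\langle e_1+\cdots+e_n\rangle$; let $Y$ be the line fixed by $S$ (an $H$-conjugate of $X$), and write $H_X$, $H_Y$ for the corresponding subspace stabilisers in $H$, which are maximal. If $X=Y$, then $P$ and $S$ are non-central elements of $H_X$, and Lemma~\ref{lem:propernc} gives $d(\cntr P,\cntr S)\le 2$. So suppose $X\ne Y$ and put $N:=H_X\cap H_Y$; Theorem~\ref{thm:unisubspaces} — part (ii) if $n\ge 5$ (as $2<n-1$), part (i) if $n=3$ (as $q\ge 3$ forces $q+1\nmid 3$) — gives $N\supsetneq\cntr$. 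The goal is a $K\in N\setminus\cntr$ with $[P,K]\notin\cntr$ and $[S,K]\notin\cntr$: then $\langle P,K,\cntr\rangle\le H_X<H$ and $\langle S,K,\cntr\rangle\le H_Y<H$, so $(\cntr P,\cntr K,\cntr S)$ is a path and $d\le 2$. For $n\ge 5$ such $K$ always exists: the sets $\{K\in N:[P,K]\in\cntr\}$ and $\{K\in N:[S,K]\in\cntr\}$ are subgroups of $N$ (preimages in $N$ of $C_G(\cntr P)$ and $C_G(\cntr S)$), and both are proper, since $C_G(\cntr P)$ and $C_G(\cntr S)$ are abelian — centralisers of a regular semisimple element, or of a regular unipotent element when $p=n$ — whereas $N$ is non-abelian: according as the plane $X+Y$ is non-degenerate, degenerate of rank one, or totally singular, $N$ contains a copy of $\mathrm{SU}(n-2,q)$, of $\mathrm{SU}(n-3,q)$, or the unipotent radical of the stabiliser of $X+Y$, all non-abelian for $n\ge 5$. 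As $N$ is not the union of two proper subgroups, $K$ exists and $d\le 2$.

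\medskip

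For $n=3$ the group $N$ turns out to be abelian, the short path can fail, and I would argue for $d\le 3$. If some $K\in N$ has $[P,K]\notin\cntr$ and $[S,K]\notin\cntr$, then $d\le 2$ as above. Otherwise, say no $K\in N$ has $[P,K]\notin\cntr$; then every element of $N$ permutes the three eigenlines of $P$, and inspecting the structure of $N$ — a torus with eigenlines $X$, $Y$, $(X+Y)^\perp$ when $X\perp Y$, or a group acting as a scalar on the plane $W:=X+Y$ and a scalar on $W^\perp$ when $X\not\perp Y$ — forces $P$ either to stabilise $Y$, so that $P,S\in H_Y$ and $d\le 2$, or to stabilise $W$. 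In the latter case, if likewise no $K\in N$ has $[S,K]\notin\cntr$, then $S$ also stabilises $W$ (or $S\in H_X$), so $P$ and $S$ lie in the proper non-abelian subgroup $G_W$ and $d\le 2$ by Lemma~\ref{lem:propernc}; if instead some $K\in N$ has $[S,K]\notin\cntr$, then choosing such a $K$ that moreover acts non-scalarly on $X^\perp$, and then $L\in H_X$ with $[L,P]\notin\cntr$ and $[L,K]\notin\cntr$, one obtains $\cntr P\sim\cntr L\sim\cntr K\sim\cntr S$ (valid since $K\in H_X\cap H_Y$), so $d\le 3$. The exceptional choices ruled out in this last step lie in proper subsets of $N$ and of $H_X$ for all but finitely many $q$, and the remaining small $q$ are handled by the computations of \S\ref{subsec:prelims}.

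\medskip

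\emph{Main obstacle.} The hard part is the $n=3$ analysis: one must pin down the isomorphism types and centres of $N=H_X\cap H_Y$ and of the plane-stabiliser $G_W$, and eliminate the configurations in which every admissible intermediate vertex is forced to lie in the centre of one of these subgroups (and hence to commute with everything relevant modulo $\cntr$). By contrast, (i) and (ii) are short explicit computations, and the $n\ge 5$ case of (iii) reduces cleanly to the non-abelianness of $H_X\cap H_Y$ against the abelianness of the centraliser of a regular element.
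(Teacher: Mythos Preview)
Your treatments of (i) and (ii) are correct and essentially the paper's argument, with a harmlessly different choice of monomial involution. The problem is (iii).

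For $n\ge 5$, the assertion that $C_G(\cntr P)$ is abelian is false whenever $n\mid q+1$. In that case $\mathbb{F}_{q^2}$ contains a primitive $n$-th root of unity $\zeta$ with $\zeta^{q+1}=1$, and $D:=\mathrm{diag}(1,\zeta,\dots,\zeta^{n-1})\in H$ satisfies $D^{-1}PD=\zeta P$, so $\cntr D\in C_G(\cntr P)$. For a circulant $K=\sum_i a_iP^i\in C_H(P)$ one has $K^D=\sum_i a_i\zeta^iP^i$, whence $[D,K]\in\cntr$ forces $K\in\mathbb{F}_{q^2}^{\times}\langle P\rangle\cap H=\cntr\langle P\rangle$. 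But when $n\mid q+1$ the eigenbasis of $P$ is orthonormal for the Hermitian form, so $|C_H(P)|=(q+1)^{n-1}$, while $|\cntr\langle P\rangle|=n^2$; for instance $(n,q)=(5,4)$ gives $625$ versus $25$ (the paper records the $625$). Thus $C_G(\cntr P)$ is non-abelian, and your ``$N$ non-abelian versus centraliser abelian'' dichotomy collapses for infinitely many $(n,q)$. The paper avoids this by intersecting \emph{four} line stabilisers $H_X\cap H_{X^F}\cap H_{\langle e_1\rangle}\cap H_{\langle e_1\rangle^F}$ and using instead the sharp fact $C_{H_{\langle e_1\rangle}}(P)=\cntr$ (a circulant stabilising $\langle e_1\rangle$ is scalar); Theorem~\ref{thm:unisubspaces} then supplies a non-scalar element of this quadruple intersection for $n\ge7$ or for $n=5$ with $q\ne4$, and the residual case $(5,4)$ is dispatched by computation.

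For $n=3$, your argument is avowedly incomplete (``for all but finitely many $q$'', ``remaining small $q$ handled by the computations''), and the structural claims about $N=H_X\cap H_Y$ and the eigenline configurations are not established. The paper's route is cleaner and uniform in $q$: using Lemma~\ref{lem:unithreemat}, the matrices $B_i$ (diagonal with one entry $\lambda^{-2}$ and two entries $\lambda$) satisfy $\cntr R\sim\cntr B_i$ for each $i$, any two $H$-conjugates of $B_1$ jointly stabilise a line, and an $H$-conjugate of $B_1$ commuting with all three $B_i$ must equal one of them; this yields $d\le 3$ directly.
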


\begin{proof}\leavevmode

\noindent (i) This holds since $C(x^n-1)^{-1} = C(x^n-1)^{T}$ and $\det(C(x^n-1)) = 1$.

\medskip

\noindent (ii) We may assume that $R = C(x^n-1)$. If $q$ is odd, then let $A$ be the direct sum of the matrices $I_{n-2}$ and $-I_2$, so that the $(1,1)$ and $(n,n)$ entries of $[R,A]$ are equal to $-1$ and $1$, respectively. If instead $q$ is even, then let $A \in H$ be the direct sum of the matrices $I_{n-2}$ and $\begin{pmatrix} 0 & 1\\1&0\\ \end{pmatrix}$, so that the $(1,1)$ entry of $[R,A]$ is equal to $0$. In either case, $[R,A] \notin \cntr$. Moreover, $R$ and $A$ are monomial, and so $\langle R, A, \cntr \rangle < H$. Thus $\cntr R \sim \cntr A$. As $A$ is an involution, so is $\cntr A$.

\medskip

\noindent (iii) We may assume that $R = C(x^n-1)$ and $d(\cntr R,\cntr S) > 1$. Then $R$ stabilises the one-dimensional subspace $X$ of $V$ spanned by the all-ones vector. Let $F \in H$ such that $S = R^F$, and let $L:=H_X$, so that $R \in L$ and $S \in L^F$. Additionally, let $L_0:=H_{\langle e_1 \rangle}$. It is easy to show that $C_{L_0}(R) = \cntr$, and thus $C_{L_0^F}(S) = \cntr$. Furthermore, as $\cntr \le L$ and $L < H$, it follows from Lemma~\ref{lem:inter1stab}\ref{inter1stab:p1} that if $[R,A] \ne 1$ for some $A \in L$, then $\cntr R \sim \cntr A$. Similarly, if $[S,D] \ne 1$ for some $D \in L^F$, then $\cntr S \sim \cntr D$. We split the remainder of the proof into three cases.

\medskip

\noindent \textbf{Case (a)}: $n \ge 7$, or $n = 5$ and $q \ne 4$. By Theorem~\ref{thm:unisubspaces}, the pointwise stabiliser $L \cap L^F \cap L_0 \cap L_0^F$ in $H$ of $\{X,X^F,\langle e_1 \rangle, \langle e_1 \rangle^F\}$ contains a non-scalar matrix $D$. The previous paragraph shows that $D$ centralises neither $R$ nor $S$, and in fact that $\cntr R \sim \cntr D \sim \cntr S$.

\medskip

\noindent \textbf{Case (b)}: $n = 5$ and $q = 4$. Straightforward computations in Magma (with a runtime of about 1.5 hours) show that $|L \cap L^F|$ is even (for all $F \in H$), while $|C_H(R)| = 625$ (and hence $|C_H(S)| = 625$). It follows from the second last paragraph that $\cntr R \sim \cntr D \sim \cntr S$ for each involution $D \in L \cap L^F$.

\medskip

\noindent \textbf{Case (c)}: $n = 3$. For each $i \in \{1,2,3\}$, define $B_i \in H \setminus \cntr$ as in Lemma~\ref{lem:unithreemat}. It is easy to check that $[R,B_i] \notin \cntr$, and since $\cntr$, $R$ and $B_i$ are monomial matrices, $\langle \cntr, R, B_i \rangle < H$. Thus $\cntr R \sim \cntr B_i$. Similarly, $\cntr S \sim \cntr B_i^F$. Recall also from Lemma~\ref{lem:unithreemat}\ref{unithreemat2}--\ref{unithreemat3} that $\langle B_i, B_1^F \rangle$ stabilises a one-dimensional subspace of $V$. Hence, as in the third last paragraph, if $[B_i, B_1^F] \ne 1$ for some $i$, then $\cntr B_i \sim \cntr B_1^F$, and so ${\cntr R \sim \cntr B_i \sim \cntr B_1^F \sim \cntr S}$. Otherwise, Lemma~\ref{lem:unithreemat}\ref{unithreemat4} yields $B_1^F = B_i$ for some $i$, and so $\cntr R \sim \cntr B_i \sim \cntr S$.
\end{proof}

\begin{proof}[Proof of Theorem~\ref{thm:ncuniodd}]\leavevmode

\noindent (i) Let $x,y \in G \setminus Z(G)$. We will show that $d(x,y) \le 5$. By Proposition~\ref{prop:isolvertnc}, each of $x$ and $y$ is non-central in some maximal subgroup. Using Lemma~\ref{lem:ncmaxsimple}, we may assume that each maximal subgroup $K$ of $G$ with $x \in K \setminus Z(K)$ has odd order. By \cite[Theorem 2]{liebecksaxl}, $K = N_H(S)/\cntr$, where $S$ is a Singer subgroup of $H$, i.e., the intersection of $H$ and a Singer subgroup of $\mathrm{GL}(n,q^2)$. It follows from \cite[p.~497 \& p.~512]{hestenes} that $N_H(S) = S \sd \langle C \rangle$ for some $H$-conjugate $C$ of $C(x^n-1)$.

Now, by Lemma~\ref{lem:uniphid}\ref{uniphid2}, $\cntr C \sim r$ for some involution $r \in G$, and so $\cntr C, r \in L \setminus Z(L)$ for some maximal subgroup $L$ of $G$ of even order. As $|C| = n$ is prime, each non-identity element of $\langle \cntr C \rangle$ lies in $L \setminus Z(L)$. Since $\cntr \le S$, it follows that $x = \cntr AC^i$ for some $A \in S \setminus \cntr$ and some $i \in \{0,1,\ldots,n-1\}$. Again using the fact that $n$ is prime, it follows from Clifford's Theorem that $\langle A \rangle$ acts irreducibly on $\mathbb{F}_{q^2}^n$. Similarly to Case (b) in the proof of Lemma~\ref{lem:reduciblematnc} (but now working over $\mathbb{F}_{q^2}$), we see that $[A,C] = A^{q^2-1}$. By Lemma~\ref{lem:irredmatlowdim}, $A^{q^4-1} \notin \cntr$, and so $[A,C] \notin \cntr$. Moreover, $\langle A, C, \cntr \rangle \le N_H(S) < H$. Therefore, $\cntr A \sim \cntr C$. It is now easy to see that $\cntr AC^i \sim \cntr C$ for all $i$, and in particular, $x \sim \cntr C$.

If $y \in M \setminus Z(M)$ for some maximal subgroup $M$ of $G$ of even order, then since $\cntr C \sim r$ and $|r| = 2$, Lemma~\ref{lem:ncmaxsimple} shows that $d(\cntr C,y) \le 4$. Hence $d(x,y) \le 5$. Otherwise, $y$ lies in a $G$-conjugate of $N_H(S)/\cntr$. By the previous paragraph, $y \sim \cntr D$ for some $H$-conjugate $D$ of $C$. Since $x \sim \cntr C$, and since $d(\cntr C,\cntr D) \le 3$ by Lemma~\ref{lem:uniphid}\ref{uniphid3}, we again obtain $d(x,y) \le 5$.

\medskip

\noindent (ii) By \cite[Remark 1.3]{intgraph} (see also \cite[Ch.~4]{saulthesis}), the intersection graph of $G$ has diameter $5$. Thus $\diam(\nc(G)) \ge 4$ by Lemma~\ref{lem:intncgraphs}. It remains to show that $d(x,y) \le 4$ for all $x, y \in G \setminus \{1\}$. In what follows, except where stated otherwise, all information about $G$ is determined using Magma, via computations involving conjugacy classes of elements and subgroups of $G$, with a runtime of about two minutes.

If $|x|,|y| \ne 43$, then each of $x$ and $y$ is adjacent in $\nc(G)$ to an involution, and Lemma~\ref{lem:ncmaxsimple} yields $d(x,y) \le 4$. Assume from now on that $|x| = 43$. Then $x$ lies in no maximal subgroup of $G$ of even order. As in the proof of (i), $x \sim t$, where $t$ is the image in $G$ of some $H$-conjugate of $C(x^7-1)$. Since $n > 3$, it follows from Lemma~\ref{lem:uniphid}\ref{uniphid3} that if $|y| = 43$, then $d(x,y) \le 4$.

Suppose finally that $|y| \ne 43$, and let $T:=\langle t \rangle$. Then there exist maximal subgroups $K$ and $L$ of $G$ such that $y \in K \setminus Z(K)$, $t \in L \setminus Z(L)$, and either:
\begin{enumerate}[label={(\Roman*)},font=\upshape]
\item $|K \cap L| > |C_G(t)|+|Z(K)|$; or
\item $Z(K) = 1$, $C_L(t) = T$, $T \cap K = 1$, and $|K|\,|L| > |G|$.
\end{enumerate}
In case (I), there exists an element $f \in K \cap L$ that centralises neither $t$ nor $K$. Hence ${t \sim f}$, and $d(f,y) \le 2$ by Lemma~\ref{lem:propernc}, yielding $d(t,y) \le 3$. In case (II), we observe that $K \cap L > 1$, and that $t \sim w$ for each non-identity $w \in K \cap L$. As $d(w,y) \le 2$ by Lemma~\ref{lem:propernc}, we again obtain $d(t,y) \le 3$. We conclude in general that $d(x,y) \le d(x,t)+d(t,y) \le 4$.
\end{proof}

Using Magma, we observe that the non-commuting, non-generating graphs of $\mathrm{PSU}(3,3)$, $\mathrm{PSU}(3,4)$ and $\mathrm{PSU}(4,2)$ have diameters $2$, $2$ and $3$, respectively.

\subsection{The non-generating graph of a non-abelian finite simple group}

\begin{proof}[Proof of Theorem~\ref{thm:nongensimple}]
Let $x,y \in G \setminus \{1\}$, and let $M_1$ and $M_2$ be maximal subgroups of $G$ containing $x$ and $y$, respectively. If $|M_1|$ and $|M_2|$ are even, then $\langle a, b \rangle$ is dihedral for involutions $a \in M_1$ and $b \in M_2$. Hence $(x,a,b,y)$ contains a path from $x$ to $y$ in $\nongen$, and $d(x,y) \le 3$. Thus we are done if every maximal subgroup of $G$ has even order.

Assume now that $|M_1|$ is odd. Since $\nc(G)$ is a spanning subgraph of $\nongen$, it follows from Theorems~\ref{thm:simpleoddmax} and~\ref{thm:ncsimple} that if $G$ is not unitary of odd prime dimension, then $\diam(\nongen)$ is at most $5$ if $G = \mathbb{M}$, and at most $4$ otherwise.

Suppose therefore that $G = \mathrm{PSU}(n,q)$, with $n$ an odd prime. As in the proof of Theorem~\ref{thm:ncuniodd}, $M_1$ is the image in $G$ of the subgroup $S \sd \langle C \rangle$ of $\mathrm{SU}(n,q)$, with $S$ a Singer subgroup of $\mathrm{SU}(n,q)$ and $C$ a conjugate of the companion matrix $C(x^n-1)$. By Lemma~\ref{lem:uniphid}\ref{uniphid2}, there exists an involution $a \in G$ such that the image $c$ of $C$ in $G$ is adjacent to $a$ in $\nc(G)$, and hence in $\nongen$.

Now, if $M_2$ contains an involution $b$, then $(x,c,a,b,y)$ contains a path from $x$ to $y$ in $\nongen$, and $d(x,y) \le 4$. Otherwise, by the previous paragraph, $y \sim c'$ for some $G$-conjugate $c'$ of $c$. Moreover, $c$ and $c'$ stabilise one-dimensional subspaces $X$ and $X'$, respectively, of $\mathbb{F}_{q^2}^n$. Since $q \ne 2$ when $n = 3$, we deduce from Theorem~\ref{thm:unisubspaces} that $G_X \cap G_{X'}$ contains an element $t \ne 1$. Hence $(x,c,t,c',y)$ contains a path from $x$ to $y$ in $\nongen$, and $d(x,y) \le 4$. Therefore, $\diam(\nongen) \le 4$.

It remains to determine lower bounds for $\diam(\nongen)$ when $G \in \{\mathbb{B}, \mathrm{PSU}(7,2), \mathbb{M}\}$. Here, \cite[Theorem 1.1 \& Remark 1.3]{intgraph} and Proposition~\ref{prop:intmonst} show that the intersection graph of $G$ is connected with diameter at least $5$, and so Lemma~\ref{lem:intncgraphs} yields $\diam(\nongen) \ge 4$.
\end{proof}

\subsection*{Acknowledgements}
This work was supported by the University of St Andrews (St Leonard's International Doctoral Fees Scholarship \& School of Mathematics and Statistics PhD Funding Scholarship) and EPSRC grant number EP/W522422/1. The author is grateful to Colva Roney-Dougal and Peter Cameron for helpful discussions regarding the original thesis on which this work is based, and to Colva and anonymous referees for their useful comments on this paper.

\bibliographystyle{abbrv}
\bibliography{Simplerefs}

\end{document}